\newcommand\scalemath[2]{\scalebox{#1}{\mbox{\ensuremath{\displaystyle #2}}}}
\numberwithin{equation}{section}
\setlist{leftmargin=3\parindent,labelindent=3\parindent}
\setlist[enumerate]{%
 leftmargin=3\parindent,%
 align=left,%
 labelwidth=3\parindent,%
 labelsep=0pt%
}
\setlist[enumerate,1]{%
 label={\normalfont (\thesection.\arabic{equation})}, ref={\normalfont \thesection.\arabic{equation}},
 resume%
}
\newtheorem{thm}[equation]{Theorem}
\newtheorem{cor}[equation]{Corollary}
\newtheorem{lem}[equation]{Lemma}
\newtheorem{prop}[equation]{Proposition}
\newtheorem{fact}[equation]{Fact}
\newtheorem{conj}[equation]{Conjecture}
\newtheorem{prob}[equation]{Problem}
\theoremstyle{definition}
\newtheorem{defn}[equation]{Definition}
\newtheorem{rem}[equation]{Remark}
\newtheorem{obs}[equation]{Observation}
\newtheorem*{ack}{Acknowledgements}
\newtheorem*{ai}{AI Declaration}
\theoremstyle{remark}
\title{Off-Diagonal Ramsey Multiplicity}
\author{Elena Moss\thanks{School of Computing Science, University of Glasgow, Glasgow, UK. The main research leading to this paper was completed while the first author was an undergraduate student at the University of Victoria. Research supported by an NSERC Undergraduate Student Research Award (USRA). E-mail: \texttt{e.moss.1@research.gla.ac.uk}.} \and Jonathan A. Noel\thanks{Department of Mathematics and Statistics, University of Victoria, Victoria, B.C., Canada. Research supported by NSERC Discovery Grant RGPIN-2021-02460, NSERC Early Career Supplement DGECR-2021-00024 and a Start-Up Grant from the University of Victoria. E-mail: \texttt{noelj@uvic.ca}.}}
\DeclareTextCompositeCommand{\v}{OT1}{l}{l\nobreak\hspace{-.1em}'}
\DeclareTextCompositeCommand{\v}{OT1}{t}{t\nobreak\hspace{-.1em}'\nobreak\hspace{-.15em}}
\DeclareMathOperator{\inj}{inj}
\DeclareMathOperator{\ind}{ind}
\DeclareMathOperator{\aut}{aut}
\begin{document}
\setcounter{MaxMatrixCols}{20}

\maketitle

\begin{abstract}
The Ramsey multiplicity problem asks for the minimum asymptotic density of monochromatic labelled copies of a graph $H$ in a red/blue colouring of the edges of $K_n$. We introduce an off-diagonal generalization in which the goal is to minimize a certain weighted sum of the densities of red copies of one graph and blue copies of another. We establish various properties of this new notion, including a useful ``dual formulation,'' and use these results to solve the problem for several pairs of graphs. 
\end{abstract}

\section{Introduction}

The \emph{Ramsey number} of a pair $(H_1,H_2)$ of graphs, denoted by $r(H_1,H_2)$, is the minimum integer $N$ such that every colouring of the edges of the complete graph $K_N$ with red and blue contains a red copy of $H_1$ or a blue copy of $H_2$. The existence of $r(H_1,H_2)$ is implied by a famous theorem of Ramsey from 1929~\cite{Ramsey29}. Determining exact values of Ramsey numbers, even for small graphs, is notoriously difficult; e.g. $r(K_5,K_5)$ is still unknown. For a list of bounds on small Ramsey numbers, see~\cite{Radziszowski94} and, for a survey of asymptotic results, see~\cite{ConlonFoxSudakov15}. 

A natural quantitative extension of this problem is to ask, given $N\gg r(H_1,H_2)$, at least how many red copies of $H_1$ and blue copies of $H_2$ must appear in a red/blue colouring of the edges of $K_N$? To make this question meaningful, we must explain how ``copies'' are counted, and specify the relative weighting on copies of $H_1$ or $H_2$. A \emph{homomorphism} from a graph $H$ to a graph $G$ is a map $f:V(H)\to V(G)$ such that $f(u)f(v)$ is an edge of $G$ whenever $uv$ is an edge of $H$. Let $\hom(H,G)$ be the number of homomorphisms from $H$ to $G$. The \emph{homomorphism density} $t(H,G)$ of $H$ in $G$ is the probability that a random function from $V(H)$ to $V(G)$ is a homomorphism; that is,
\[t(H,G):=\frac{\hom(H,G)}{v(G)^{v(H)}}\]
where $v(F):=|V(F)|$ for every graph $F$. Since there are only $O\left(v(G)^{v(H)-1}\right)$ non-injective functions from $V(H)$ to $V(G)$, the homomorphism density is a good proxy for the number of labelled copies of $H$ in $G$ when $G$ is a large dense graph. The \emph{Ramsey multiplicity constant} (see, e.g.,~\cite{Fox08,Conlon12,GrzesikLeeLidickyVolec22,FoxWigderson23}) of $H$ is defined as follows:
\[c(H) := \liminf_{n\to\infty}\left[t(H,G_n)+ t(H,\overline{G_n})\right]\]
where $G_1,G_2,\dots$ is a sequence containing every finite graph exactly once. By thinking of the edges of $G_n$ and $\overline{G_n}$ as being red and blue, respectively, we can view $c(H)$ as the limit as $n$ tends to infinity of the minimum proportion of monochromatic labelled copies of $H$ over all red/blue colourings of the edges of $K_n$; it is a nice exercise to show that the limit exists. 

Computing $c(H)$ is a difficult problem in general. For example, despite receiving a fair amount of attention~\cite{EvenZoharLinial15,ParczykPokuttaSpiegelSzabo25,GrzesikLeeLidickyVolec22,Niess12,Sperfeld11,Thomason89,Thomason97}, the value of $c(K_4)$ is still unknown. A graph $H$ is said to be \emph{common} if $c(H)=2^{1-e(H)}$, where $e(H):=|E(H)|$. In other words, $H$ is common if its Ramsey multiplicity constant is achieved by a sequence of uniformly random colourings. There are now many families of graphs that are known to be common~\cite{Goodman59,Thomason89,Sidorenko89,Thomason97,JaggerStovicekThomason96,KralVolecWei25,Kral+22,BehagueMorrisonNoel24,GrzesikLeeLidickyVolec22,KoLee23,Hatami+12,LeeNoel26}, but the problem of classifying such graphs is wide open. Fox and Wigderson~\cite{FoxWigderson23} determined $c(H)$ for a wide range of uncommon graphs $H$; prior to this result, there were no uncommon graphs $H$ for which $c(H)$ was known. 

In this paper, we investigate an ``off-diagonal'' extension of Ramsey multiplicity problems to pairs $(H_1,H_2)$ of graphs. Roughly speaking, the goal is to minimize a weighted sum of the homomorphism densities of $H_1$ and $H_2$ in a large graph $G$ and its complement, respectively. The weighting will be chosen in such a way that this minimum density is as large as possible (see Definition~\ref{defn:balancedRM}). This choice is motivated by a desire to establish a ``balance'' between red copies of $H_1$ and blue copies of $H_2$; a nice consequence of this choice is a natural dual formulation of the problem, which we shall discuss in Section~\ref{sec:dual}.

\begin{defn}
\label{defn:lambdaRM}
Given graphs $H_1$ and $H_2$ and $\lambda\in [0,2]$, define the \emph{$\lambda$-Ramsey multiplicity constant} of $(H_1,H_2)$ to be 
\[c_\lambda(H_1,H_2):=\liminf_{n\to\infty}\left[\lambda \cdot t(H_1,G_n)+ (2-\lambda)\cdot t(H_2,\overline{G_n})\right]\]
where $G_1,G_2,\dots$ is a sequence containing every finite graph exactly once.\footnote{An alternative definition of $c_\lambda(H_1,H_2)$ will be provided in Section~\ref{sec:limits} in the language of graph limits.}
\end{defn}

\begin{rem}
\label{rem:swap}
Clearly, $c_\lambda(H_1,H_2)=c_{2-\lambda}(H_2,H_1)$ for any graphs $H_1,H_2$ and $\lambda\in[0,2]$.
\end{rem}

\begin{rem}
\label{rem:c_1(H,H)}
By definition, $c(H)=c_1(H,H)$ for any graph $H$.
\end{rem}

Note that $0\leq t(H,G)\leq1$ for every pair of graphs $H$ and $G$; thus, the following statement holds trivially. 

\begin{obs}
\label{obs:02}
$0\leq c_\lambda(H_1,H_2)\leq2$ for any two graphs $H_1$ and $H_2$ and $\lambda\in[0,2]$. 
\end{obs}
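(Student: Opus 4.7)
The plan is to prove Observation~\ref{obs:02} by a direct application of the trivial bound $0 \le t(H,G) \le 1$, which holds for any graphs $H$ and $G$ since $t(H,G)$ is defined as a probability. Everything follows by elementary arithmetic on the quantity inside the liminf, together with the monotonicity of liminf with respect to pointwise inequalities on the sequence.

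First I would observe that, by hypothesis, $\lambda \in [0,2]$ and so both $\lambda \ge 0$ and $2-\lambda \ge 0$. Combined with $t(H_1,G_n) \ge 0$ and $t(H_2,\overline{G_n}) \ge 0$, this gives
\[
\lambda \cdot t(H_1,G_n) + (2-\lambda) \cdot t(H_2,\overline{G_n}) \ge 0
\]
for every $n$. Taking the liminf yields $c_\lambda(H_1,H_2) \ge 0$.

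Next I would use the upper bound $t(H,G) \le 1$ together with the fact that $\lambda,2-\lambda \ge 0$ to conclude
\[
\lambda \cdot t(H_1,G_n) + (2-\lambda) \cdot t(H_2,\overline{G_n}) \le \lambda + (2-\lambda) = 2
\]
for every $n$, and again taking the liminf gives $c_\lambda(H_1,H_2) \le 2$.

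There is essentially no obstacle here; the statement is called an Observation precisely because the bound on $t(\cdot,\cdot)$ combined with $\lambda, 2-\lambda \ge 0$ does all the work. The only thing to be careful about is to avoid negative coefficients, which is exactly what the hypothesis $\lambda \in [0,2]$ rules out.
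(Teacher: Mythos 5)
Your argument is correct and is exactly the one the paper intends: the paper simply notes that $0\le t(H,G)\le 1$ and declares the observation trivial, which amounts to the same pointwise bound on the weighted sum followed by taking the liminf. Nothing further is needed.
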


Definition~\ref{defn:lambdaRM}, specialized to the case that $H_1$ and $H_2$ are complete graphs, appeared in a recent paper of Parczyk, Pokutta, Spiegel and Szab\'o~\cite[Equation~(15)]{ParczykPokuttaSpiegelSzabo25}. The results of~\cite{ParczykPokuttaSpiegelSzabo25} include the following:
\[c_1(K_3,K_4)=\frac{689}{3^8}\text{ and }c_1(K_3,K_5)=\frac{24011}{3^{12}}.\]
In both of these cases, the tight examples are colourings based on blow-ups of the $27$-vertex Schl\"afli graph or its complement. Also, in~\cite[Section~4.3]{ParczykPokuttaSpiegelSzabo25}, they mention that the tight colouring for $c_1(K_3,K_4)$ also provides a tight bound on $c_{1-\epsilon}(K_3,K_4)$ for $\epsilon=10^{-4}$.

Outside of a few inherently natural choices for $\lambda$ (e.g. $\lambda=1$), it can be hard to argue that any particular $\lambda$ provides the ``correct'' weighting on copies of $H_1$ and $H_2$. The following definition is an attempt to find a ``natural'' choice of $\lambda$ by taking the supremum of $c_\lambda(H_1,H_2)$ over all possible $\lambda$. This new notion is the central focus of this paper.

\begin{defn}
\label{defn:balancedRM}
Given graphs $H_1$ and $H_2$, define the \emph{balanced Ramsey multiplicity constant} of $(H_1,H_2)$ to be
\[c(H_1,H_2):=\sup_{\lambda\in[0,2]}c_\lambda(H_1,H_2).\]
\end{defn}

The following is a simple consequence of Remark~\ref{rem:swap}. 

\begin{fact}
\label{fact:ch1h2isch2h1}
For any graphs $H_1$ and $H_2$, $c(H_1, H_2) = c(H_2, H_1)$.
\end{fact}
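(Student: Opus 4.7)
The plan is to derive Fact~\ref{fact:ch1h2isch2h1} directly from Remark~\ref{rem:swap} by performing a change of variables inside the supremum defining the balanced Ramsey multiplicity constant. Unfolding Definition~\ref{defn:balancedRM} and applying Remark~\ref{rem:swap} to each summand of the supremum, I would first write
\[
c(H_1,H_2)=\sup_{\lambda\in[0,2]}c_\lambda(H_1,H_2)=\sup_{\lambda\in[0,2]}c_{2-\lambda}(H_2,H_1).
\]
Then, substituting $\mu=2-\lambda$ and observing that the map $\lambda\mapsto 2-\lambda$ is a bijection from $[0,2]$ to itself, the right-hand side equals $\sup_{\mu\in[0,2]}c_\mu(H_2,H_1)=c(H_2,H_1)$, which completes the argument.

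The only step that requires justification beyond cited results is the change of variables, which is just the elementary fact that $\sup_{x\in S}f(\varphi(x))=\sup_{y\in S}f(y)$ for any bijection $\varphi\colon S\to S$. Since Remark~\ref{rem:swap} already packages all the real content (namely, that swapping $H_1$ and $H_2$ corresponds to swapping the roles of $G_n$ and $\overline{G_n}$, equivalently to reflecting $\lambda$ about $1$), there is no genuine obstacle here: the fact is essentially a one-line corollary of the symmetry already recorded in Remark~\ref{rem:swap}.
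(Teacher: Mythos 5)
Your argument is correct and is exactly the one the paper has in mind: the paper labels this a "simple consequence of Remark~\ref{rem:swap}" without spelling out the change of variables, and your one-line substitution $\mu = 2-\lambda$ in the supremum is precisely how one makes that consequence explicit.
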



In this paper, we aim to establish some basic properties of the balanced Ramsey multiplicity constant, elucidate connections between it and well-studied problems in extremal graph theory and exactly determine $c(H_1,H_2)$ for several explicit pairs of graphs. The following theorem says that balanced Ramsey multiplicity generalizes usual Ramsey multiplicity, just as one would hope.

\begin{thm}
\label{th:c(H,H)=c(H)}
For any graph $H$, 
\[c(H,H)=c(H).\] 
\end{thm}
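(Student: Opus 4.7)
The plan is to prove the two inequalities $c(H,H)\ge c(H)$ and $c(H,H)\le c(H)$ separately. The first is essentially free: by Definition~\ref{defn:balancedRM} and Remark~\ref{rem:c_1(H,H)},
\[c(H,H)=\sup_{\lambda\in[0,2]}c_\lambda(H,H)\ge c_1(H,H)=c(H).\]
So all of the work is in the reverse inequality, which I would prove by showing that $c_\lambda(H,H)\le c(H)$ holds for every $\lambda\in[0,2]$ and then taking the supremum.

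The key observation driving the second inequality is the algebraic identity
\[\bigl(\lambda a+(2-\lambda)b\bigr)+\bigl(\lambda b+(2-\lambda)a\bigr)=2(a+b),\]
valid for all real numbers $a,b$ and all $\lambda$. In particular, at least one of the two quantities on the left is $\le a+b$. I plan to apply this with $a=t(H,G_n)$ and $b=t(H,\overline{G_n})$, exploiting the fact that swapping $a$ and $b$ corresponds to replacing $G_n$ by $\overline{G_n}$.

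Fix $\lambda\in[0,2]$ and $\varepsilon>0$, and let $G_1,G_2,\dots$ be the enumeration of all finite graphs used in the definitions. By the definition of $c(H)=c_1(H,H)$ as a $\liminf$, there are infinitely many indices $n$ for which $t(H,G_n)+t(H,\overline{G_n})\le c(H)+\varepsilon$. For each such $n$ write $a_n=t(H,G_n)$, $b_n=t(H,\overline{G_n})$; the identity above implies that at least one of $\lambda a_n+(2-\lambda)b_n$ or $\lambda b_n+(2-\lambda)a_n$ is at most $a_n+b_n\le c(H)+\varepsilon$. Since the sequence $(G_k)$ contains every finite graph exactly once, $\overline{G_n}=G_{m(n)}$ for some index $m(n)$, and then $\lambda b_n+(2-\lambda)a_n$ is precisely the $m(n)$-th term of the sequence defining $c_\lambda(H,H)$. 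A short argument using that $n\mapsto m(n)$ is an injection then shows that infinitely many terms of the sequence $\lambda t(H,G_k)+(2-\lambda)t(H,\overline{G_k})$ are $\le c(H)+\varepsilon$, so $c_\lambda(H,H)\le c(H)+\varepsilon$. Letting $\varepsilon\to 0$ and taking the supremum over $\lambda$ yields $c(H,H)\le c(H)$.

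I do not foresee a genuine obstacle here; the only mild annoyance is the bookkeeping with the enumeration $(G_n)$ and the index $m(n)$, which could be streamlined by appealing to the graphon reformulation promised in Section~\ref{sec:limits}. In that language, both $c_\lambda$ and $c$ become infima of continuous functionals over the compact space of graphons, and the averaging identity applied to any near-minimizer $W$ (together with its complement $\overline{W}$) gives the inequality directly without the need to track indices.
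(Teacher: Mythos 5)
Your proof is correct, and the reverse inequality is obtained by a genuinely different route from the paper. The paper takes a $1$-optimal graphon $W$ (so $t(H,W)+t(H,1-W)=c(H)$), assumes WLOG $t(H,W)\geq t(H,1-W)$, observes that $(W,1-W)$ is then a $c(H)$-certificate in the sense of Definition~\ref{defn:certificate} with $\lambda=1$, and concludes via Theorem~\ref{th:dual}. You instead bound $c_\lambda(H,H)\leq c(H)$ directly for every $\lambda$ using the averaging identity $\bigl(\lambda a+(2-\lambda)b\bigr)+\bigl(\lambda b+(2-\lambda)a\bigr)=2(a+b)$, exploiting that passing from $G_n$ to $\overline{G_n}$ swaps $a$ and $b$. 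Both proofs rest on the same underlying symmetry (a graphon paired with its complement), but yours bypasses the duality machinery of Section~\ref{sec:dual} entirely, which makes it more elementary and self-contained; the paper's version is shorter once Theorem~\ref{th:dual} is in hand and illustrates the certificate framework that is used repeatedly later. One remark on your sequence-level argument: you should note explicitly that the map $n\mapsto\iota(n)\in\{n,m(n)\}$ has fibers of size at most two (since $m$ is an involution on indices), so its image over the infinitely many good $n$ is infinite; your suggested graphon reformulation via Lemma~\ref{lem:altDef} avoids this bookkeeping altogether and is the cleaner way to finish, since then one applies the averaging identity to the pair $(W,1-W)$ where $W$ is a $1$-optimal graphon, exactly as the paper does, but without detouring through Theorem~\ref{th:dual}.
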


A well-known conjecture of Sidorenko~\cite[Conjecture~1]{Sidorenko93}, now known to be equivalent to an earlier conjecture of Erd\H{o}s and Simonovits~\cite[Conjecture~2]{ErdosSimonovits84}, says that, if $H$ is bipartite, then $t(H,G)\geq t(K_2,G)^{e(H)}$ for every graph $G$. A graph $H$ with this property is said to be \emph{Sidorenko}. For background on Sidorenko's Conjecture, see the recent paper~\cite{ConlonLee21} and the references therein. We prove the following theorem, which determines the balanced Ramsey multiplicity constant of every pair of Sidorenko graphs. This will be derived as a corollary of a more general result (Theorem~\ref{th:commonPair}), stated and proven in Section~\ref{sec:applications}. A graph $H$ is said to be \emph{empty} if its edge set is empty.

\begin{thm}
\label{th:Sidorenko}
Let $H_1$ and $H_2$ be non-empty graphs and let $p\in(0,1)$ be such that $p^{e(H_1)}=(1-p)^{e(H_2)}$. If $H_1$ and $H_2$ are Sidorenko, then
\[c(H_1,H_2)=2\cdot p^{e(H_1)}=2\cdot (1-p)^{e(H_2)}.\]
\end{thm}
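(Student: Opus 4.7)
The plan is to establish the upper and lower bounds separately, with the choice of $\lambda$ guided by a one-variable convex optimization. Write $c := p^{e(H_1)} = (1-p)^{e(H_2)}$, so that the target equality becomes $c(H_1,H_2) = 2c$. Note that such a $p \in (0,1)$ exists and is unique because $p \mapsto p^{e(H_1)}$ and $p \mapsto (1-p)^{e(H_2)}$ are continuous and strictly monotone in opposite directions, with matching endpoint values.

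For the upper bound $c(H_1,H_2) \le 2c$, I would use a sequence of quasirandom graphs of edge density tending to $p$ (instances of which appear infinitely often in the enumeration $G_1, G_2, \ldots$). Such graphs satisfy $t(H_1, G_n) \to p^{e(H_1)} = c$ and $t(H_2, \overline{G_n}) \to (1-p)^{e(H_2)} = c$, so
\[\lambda \cdot t(H_1, G_n) + (2-\lambda)\cdot t(H_2, \overline{G_n}) \longrightarrow \lambda c + (2-\lambda) c = 2c\]
for every $\lambda \in [0,2]$. Hence $c_\lambda(H_1,H_2) \le 2c$ for every $\lambda$, and therefore $c(H_1,H_2) \le 2c$.

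For the lower bound, I would select the specific value
\[\lambda^* := \frac{2\, e(H_2)(1-p)^{e(H_2)-1}}{e(H_1)\, p^{e(H_1)-1} + e(H_2)(1-p)^{e(H_2)-1}},\]
which lies in $(0,2)$ since the numerator is strictly positive and strictly less than the denominator (the difference being $2 e(H_1) p^{e(H_1)-1} > 0$). For any graph $G$ with edge density $d := t(K_2,G)$, the Sidorenko hypothesis applied to $H_1$ in $G$ and to $H_2$ in $\overline{G}$ yields $t(H_1,G) \ge d^{e(H_1)}$ and $t(H_2,\overline{G}) \ge (1-d)^{e(H_2)}$, so
\[\lambda^* \, t(H_1,G) + (2-\lambda^*)\, t(H_2,\overline{G}) \;\ge\; f(d) := \lambda^* d^{e(H_1)} + (2-\lambda^*)(1-d)^{e(H_2)}.\]
Since $e(H_1), e(H_2) \ge 1$, the functions $d \mapsto d^{e(H_1)}$ and $d \mapsto (1-d)^{e(H_2)}$ are convex on $[0,1]$, so $f$ is convex; and $\lambda^*$ was chosen precisely to force $f'(p) = 0$, so the minimum of $f$ on $[0,1]$ is attained at $d = p$ with value $f(p) = \lambda^* c + (2-\lambda^*) c = 2c$. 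Therefore $c_{\lambda^*}(H_1,H_2) \ge 2c$, which combined with the upper bound yields $c(H_1,H_2) = 2c$.

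The heart of the argument is the calibration of $\lambda^*$ so that the Sidorenko lower bound is tight at exactly the density $p$ forced by the equation $p^{e(H_1)} = (1-p)^{e(H_2)}$; once this is set up correctly, convexity does the rest. The only remaining technicality is passing from the sequence-based $\liminf$ in Definition~\ref{defn:lambdaRM} to statements about arbitrary graphs/graphon limits, which is standard and handled by the alternative graphon formulation promised in the footnote there.
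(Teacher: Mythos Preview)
Your proof is correct and follows essentially the same route as the paper: your $\lambda^*$ is exactly the $\lambda_0$ defined in \eqref{eq:commonlambda}, your upper bound via the density-$p$ (quasi)random construction is the paper's constant graphon $W=p$, and your convexity argument for the lower bound is equivalent to the Bernoulli-inequality step in Theorem~\ref{th:SidCommon}. The only difference is packaging: the paper factors the argument through the intermediate notion of $(p,1-p)$-common pairs (Theorems~\ref{th:commonPair} and~\ref{th:SidCommon}), whereas you go directly from Sidorenko to the conclusion without introducing that definition.
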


In other words, Theorem~\ref{th:Sidorenko} says that, if $H_1$ and $H_2$ are Sidorenko, then the optimal construction for $c(H_1,H_2)$ is to colour each edge of $K_n$ red with probability $p$ and blue with probability $1-p$, independently of all other edges, where $p$ is chosen so that $p^{e(H_1)}=(1-p)^{e(H_2)}$. The next theorem shows that random colourings can also be optimal when one of the graphs is not bipartite (and therefore not Sidorenko). For $k\geq3$, let $C_k$ denote the cycle of length $k$. The \emph{banner graph} $B$ is the graph obtained from $C_4$ by adding a pendant edge. See Figure~\ref{fig:diamondMothBanner}.

\begin{thm}
\label{th:C5,B}
$c(C_5,B)=c_{1}(C_5,B)=1/16$.
\end{thm}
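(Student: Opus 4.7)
The upper bound $c(C_5, B) \leq 1/16$ is straightforward. Take $G_n \sim G(n, 1/2)$; since $e(C_5) = e(B) = 5$, the densities $t(C_5, G_n)$ and $t(B, \overline{G_n})$ both converge almost surely to $2^{-5} = 1/32$. Hence for every $\lambda \in [0, 2]$,
\[\lambda \, t(C_5, G_n) + (2-\lambda)\, t(B, \overline{G_n}) \;\to\; \frac{\lambda}{32} + \frac{2-\lambda}{32} = \frac{1}{16},\]
so $c_\lambda(C_5, B) \leq 1/16$ for every $\lambda$; in particular $c(C_5, B) = \sup_\lambda c_\lambda(C_5, B) \leq 1/16$. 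Because $c(C_5, B) \geq c_1(C_5, B)$ by definition, the theorem reduces to the lower bound $c_1(C_5, B) \geq 1/16$, i.e.\ to showing $t(C_5, G) + t(B, \overline{G}) \geq 1/16 - o(1)$ for every $n$-vertex graph $G$.

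For the lower bound, my first approach would combine two structural facts. First, the banner $B$ is bipartite and can be built from $K_2$ by gluing along a vertex and attaching a pendant, so the standard preservation lemmas (see~\cite{ConlonLee21}) show that $B$ is a Sidorenko graph, giving $t(B, \overline{G}) \geq (1-p)^5$ with $p = t(K_2, G)$. Second, $C_5$ is a common graph, yielding $t(C_5, G) + t(C_5, \overline{G}) \geq 1/16 - o(1)$. However, these do not combine cleanly: for $G = K_{n/2, n/2}$ one has $p = 1/2$, $t(C_5, G) = 0$, and the Sidorenko bound $(1-p)^5 = 1/32$ is only half of the true value $t(B, \overline{G}) \to 1/16$. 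Thus a \emph{strengthening} of Sidorenko for $B$ - one which picks up an extra term whenever $G$ is close to bipartite - is required.

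The natural route to such a strengthening is via flag algebras over $2$-edge-coloured complete graphs, seeking a Cauchy--Schwarz / sum-of-squares certificate for $t(C_5, G) + t(B, \overline{G}) - 1/16 \geq 0$ modulo lower-order terms. The $5$-vertex $2$-coloured flags the paper already catalogues in its figures suggest that the certificate lives at the level of $5$- or $6$-vertex flags, well within the reach of SDP solvers. The main obstacle is that there are \emph{two} distinct extremal configurations - the random $p = 1/2$ colouring and the bipartite configuration $G = K_{n/2, n/2}$ - so any SOS certificate must be tight on both simultaneously, which severely constrains the feasible decomposition and typically forces high numerical precision together with a careful rational rounding step. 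Before resorting to SDP, I would also try a direct decomposition: write $t(B, \overline{G}) = \mathbb{E}_v[(1 - d_G(v)/n)\, t(C_4^v, \overline{G})]$ with $C_4^v$ denoting $C_4$ rooted at $v$, apply Cauchy--Schwarz in $v$, and attempt to balance the resulting terms against $t(C_5, G)$ using the commonness of $C_5$.
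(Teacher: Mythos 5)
Your upper bound is correct, and in fact slightly more direct than the paper's route: you observe that the constant graphon $W \equiv 1/2$ makes $\lambda\, t(C_5,W) + (2-\lambda)\, t(B,1-W) = 1/16$ for every $\lambda$, whereas the paper reaches the same conclusion via Theorem~\ref{th:commonPair} and the dual-certificate machinery. The content is the same: since $e(C_5) = e(B) = 5$, the uniformly random colouring is an upper-bound witness for all $\lambda$ simultaneously.

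The lower bound, however, is not actually established by your proposal. You correctly diagnose why the cheap tools fail (Sidorenko for $B$ plus commonness of $C_5$ do not combine, and you correctly identify the two competing extremal structures, the uniform random colouring and $K_{n/2,n/2}$, on which any proof of $t(C_5,W) + t(B,1-W) \geq 1/16$ must be simultaneously tight). You then state that one would resort to a flag-algebra sum-of-squares certificate, which is indeed exactly what the paper does: it applies Lemma~\ref{lem:flag} at the level of $5$-vertex graphs with $m=4$ families of flags (each rooted on $3$ vertices, of order $4$) and exhibits four explicit $8 \times 8$ PSD matrices $A_1,\dots,A_4$ for which the resulting linear functional equals $1/16$ on every $5$-vertex graph $J$. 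Recognizing that such a certificate exists and would resolve the problem is a reasonable plan, but it is not a proof; without producing the PSD matrices and verifying PSDness together with the per-graph inequalities, the key direction $c_1(C_5,B) \geq 1/16$ remains open in your write-up. Likewise, your alternative Cauchy--Schwarz sketch via a $C_4$-rooted decomposition of $t(B,\overline G)$ is left entirely unexecuted, and it is not at all clear that it can be balanced against $t(C_5,G)$ so as to be tight on both extremal configurations.
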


In contrast, the next two results concern situations in which the optimal colourings are highly structured. 

\begin{thm}
\label{th:K3,C5}
$c(K_3,C_5)=c_{10/17}(K_3,C_5)=3/34$.
\end{thm}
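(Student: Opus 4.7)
The plan is to prove $c_{10/17}(K_3, C_5) = 3/34$ via matching upper and lower bounds, then deduce $c(K_3, C_5) = 3/34$ by showing the supremum over $\lambda\in[0,2]$ is attained at $\lambda = 10/17$. For the upper bound at $\lambda = 10/17$, take $G = K_{n/2, n/2}$: it is triangle-free, so $t(K_3, G) = 0$, while its complement is $2K_{n/2}$ and the identity $\hom(C_5, K_m) = (m-1)^5 - (m-1)$ gives $t(C_5, \overline G) \to 2(1/2)^5 = 1/16$. Hence the functional limit is $(10/17)\cdot 0 + (24/17)\cdot (1/16) = 3/34$. The weight $\lambda = 10/17$ is not arbitrary: computing the first-order effect of growing a clique of fractional size $\beta$ inside one part of $K_{n/2,n/2}$ gives $t(K_3, \cdot) \sim \tfrac{3}{2}\beta^2$ and $t(C_5, \overline{\cdot}) \sim \tfrac{1}{16} - \tfrac{5}{8}\beta^2$. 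The combination is stationary at this perturbation precisely when $\lambda/(2-\lambda) = (5/8)/(3/2) = 5/12$, i.e.\ $\lambda = 10/17$, which is strong circumstantial evidence that the bipartite graphon is a global minimizer.

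The matching lower bound
\[
10\,t(K_3, W) \;+\; 24\,t(C_5, 1-W) \;\geq\; \tfrac{3}{2}\qquad \text{for every graphon } W
\]
is the central difficulty. Because $C_5$ is not Sidorenko, Jensen-type arguments do not apply directly, and Goodman-style identities control only the triangle density. My plan is to obtain this inequality via a flag-algebra (sum-of-squares) certificate: expand $t(C_5, 1-W)$ using the inclusion--exclusion $t(C_5, 1-W) = \sum_{F \subseteq C_5}(-1)^{e(F)}\,t(F, W)$ together with the trace identity $\hom(C_5, H) = \operatorname{tr}(A_H^{5})$, and exhibit a positive semidefinite matrix indexed by flags of type at most three whose quadratic form reproduces $20\,t(K_3, W) + 48\,t(C_5, 1-W) - 3$ modulo a manifestly non-negative remainder. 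The rationality of the constants and the uniqueness of the extremal graphon (which should appear as the equality case in the certificate) strongly suggest that a small, symmetric, and verifiable PSD witness exists.

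To upgrade the equality at $\lambda = 10/17$ to the supremum bound $c(K_3, C_5) \leq 3/34$, I must check $c_\lambda \leq 3/34$ for every $\lambda\in[0,2]$. The bipartite construction $K_{n/2,n/2}$ handles $\lambda\in[10/17,2]$ since it gives $(2-\lambda)/16\leq 3/34$ there, and the two-cliques construction $2K_{n/2}$ handles $\lambda\in[0,6/17]$ since it gives $\lambda/4\leq 3/34$. For the remaining interval $\lambda\in(6/17, 10/17)$ I would interpolate between these two extremes -- most naturally by a symmetric ``bipartite with a clique of fractional size $\beta$ inside each part'' family -- and, for each $\lambda$, choose $\beta = \beta(\lambda)$ so that the resulting point $(t_1, t_2)$ lies on or below the line $\lambda t_1 + (2-\lambda) t_2 = 3/34$. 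Because the linear approximation near $\beta = 0$ is exactly tangent to this line at $\lambda = 10/17$, the delicate point is to verify that the higher-order terms along the family go the right way in the entire intermediate window; this is a direct though somewhat involved real-variable calculation. The principal analytic hurdle of the entire proof, however, is the flag-algebraic inequality in the second paragraph.
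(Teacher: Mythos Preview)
Your outline matches the paper's two-part strategy (flag-algebra lower bound at $\lambda=10/17$, constructions for the upper bound), but the interpolation step contains a real gap.

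Your perturbation analysis stops at order $\beta^2$. Carrying it one step further, with one clique of size $\beta$ inside one part of $K_{n/2,n/2}$ one has exactly
\[
t(K_3,W_\beta)=\tfrac32\beta^2+\beta^3,\qquad
t(C_5,1-W_\beta)=\tfrac1{16}-\tfrac58\beta^2+\tfrac54\beta^3-\beta^5,
\]
so
\[
\lambda\,t(K_3,W_\beta)+(2-\lambda)\,t(C_5,1-W_\beta)-\tfrac{3}{34}
=\frac{10-17\lambda}{272}+\frac{17\lambda-10}{8}\beta^2+\frac{10-\lambda}{4}\beta^3-(2-\lambda)\beta^5.
\]
The $\beta^3$ coefficient $(10-\lambda)/4$ is strictly positive on $[0,2]$, and a short check shows the right-hand side is \emph{positive} for every $\beta\in(0,1/2]$ and every $\lambda\in(6/17,10/17]$. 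Thus your ``bipartite plus clique'' family never attains $3/34$ in the intermediate window; the tangency you detect at second order is a red herring, because the third-order term pushes the wrong way. (The family also does not connect to $2K_{n/2}$ at the other end, as $\beta=1/2$ yields $K_n$.)

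The paper closes this gap with a different construction: the single graphon $W_{\overline{C_6}}$ has $t(K_3,W)=1/18$ and $t(C_5,1-W)=17/432$, computed from the eigenvalues $3,-2,-2,1,0,0$ of $\overline{C_6}$. Since $1/18>17/432$ and
\[
\tfrac{10}{17}\cdot\tfrac{1}{18}+\tfrac{24}{17}\cdot\tfrac{17}{432}=\tfrac{3}{34},
\]
the functional $\lambda\cdot\tfrac1{18}+(2-\lambda)\cdot\tfrac{17}{432}$ is increasing in $\lambda$ and therefore $\le 3/34$ for \emph{all} $\lambda\le 10/17$. Paired with your $K_{n/2,n/2}$ for $\lambda\ge 10/17$, this forms what the paper calls a $3/34$-certificate, yielding $c(K_3,C_5)\le 3/34$ in one stroke via the duality theorem. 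For the lower bound, the paper supplies an explicit $6\times 6$ PSD matrix over the six $1$-rooted $3$-vertex flags, evaluated on $5$-vertex graphs; so your plan there is right, but the certificate turns out to be quite small.
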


Let $D$ be the graph obtained from $K_4$ by deleting an edge, which is referred to as the \emph{diamond graph}. Let $M$ be the graph obtained from two disjoint triangles by adding one edge; we call $M$ the \emph{moth graph}.\footnote{The name ``moth graph'' is inspired by the fact that the graph obtained by gluing two triangles on a vertex is often called the \emph{butterfly graph}.} See Figure~\ref{fig:diamondMothBanner}.

\begin{thm}
\label{th:D,M}
$c(D,M)=c_{5/6}(D,M)=1/36$. 
\end{thm}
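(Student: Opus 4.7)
The plan is to prove $c_{5/6}(D,M) = 1/36$ by matching upper and lower bounds; the full theorem then follows because the upper-bound construction gives $c_\lambda(D,M) \le 1/36$ uniformly in $\lambda$, while $c(D,M) \ge c_{5/6}(D,M)$ by definition.

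For the upper bound I would exhibit an explicit graphon $W^{\ast}$ (equivalently, a convergent sequence of graphs) realising $t(D, W^{\ast}) = t(M, \overline{W^{\ast}}) = 1/72$; such a $W^{\ast}$ automatically yields
\[
\lambda\, t(D, W^{\ast}) + (2-\lambda)\, t(M, \overline{W^{\ast}}) \;=\; \tfrac{2}{72} \;=\; \tfrac{1}{36}
\]
for every $\lambda \in [0,2]$ at once, thereby bounding each $c_\lambda(D,M)$ by $1/36$. The standard ``boundary'' constructions -- two disjoint cliques (yielding $t(D,W)=1/8$, $t(M, \overline{W})=0$), $K_{n/2,n/2}$ (yielding $t(D,W)=0$, $t(M,\overline{W})=1/32$), and uniformly random colourings -- do not achieve this balance, so $W^{\ast}$ must be a more refined step graphon on three or more parts, or a balanced blow-up of some small graph. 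I would locate it by solving the stationarity equations for $\min_{W} \max\{t(D, W),\, t(M, \overline{W})\}$ within a suitable low-dimensional family.

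For the lower bound the task is to show $\tfrac{5}{6}\, t(D, W) + \tfrac{7}{6}\, t(M, \overline{W}) \ge \tfrac{1}{36}$ for every graphon $W$. Since $D$ is a pair of triangles sharing an edge and $M$ is a pair of triangles joined by an edge, one has the identities
\[
\hom(D, G) \;=\; 2 \sum_{uv \in E(G)} \operatorname{codeg}_G(u,v)^{2}
\quad\text{and}\quad
\hom(M, G) \;=\; 8 \sum_{uv \in E(G)} T_G(u)\, T_G(v),
\]
where $\operatorname{codeg}_G$ and $T_G(u)$ denote the number of common neighbours of a pair and the number of triangles through a vertex. Cauchy--Schwarz applied to the first edge-indexed sum yields the classical $t(D, W) \ge t(K_3, W)^{2}/t(K_2, W)$, and an analogous edge-indexed Cauchy--Schwarz bounds $t(M, \overline{W})$ below by an expression in $t(K_3, \overline{W})$ and $t(K_2, \overline{W})$. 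Combined with Goodman's inequality $t(K_3, W) + t(K_3, \overline{W}) \ge 1/4$ and the identity $t(K_2, W) + t(K_2, \overline{W}) = 1$, these reduce the problem to a finite-dimensional optimisation in the basic edge and triangle densities, whose minimum one then verifies to equal $1/36$ exactly at the candidate $W^{\ast}$.

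The main obstacle is that plain Cauchy--Schwarz combined with Goodman is usually not tight: to recover the exact value $1/36$ one will very likely need to promote the argument to a flag-algebra-style sum-of-squares certificate, writing $5\,t(D, W) + 7\,t(M, \overline{W}) - \tfrac{1}{6}$ as a non-negative combination of squares of graph-density polynomials that all vanish on $W^{\ast}$. This is the same general scheme expected to underpin Theorems~\ref{th:C5,B} and~\ref{th:K3,C5}, and the clean rationality of $5/6$ and $1/36$ strongly suggests that such a certificate is attainable. Once both directions are in hand, the theorem follows from the chain $c_{5/6}(D,M) \le c(D,M) \le 1/36 \le c_{5/6}(D,M)$.
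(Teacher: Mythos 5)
Your high-level plan matches the paper exactly: exhibit a single graphon $W^{\ast}$ with $t(D,W^{\ast}) = t(M,1-W^{\ast}) = 1/72$ (so $(W^{\ast},W^{\ast})$ is a $1/36$-certificate for every $\lambda$, giving $c(D,M) \le 1/36$), and prove the matching lower bound $c_{5/6}(D,M) \ge 1/36$ by a flag-algebra sum-of-squares argument. However, both directions are left as promissory notes rather than proofs, and those two missing ingredients are precisely the content of the theorem. For the upper bound you do not actually produce $W^{\ast}$; you describe a numerical search strategy. The paper's construction is the tensor (categorical) product $K = K_3 \times K_4$, a $6$-regular graph on $12$ vertices whose $D$-density is $1/72$ by direct codegree counting and whose complementary $M$-density is $1/72$ via Goodman's formula and vertex-transitivity. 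Without identifying this graph (or an equivalent one), the upper bound is not established.

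For the lower bound, the intermediate Cauchy--Schwarz step you sketch does not go through on the $M$ side. Your identity $\hom(D,G) = 2\sum_{uv\in E(G)} \operatorname{codeg}_G(u,v)^2$ is a sum of squares indexed by edges, and Cauchy--Schwarz cleanly yields $t(D,W) \ge t(K_3,W)^2/t(K_2,W)$. But your identity $\hom(M,G) = 8\sum_{uv\in E(G)} T_G(u)\,T_G(v)$ is a sum of \emph{products} of different quantities over edges, and there is no analogous one-line Cauchy--Schwarz lower bound for it in terms of $t(K_3,\overline{W})$ and $t(K_2,\overline{W})$ alone --- the quantity can be small even when there are many triangles, if high-triangle vertices in $\overline{G}$ are not adjacent to one another. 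You do acknowledge that the whole argument must ultimately be promoted to an explicit SOS certificate, which is correct, but the certificate is not a routine byproduct of the Cauchy--Schwarz sketch: the paper's proof requires five explicit positive semidefinite matrices in the flag algebra on $6$-vertex types (with flags on $2$, $3$, and $4$ roots), checked against all $156$ graphs on six vertices. In short, the proposal is a correct roadmap but supplies neither the extremal construction nor the certificate, and the suggested Cauchy--Schwarz warm-up for $M$ is not a valid intermediate step.
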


\begin{figure}[htbp]
\begin{center}
\begin{tikzpicture}[scale=0.6]
 \node[circle,fill,inner sep=1.5pt, minimum size=1mm] (A) at (18:1) {};
 \node[circle,fill,inner sep=1.5pt, minimum size=1mm] (B) at (90:1) {};
 \node[circle,fill,inner sep=1.5pt, minimum size=1mm] (C) at (162:1) {};
 \node[circle,fill,inner sep=1.5pt, minimum size=1mm] (D) at (234:1) {};
 \node[circle,fill,inner sep=1.5pt, minimum size=1mm] (E) at (306:1) {};
 \draw (A) -- (B) -- (C) -- (D)--(E)--(A);
 
 \node (name) at (0,-2) {$C_5$};
\end{tikzpicture}\hspace{3em}
\begin{tikzpicture}[scale=0.6]
 \node[circle,fill,inner sep=1.5pt, minimum size=1mm] (A) at (45:1) {};
 \node[circle,fill,inner sep=1.5pt, minimum size=1mm] (B) at (135:1) {};
 \node[circle,fill,inner sep=1.5pt, minimum size=1mm] (C) at (225:1) {};
 \node[circle,fill,inner sep=1.5pt, minimum size=1mm] (D) at (315:1) {};
 \begin{scope}[shift={(225:1)}]
 \node[circle,fill,inner sep=1.5pt, minimum size=1mm] (E) at (270:1.25) {};
 \end{scope}
 \draw (A) -- (B) -- (C) -- (D)--(A);
 \draw (C)--(E);
 
 \node (name) at (0,-2) {$B$};
\end{tikzpicture}\hspace{3em}
\begin{tikzpicture}[scale=0.6]
 \node[circle,fill,inner sep=1.5pt, minimum size=1mm] (A) at (0:1) {};
 \node[circle,fill,inner sep=1.5pt, minimum size=1mm] (B) at (90:1) {};
 \node[circle,fill,inner sep=1.5pt, minimum size=1mm] (C) at (180:1) {};
 \node[circle,fill,inner sep=1.5pt, minimum size=1mm] (D) at (270:1) {};
 \draw (A) -- (B) -- (C) -- (D)--(A);
 \draw (B)--(D);
 
 \node (D) at (0,-2) {$D$};
\end{tikzpicture}\hspace{3em}
\begin{tikzpicture}[scale=0.6]
 \node[circle,fill,inner sep=1.5pt, minimum size=1mm] (A) at (0:1) {};
 \node[circle,fill,inner sep=1.5pt, minimum size=1mm] (B) at (120:1) {};
 \node[circle,fill,inner sep=1.5pt, minimum size=1mm] (C) at (240:1) {};
 \begin{scope}[shift={(3.5,0)}]
 \node[circle,fill,inner sep=1.5pt, minimum size=1mm] (D) at (180:1) {};
 \node[circle,fill,inner sep=1.5pt, minimum size=1mm] (E) at (300:1) {};
 \node[circle,fill,inner sep=1.5pt, minimum size=1mm] (F) at (60:1) {};
 \end{scope}
 \draw (A) -- (B) -- (C) -- (A);
 \draw (D) -- (E) -- (F) -- (D);
 \draw (A) -- (D);
 
 \node (M) at (1.75,-2) {$M$};
\end{tikzpicture}

\end{center}
 \caption{The cycle $C_5$, the banner graph $B$, the diamond graph $D$ and the moth graph $M$.}
 \label{fig:diamondMothBanner}
\end{figure}
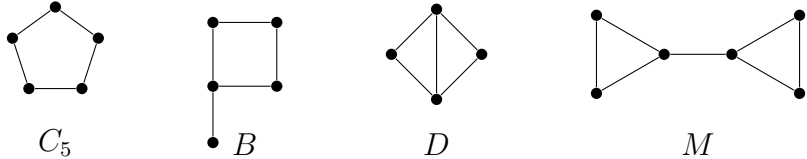

It is interesting to observe differences between the tight examples for Theorems~\ref{th:K3,C5} and~\ref{th:D,M}. To prove the upper bound $c_\lambda(K_3,C_5)\leq 3/34$, we use two different constructions of colourings to cover different ranges of $\lambda$. In contrast, there is a single colouring which proves $c_\lambda(D,M)\leq 1/36$ for all $\lambda\in[0,2]$ simultaneously; see Section~\ref{sec:applications} for details. 



In Section~\ref{sec:limits}, we translate the key definitions in this paper into the language of graph limits and establish a few preliminary observations. In Section~\ref{sec:dual}, we obtain an equivalent ``dual formulation'' of the problem of computing $c(H_1,H_2)$ based on finding certificates in the form of two graph limit objects representing red/blue colourings of $E(K_n)$ for large $n$ with certain special properties; see Theorem~\ref{th:dual}. In Section~\ref{sec:applications}, we use this dual formulation to prove Theorems~\ref{th:c(H,H)=c(H)} and~\ref{th:Sidorenko} as well as all of the upper bounds in Theorems~\ref{th:C5,B}--\ref{th:D,M}. In Section~\ref{sec:flags}, we use flag algebras to prove the lower bounds in Theorems~\ref{th:C5,B}--\ref{th:D,M}. Some of the calculations needed to verify these proofs are included in appendices in an ancillary file for arXiv preprint of the paper: \url{https://arxiv.org/src/2306.17388/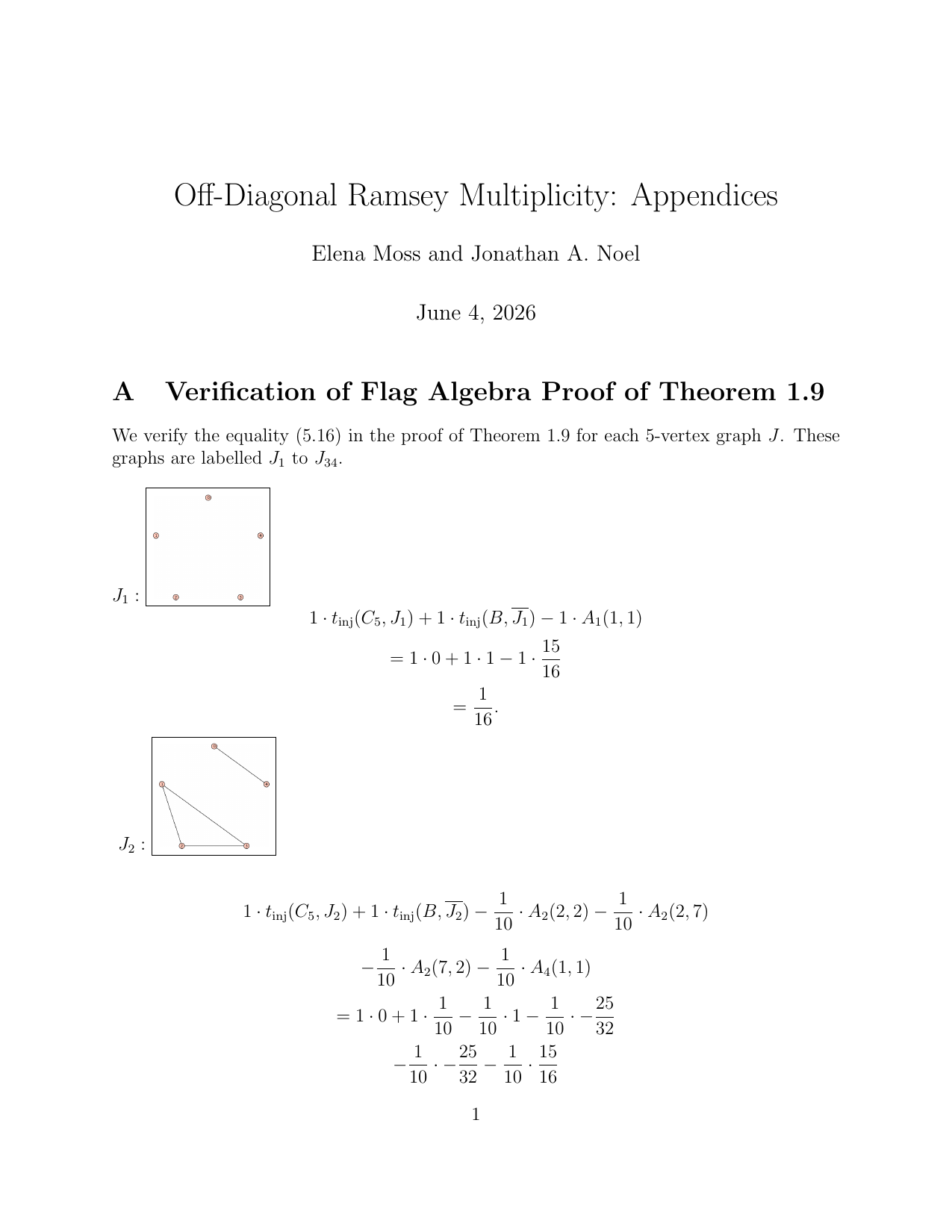}. We conclude the paper in Section~\ref{sec:concl} by discussing several open problems.

\section{Graph Limits} 
\label{sec:limits}

Many asymptotic notions in extremal graph theory can be elegantly formulated in terms of graph limits. Here, we will introduce only the aspects of graph limit theory that we require in this paper; for a comprehensive treatment of the subject, see~\cite{Lovasz12}. 

A bounded measurable function $U:[0,1]^2\to \mathbb{R}$ such that $U(x,y)=U(y,x)$ for all $(x,y)\in [0,1]^2$ is called a \emph{kernel}. A kernel $W$ such that $0\leq W\leq 1$ is a \emph{graphon}. The set of all graphons is denoted by $\mathcal{W}_0$. One way to think of a graphon is as an analytic extension of the concept of the adjacency matrix of a graph. In particular, if $G$ is a graph with vertices $v_1,\dots,v_n$, then we obtain a graphon $W_G$ associated to $G$ by dividing $[0,1]$ into $n$ intervals $I_1,\dots,I_n$ of measure $1/n$ and setting $W_G=1$ on the set $\bigcup_{v_iv_j\in E(G)}\left(I_i\times I_j\right)$ and $W_G=0$ elsewhere. Likewise, a kernel generalizes the concept of an edge-weighted graph. Because of these analogies, we often refer to an element $x\in[0,1]$ as a \emph{vertex} of a kernel $U$. 

The \emph{homomorphism density} of a graph $H$ in a kernel $U$ is defined by
\[t(H,U):=\int_{[0,1]^{V(H)}}\prod_{uv\in E(H)}U(x_u,x_v)\prod_{v\in V(H)}dx_v.\]
It is easily observed that $t(H,G)=t(H,W_G)$ for every graph $G$, where $W_G$ is a graphon associated to $G$ defined in the previous paragraph. Therefore, the notion of homomorphism density for kernels generalizes homomorphism density for graphs. Next, we introduce a standard notion of convergence for dense graphs, which is sometimes referred to as ``left-convergence'' in the graph limits literature.

\begin{defn}
\label{defn:converges}
Say that a sequence $(W_n)_{n=1}^\infty$ of graphons \emph{converges} to a graphon $W$ if $\lim_{n\to\infty} t(H,W_n)=t(H,W)$ for every graph $H$. A sequence $(G_n)_{n=1}^\infty$ of graphs is said to \emph{converge} to a graphon $W$ if $(W_{G_n})_{n=1}^\infty$ converges to $W$. 
\end{defn}

The following ``compactness'' result is vitally important to the study of graph limits and extremal graph theory. For instance, it is a close relative of the powerful Szemer\'edi Regularity Lemma; see~\cite{LovaszSzegedy07}.

\begin{thm}[Lov\'asz and Szegedy~\cite{LovaszSzegedy06}]
\label{th:LS}
For any sequence $(W_n)_{n=1}^\infty$ of graphons, there is a subsequence $(W_{n_k})_{k=1}^\infty$ and a graphon $W$ such that $(W_{n_k})_{k=1}^\infty$ converges to $W$. 
\end{thm}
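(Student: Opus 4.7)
The plan is to combine a diagonal subsequence extraction with the weak Szemer\'edi regularity lemma for graphons. Since there are only countably many finite graphs up to isomorphism, enumerate them as $H_1, H_2, \ldots$. For each $j$, the sequence $\bigl(t(H_j, W_n)\bigr)_{n=1}^\infty$ lies in $[0,1]$, so by Bolzano--Weierstrass it has a convergent subsequence. A standard diagonal argument then yields a subsequence $(W_{n_k})_{k=1}^\infty$ along which $t(H_j, W_{n_k})$ converges for every $j$; denote the limit by $\tau(H_j)$. The task is to produce a graphon $W$ with $t(H, W) = \tau(H)$ for all $H$, since this is exactly the conclusion of the theorem in view of Definition~\ref{defn:converges}.

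To construct $W$, I would invoke the weak regularity lemma for graphons: for each integer $\ell \geq 1$ there is a bound $N(\ell)$ such that every graphon $U$ admits a partition $\mathcal{P}$ of $[0,1]$ into at most $N(\ell)$ measurable parts with $\|U - U_{\mathcal{P}}\|_\square \leq 1/\ell$, where $U_{\mathcal{P}}$ is the step function obtained by averaging $U$ on each block of $\mathcal{P} \times \mathcal{P}$ and $\|\cdot\|_\square$ denotes the cut norm. Applied to each $W_{n_k}$, this produces step approximants $W_{n_k}^{(\ell)}$. After relabelling the parts in decreasing order of measure, each $W_{n_k}^{(\ell)}$ is parameterized by a point in the compact set consisting of a probability vector of length at most $N(\ell)$ and a symmetric $N(\ell) \times N(\ell)$ matrix with entries in $[0,1]$. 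A further diagonal extraction then produces a single subsequence along which, for every $\ell$ simultaneously, the step graphons $W_{n_k}^{(\ell)}$ converge to a step graphon $S^{(\ell)}$ in the cut norm. Comparing $S^{(\ell)}$ and $S^{(\ell')}$ through $W_{n_k}$ via the triangle inequality shows that the sequence $(S^{(\ell)})_\ell$ is Cauchy in the cut norm, and one sets $W := \lim_{\ell} S^{(\ell)}$.

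To verify that $t(H, W) = \tau(H)$ for every finite graph $H$, I would appeal to the counting lemma, namely $|t(H, U) - t(H, V)| \leq e(H) \cdot \|U - V\|_\square$ for all graphons $U$ and $V$. Combined with the approximations from the previous step, a standard $3\varepsilon$-argument then gives $t(H, W_{n_k}) \to t(H, W)$ for every $H$, hence $\tau(H) = t(H, W)$, which completes the proof.

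The main obstacle is the construction of $W$ in the second paragraph. The delicate point is that the step graphons $W_{n_k}^{(\ell)}$ are only canonical up to measure-preserving transformations of $[0,1]$, so the pointwise-in-$\ell$ convergence must really be obtained in an appropriate quotient space; some care is needed to align the partitions across different $k$ and $\ell$ so that the limit $W$ is a genuine graphon rather than merely a consistent family of finite-dimensional data. An alternative and arguably cleaner route is to establish compactness of the space of graphons directly in the cut metric — via a martingale argument phrased in the language of $\sigma$-algebras — and then to invoke the counting lemma to convert cut-metric convergence into left-convergence in the sense of Definition~\ref{defn:converges}.
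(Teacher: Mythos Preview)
The paper does not prove Theorem~\ref{th:LS}; it merely states the result, attributes it to Lov\'asz and Szegedy~\cite{LovaszSzegedy06}, and uses it as a black box throughout Sections~\ref{sec:limits} and~\ref{sec:dual}. So there is no proof in the paper to compare your proposal against.

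That said, your sketch is a reasonable outline of one of the standard proofs. A few comments on the rough edges you partly acknowledge. First, the convergence you want for the step approximants $W_{n_k}^{(\ell)}$ is in the cut \emph{distance} $\delta_\square$ (which infimizes over measure-preserving bijections) rather than the cut norm $\|\cdot\|_\square$; without passing to the quotient you cannot hope to align the partitions across different $k$. Second, once you have $(S^{(\ell)})_\ell$ Cauchy in $\delta_\square$, you still need completeness of the graphon space in the cut distance to produce the limit $W$; this is itself a nontrivial ingredient (it is where the martingale argument you mention in your last paragraph enters). Third, the counting lemma you invoke holds with $\delta_\square$ in place of $\|\cdot\|_\square$, so the final $3\varepsilon$ step goes through once the earlier points are addressed. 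Your alternative route---prove compactness of $(\mathcal{W}_0,\delta_\square)$ directly and then apply the counting lemma---is indeed the cleaner argument and is how the result is presented in~\cite{Lovasz12}.
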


The next result tells us that graphons corresponding to finite graphs are ``dense'' in $\mathcal{W}_0$.

\begin{thm}[See~{\cite[Chapter~10]{Lovasz12}}]
\label{th:LSconverse}
For every graphon $W$ there is a sequence $(G_n)_{n=1}^\infty$ of finite graphs such that $v(G_n)\to\infty$ and $(G_n)_{n=1}^\infty$ converges to $W$.
\end{thm}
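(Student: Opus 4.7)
The plan is to construct the desired sequence probabilistically via the standard $W$-random graph model. For each positive integer $n$, let $\mathbb{G}(n,W)$ denote the random graph on the vertex set $\{1,\dots,n\}$ obtained by sampling $x_1,\dots,x_n$ independently and uniformly from $[0,1]$ and then, for each pair $i<j$ independently, including the edge $ij$ with probability $W(x_i,x_j)$. I will show that with probability one the random sequence $(\mathbb{G}(n,W))_{n=1}^\infty$ converges to $W$ in the sense of Definition~\ref{defn:converges}; any realisation of this probability-one event then yields a deterministic sequence $(G_n)_{n=1}^\infty$ with $v(G_n)=n\to\infty$, as required.

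The first step is to control the expectation. For any fixed graph $H$ on $k$ vertices, by linearity of expectation and Fubini's theorem, the expected number of injective homomorphisms from $H$ to $\mathbb{G}(n,W)$ equals $\frac{n!}{(n-k)!}\,t(H,W)$. The number of non-injective maps from $V(H)$ to $[n]$ is $O(n^{k-1})$, so dividing by $n^k$ gives $\mathbb{E}[t(H,\mathbb{G}(n,W))]\to t(H,W)$ as $n\to\infty$.

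The second step, which I expect to be the main technical obstacle, is to establish a concentration bound of the form
\[\Pr\!\left[\,\left|t(H,\mathbb{G}(n,W))-\mathbb{E}[t(H,\mathbb{G}(n,W))]\right|\geq\varepsilon\right]\leq 2\exp(-c_H\varepsilon^2 n),\]
where $c_H>0$ depends only on $H$. The key observation is that resampling the data associated with a single vertex $i$ (both the coordinate $x_i$ and every edge indicator incident to $i$) changes $t(H,\mathbb{G}(n,W))$ by at most $O(v(H)/n)$. Hence Azuma--Hoeffding applied to the Doob martingale of the natural vertex-exposure filtration yields the claim. The subtlety is that two sources of randomness are mixed at each step: the uniform sample $x_i$ and the Bernoulli edge indicators attached to it. This can be handled either by letting each step of the filtration reveal both kinds of data for one new vertex simultaneously, or by first conditioning on $(x_1,\dots,x_n)$, applying bounded-differences to the edge indicators, and then applying a second concentration step to the resulting conditional means, which are functions of independent uniform variables.

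Finally, to upgrade almost-sure convergence for each fixed $H$ into simultaneous almost-sure convergence for every $H$, I would enumerate the countable set of finite graphs as $H_1,H_2,\dots$. For each $j$, combining the expectation estimate with the concentration inequality (taking $\varepsilon=1/j$) gives $\sum_{n\geq 1}\Pr[|t(H_j,\mathbb{G}(n,W))-t(H_j,W)|>2/j]<\infty$, so the Borel--Cantelli lemma yields $t(H_j,\mathbb{G}(n,W))\to t(H_j,W)$ almost surely. Intersecting these countably many probability-one events preserves probability one, so almost every realisation $(G_n)_{n=1}^\infty$ of $(\mathbb{G}(n,W))_{n=1}^\infty$ satisfies $t(H,G_n)\to t(H,W)$ for every finite graph $H$, completing the proof. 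Once the concentration inequality is in hand, the remaining arguments are routine probabilistic bookkeeping.
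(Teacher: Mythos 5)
Your proposal is correct and reproduces the standard proof from Lov\'asz's book, which is the paper's cited reference for this result: sample the $W$-random graph $\mathbb{G}(n,W)$, compute the expectation of $t(H,\mathbb{G}(n,W))$, establish concentration via Azuma--Hoeffding on the vertex-exposure martingale (noting that resampling one vertex perturbs $t(H,\cdot)$ by $O(v(H)/n)$), and conclude by Borel--Cantelli over a countable enumeration of test graphs. Since the paper states this theorem by citation only and does not supply its own proof, your sketch matches the intended argument essentially verbatim.
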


We state another easy consequence of standard results in graph limit theory.

\begin{lem}[See, e.g.,~{\cite[Lemma~10.23]{Lovasz12}}]
\label{lem:complement}
If $(G_n)_{n=1}^\infty$ is a sequence of graphs which converges to a graphon $W$ such that $v(G_n)\to\infty$, then $(\overline{G_n})_{n=1}^\infty$ converges to $1-W$.
\end{lem}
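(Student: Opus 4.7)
The plan is to verify the definition of convergence directly: for every graph $H$, I need to show $t(H, W_{\overline{G_n}}) \to t(H, 1-W)$. First, I would recall the standard identity $t(F, W_G) = t(F, G)$ for any graph $F$ and finite graph $G$, obtained by unpacking the definition of the associated graphon $W_G$ on the partition of $[0,1]$ into $v(G)$ equal intervals; this means the hypothesis $(G_n) \to W$ is equivalent to $t(F, W_{G_n}) \to t(F, W)$ for every graph $F$.

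The main computation is a polynomial identity for $t(H, 1-U)$ for an arbitrary kernel $U$. Expanding
\[\prod_{uv\in E(H)} \bigl(1 - U(x_u,x_v)\bigr) = \sum_{S\subseteq E(H)} (-1)^{|S|}\prod_{uv\in S} U(x_u,x_v)\]
and integrating over $[0,1]^{V(H)}$ yields
\[t(H, 1-U) = \sum_{S\subseteq E(H)} (-1)^{|S|}\, t(H_S, U),\]
where $H_S$ denotes the spanning subgraph $(V(H),S)$ of $H$. Since $t(H_S, W_{G_n}) \to t(H_S, W)$ for every $S\subseteq E(H)$ by hypothesis, applying this identity to $U = W_{G_n}$ and then to $U = W$ gives $t(H, 1 - W_{G_n}) \to t(H, 1 - W)$.

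It remains to show that $t(H, 1 - W_{G_n})$ and $t(H, W_{\overline{G_n}})$ differ by a quantity tending to $0$. With $[0,1]$ partitioned into $v(G_n)$ intervals $I_1,\dots,I_{v(G_n)}$ as in the construction of $W_{G_n}$, the kernels $1 - W_{G_n}$ and $W_{\overline{G_n}}$ agree outside the union of the diagonal blocks $I_i\times I_i$ (on the diagonal blocks, the first equals $1$ while the second equals $0$, corresponding to the absence of loops in $\overline{G_n}$). These blocks have total measure $1/v(G_n)$, so using the elementary estimate $\bigl|\prod a_i - \prod b_i\bigr| \leq \sum |a_i - b_i|$ for $a_i,b_i \in [0,1]$ edge-by-edge gives
\[\bigl|t(H, 1 - W_{G_n}) - t(H, W_{\overline{G_n}})\bigr| \;\leq\; \frac{e(H)}{v(G_n)} \;\to\; 0.\]
Combining the two displays yields $t(H, W_{\overline{G_n}}) \to t(H, 1-W)$, which is the desired convergence.

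The proof has no real obstacle; the only subtle point is the pedantic distinction between the graphon $W_{\overline{G_n}}$ associated to the complement graph and the pointwise complement $1 - W_{G_n}$ of the original graphon, and the discrepancy on the diagonal blocks is negligible because $v(G_n) \to \infty$. The inclusion-exclusion identity for $1-U$ is the conceptual core of the argument.
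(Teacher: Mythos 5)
Your proof is correct. The paper itself does not supply a proof of this lemma but simply cites Lovász's book, so there is no ``paper's proof'' to compare against; your argument is a valid, self-contained, standard one. The two main ideas are both sound: the inclusion--exclusion identity
\[
t(H,1-U)=\sum_{S\subseteq E(H)}(-1)^{|S|}\,t\bigl((V(H),S),U\bigr)
\]
reduces convergence of $t(H,1-W_{G_n})$ to convergence of homomorphism densities of subgraphs of $H$ (which is exactly the hypothesis), and the estimate
\[
\bigl|t(H,1-W_{G_n})-t(H,W_{\overline{G_n}})\bigr|\leq \frac{e(H)}{v(G_n)}
\]
correctly accounts for the discrepancy on the diagonal blocks (where $1-W_{G_n}$ equals $1$ but $W_{\overline{G_n}}$ equals $0$, total measure $1/v(G_n)$), using the telescoping bound $\bigl|\prod a_i-\prod b_i\bigr|\leq\sum|a_i-b_i|$ for numbers in $[0,1]$. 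This is where the hypothesis $v(G_n)\to\infty$ is used, and you correctly isolate it as the only nontrivial point.
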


Our next goal is to rephrase the definitions from the introduction in terms of graph limits and to use this perspective to derive some basic properties. We start with the following alternative definition of $c_\lambda(H_1,H_2)$. 

\begin{lem}
\label{lem:altDef}
For any graphs $H_1$ and $H_2$ and any $\lambda\in [0,2]$, 
\[c_\lambda(H_1,H_2)=\min_{W\in\mathcal{W}_0}\left[\lambda\cdot t(H_1,W) + (2-\lambda)\cdot t(H_2,1-W)\right].\]
\end{lem}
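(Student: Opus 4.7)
The strategy is to argue the two inequalities separately, after first establishing that the right-hand side actually attains its infimum. Write $F(W) := \lambda \cdot t(H_1, W) + (2-\lambda) \cdot t(H_2, 1-W)$ for $W \in \mathcal{W}_0$. By Definition~\ref{defn:converges}, the map $W \mapsto t(H, W)$ is continuous under left-convergence for every fixed graph $H$, and by expanding $\prod_{uv \in E(H_2)}(1 - W(x_u, x_v))$ via inclusion-exclusion we see that $t(H_2, 1-W)$ is a finite $\mathbb{Z}$-linear combination of densities $t(H_2', W)$ indexed by spanning subgraphs $H_2' \subseteq H_2$; hence $F$ is continuous. Combined with the sequential compactness of $\mathcal{W}_0$ guaranteed by Theorem~\ref{th:LS}, this yields a minimizer $W^* \in \mathcal{W}_0$ of $F$, which justifies the use of $\min$ in the statement.

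For the direction $c_\lambda(H_1, H_2) \geq \min_{W \in \mathcal{W}_0} F(W)$, I would pass to a subsequence $(G_{n_k})$ of the enumeration along which $\lambda \cdot t(H_1, G_{n_k}) + (2-\lambda) \cdot t(H_2, \overline{G_{n_k}})$ converges to $c_\lambda(H_1, H_2)$. Since each finite graph appears exactly once in the enumeration, $v(G_n) \to \infty$, so in particular $v(G_{n_k}) \to \infty$. Applying Theorem~\ref{th:LS} to $(W_{G_{n_k}})$ yields a further subsequence converging to some graphon $W$, and Lemma~\ref{lem:complement} then gives that the complements converge to $1-W$. Passing to the limit, one obtains $c_\lambda(H_1, H_2) = F(W) \geq \min_{W'} F(W')$.

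For the reverse direction, let $W^*$ minimize $F$ and apply Theorem~\ref{th:LSconverse} to obtain a sequence of finite graphs $(H_k)_{k=1}^\infty$ with $v(H_k) \to \infty$ converging to $W^*$. Each $H_k$ equals $G_{m_k}$ for a unique index $m_k$ in the enumeration, and $m_k \to \infty$ because only finitely many terms of the enumeration can have vertex count below any fixed bound. Choosing a strictly increasing sub-subsequence $m_{k_j}$ produces a subsequence $(G_{m_{k_j}}) = (H_{k_j})$ of the enumeration which is simultaneously a subsequence of $(H_k)$, and thus still converges to $W^*$. Lemma~\ref{lem:complement} then gives $\lambda \cdot t(H_1, G_{m_{k_j}}) + (2-\lambda) \cdot t(H_2, \overline{G_{m_{k_j}}}) \to F(W^*)$, whence $c_\lambda(H_1, H_2) \leq F(W^*) = \min_{W'} F(W')$.

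The only technically delicate step is the re-indexing in the last paragraph: left-convergence is sensitive to the ordering of the sequence, so one cannot simply permute $(H_k)$ to match the enumeration order. Extracting a strictly increasing sub-subsequence of the indices $m_k$ (possible because $m_k \to \infty$) sidesteps the issue, since any subsequence of a left-convergent sequence of graphs converges to the same limit. The rest of the argument is standard bookkeeping using the compactness and density theorems quoted from the graph limits literature.
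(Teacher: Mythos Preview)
Your proof is correct and follows essentially the same route as the paper's: compactness (Theorem~\ref{th:LS}) plus Lemma~\ref{lem:complement} for the inequality $c_\lambda \geq \min F$, and density of finite graphs (Theorem~\ref{th:LSconverse}) for the reverse inequality. The paper is slightly terser---it obtains the minimizer as a byproduct of the first inequality rather than via a separate continuity/compactness argument, and it glosses over the re-indexing step that you spell out carefully---but the substance is the same.
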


\begin{proof}
By definition of $c_\lambda(H_1,H_2)$, there exists a sequence $(G_n)_{n=1}^\infty$ of graphs such that $v(G_n)\to\infty$ and 
\[\lim_{n\to\infty} [\lambda\cdot t(H_1,G_n) + (2-\lambda)\cdot t(H_2,\overline{G_n})]=c_\lambda(H_1,H_2).\]
By Theorem~\ref{th:LS}, there is a subsequence $(G_{n_k})_{k=1}^\infty$ and a graphon $W$ such that $(G_{n_k})_{k=1}^\infty$ converges to $W$. By Lemma~\ref{lem:complement}, $\overline{G_{n_k}}$ converges to $1-W$. Therefore, 
\[c_\lambda(H_1,H_2)=\lim_{n\to\infty} [\lambda\cdot t(H_1,G_n) + (2-\lambda)\cdot t(H_2,\overline{G_n})]\]
\[= \lim_{k\to\infty} [\lambda\cdot t(H_1,G_{n_k}) + (2-\lambda)\cdot t(H_2,\overline{G_{n_k}})] = \lambda \cdot t(H_1,W)+(2-\lambda)\cdot t(H_2,1-W).\]
So, there exists a graphon $W$ such that
\[\lambda \cdot t(H_1,W)+(2-\lambda)\cdot t(H_2,1-W)=c_\lambda(H_1,H_2).\]
All that remains is to show that there cannot exist a graphon $W'$ such that
\[\lambda \cdot t(H_1,W')+(2-\lambda)\cdot t(H_2,1-W')<c_\lambda(H_1,H_2).\]
To see this, we use Theorem~\ref{th:LSconverse}. That is, if such a $W'$ existed, then we could take $G_1',G_2',\dots$ to be a sequence of graphs of increasing orders which converges to $W'$. By Lemma~\ref{lem:complement}, the sequence of complements of these graphs converges to $1-W'$. However, then we would have
\[\lim_{n\to\infty} [\lambda\cdot t(H_1,G_n') + (2-\lambda)\cdot t(H_2,\overline{G_n'})] = \lambda \cdot t(H_1,W')+(2-\lambda)\cdot t(H_2,1-W')<c_\lambda(H_1,H_2)\]
which contradicts the definition of $c_\lambda(H_1,H_2)$ and completes the proof.
\end{proof}

The previous lemma suggests the following definition.

\begin{defn}
Let $H_1$ and $H_2$ be graphs and $\lambda\in [0,2]$. We say that a graphon $W$ is \emph{$\lambda$-optimal} for $(H_1,H_2)$ if 
\[\lambda\cdot t(H_1,W) + (2-\lambda)\cdot t(H_2,1-W) = c_\lambda(H_1,H_2).\]
\end{defn}

Next, we show that $c_\lambda(H_1,H_2)$ is continuous when viewed as a function of $\lambda$.

\begin{lem}
\label{lem:cont}
For any two graphs $H_1$ and $H_2$, $c_\lambda(H_1,H_2)$ is a continuous function of $\lambda$. 
\end{lem}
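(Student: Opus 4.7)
The plan is to exploit the dual representation provided by Lemma~\ref{lem:altDef}, which writes $c_\lambda(H_1,H_2)$ as a minimum over graphons of an affine function of $\lambda$. First I would fix $W\in\mathcal{W}_0$ and define
\[f_W(\lambda):=\lambda\cdot t(H_1,W)+(2-\lambda)\cdot t(H_2,1-W).\]
Since $0\le t(H_1,W)\le 1$ and $0\le t(H_2,1-W)\le 1$, the map $f_W$ is affine in $\lambda$ with slope $t(H_1,W)-t(H_2,1-W)\in[-1,1]$. In particular, $|f_W(\lambda_1)-f_W(\lambda_2)|\le|\lambda_1-\lambda_2|$ for every $W\in\mathcal{W}_0$ and every $\lambda_1,\lambda_2\in[0,2]$.

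The key step is to transfer this uniform Lipschitz bound from the family $\{f_W\}_{W\in\mathcal{W}_0}$ to its pointwise infimum. Given $\lambda_1,\lambda_2\in[0,2]$, Lemma~\ref{lem:altDef} allows me to pick a graphon $W_2\in\mathcal{W}_0$ that is $\lambda_2$-optimal for $(H_1,H_2)$, meaning $f_{W_2}(\lambda_2)=c_{\lambda_2}(H_1,H_2)$. Then
\[c_{\lambda_1}(H_1,H_2)\le f_{W_2}(\lambda_1)\le f_{W_2}(\lambda_2)+|\lambda_1-\lambda_2|=c_{\lambda_2}(H_1,H_2)+|\lambda_1-\lambda_2|.\]
Interchanging the roles of $\lambda_1$ and $\lambda_2$ and taking $W_1$ to be a $\lambda_1$-optimal graphon yields the reverse inequality, and so
\[|c_{\lambda_1}(H_1,H_2)-c_{\lambda_2}(H_1,H_2)|\le|\lambda_1-\lambda_2|.\]
Thus $\lambda\mapsto c_\lambda(H_1,H_2)$ is $1$-Lipschitz on $[0,2]$, and in particular continuous.

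There is essentially no obstacle here: the only ingredient beyond elementary estimates is the existence of a minimizing graphon, which is already delivered by Lemma~\ref{lem:altDef} (and which in turn rests on the Lov\'asz--Szegedy compactness theorem, Theorem~\ref{th:LS}). If one preferred to avoid invoking attainment of the minimum, an essentially identical argument using $\varepsilon$-almost minimizers followed by letting $\varepsilon\to 0$ would work just as well. As a side benefit, the same argument shows that $c_\lambda(H_1,H_2)$ is concave in $\lambda$ (being an infimum of affine functions), a fact that may be of independent use later in the paper.
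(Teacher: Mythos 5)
Your proof is correct, and it takes a genuinely different and cleaner route than the paper. The paper argues by contradiction and compactness: assuming a sequence $\lambda_n\to\lambda$ with $|c_{\lambda_n}-c_\lambda|\geq\varepsilon$, it extracts $\lambda_n$-optimal graphons $W_n$, applies Theorem~\ref{th:LS} to get a convergent subsequence $W_{n_k}\to W$, and then squeezes $\lim_k c_{\lambda_{n_k}}$ between $c_\lambda$ from both sides using Lemma~\ref{lem:altDef}. Your argument instead notices that $c_\lambda(H_1,H_2)=\inf_W f_W(\lambda)$ is a pointwise infimum of affine functions $f_W$ whose slopes $t(H_1,W)-t(H_2,1-W)$ all lie in $[-1,1]$, so the infimum inherits the uniform $1$-Lipschitz bound. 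This is shorter, avoids the contradiction setup, does not need to re-invoke compactness beyond what Lemma~\ref{lem:altDef} already supplies (and, as you note, even that could be sidestepped via $\varepsilon$-almost minimizers), and delivers strictly more information: $1$-Lipschitz continuity and concavity of $\lambda\mapsto c_\lambda(H_1,H_2)$, whereas the paper's argument only yields bare continuity. Concavity in particular would give an alternative justification that the supremum in Definition~\ref{defn:balancedRM} is attained, so your observation has some potential downstream use.
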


\begin{proof}
Suppose, to the contrary, that there exists $\lambda,\lambda_1,\lambda_2,\ldots\in [0,2]$ and $\varepsilon>0$ such that $(\lambda_n)_{n=1}^\infty$ converges to $\lambda$ and $|c_{\lambda_n}(H_1,H_2)-c_\lambda(H_1,H_2)|\geq \varepsilon$ for all $n\geq1$. By Lemma~\ref{lem:altDef}, for each $n\geq1$ we can let $W_n$ be a $\lambda_n$-optimal graphon. By Theorem~\ref{th:LS}, there is a subsequence $(W_{n_k})_{k=1}^\infty$ and a graphon $W$ such that $(W_{n_k})_{k=1}^\infty$ converges to $W$. By Observation~\ref{obs:02} and compactness of $[0,2]$, we can additionally assume that $\lim_{k\to\infty}c_{\lambda_{n_k}}(H_1,H_2)$ exists. Since $W_{n_k}$ is $\lambda_{n_k}$-optimal, we have that
\begin{equation}\label{eq:clambda1}
\begin{gathered}
\lim_{k\to\infty} c_{\lambda_{n_k}}(H_1,H_2) = \lim_{k\to\infty} \lambda_{n_k}t(H_1,W_{n_k})+(2-\lambda_{n_k})t(H_2,1-W_{n_k})\\= \lambda\cdot t(H_1,W) + (2-\lambda)t(H_2,1-W)\geq c_\lambda(H_1,H_2)\end{gathered}\end{equation}
where the last inequality holds due to Lemma~\ref{lem:altDef}.

Now, by Lemma~\ref{lem:altDef}, we can let $W'$ be a $\lambda$-optimal graphon. Applying Lemma~\ref{lem:altDef} again, we see that 
\[c_{\lambda_{n_k}}(H_1,H_2)\leq \lambda_{n_k}\cdot t(H_1,W')+(2-\lambda_{n_k})\cdot t(H_2,1-W')\]
for all $k\geq1$. So, 
\begin{equation}
\label{eq:clambda2}
\begin{gathered}
\lim_{k\to\infty}c_{\lambda_{n_k}}(H_1,H_2)\leq \lim_{k\to\infty}\left(\lambda_{n_k}\cdot t(H_1,W')+(2-\lambda_{n_k})\cdot t(H_2,1-W')\right)\\
=\lambda\cdot t(H_1,W')+(2-\lambda)\cdot t(H_2,1-W') = c_\lambda(H_1,H_2).
\end{gathered}
\end{equation}
Putting \eqref{eq:clambda1} and \eqref{eq:clambda2} together, we get that 
\[\lim_{k\to\infty}c_{\lambda_{n_k}}(H_1,H_2) = c_\lambda(H_1,H_2).\]
However, this contradicts the assumption that $|c_{\lambda_n}(H_1,H_2)-c_\lambda(H_1,H_2)|\geq \varepsilon$ for all $n\geq1$ and completes the proof.
\end{proof}

Thus, by Lemma~\ref{lem:cont} and the Extreme Value Theorem, we get that the supremum in Definition~\ref{defn:balancedRM} can be replaced by a maximum.

\begin{cor}
\label{cor:max} 
For any graphs $H_1$ and $H_2$, 
\[c(H_1,H_2)=\max_{\lambda\in[0,2]}c_\lambda(H_1,H_2).\]
\end{cor}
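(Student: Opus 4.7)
The plan is a direct application of the Extreme Value Theorem, using the two ingredients already established in the preceding discussion. Recall that by Definition~\ref{defn:balancedRM} we have
\[
c(H_1,H_2)=\sup_{\lambda\in[0,2]} c_\lambda(H_1,H_2),
\]
so the only thing to verify is that this supremum is attained by some $\lambda^*\in[0,2]$.

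First I would invoke Lemma~\ref{lem:cont}, which states that the map $\lambda\mapsto c_\lambda(H_1,H_2)$ is continuous on $[0,2]$. Next, I would observe that $[0,2]$ is a compact subset of $\mathbb{R}$. The Extreme Value Theorem then guarantees that any real-valued continuous function on a nonempty compact set attains its maximum, so there exists $\lambda^*\in[0,2]$ with
\[
c_{\lambda^*}(H_1,H_2)=\sup_{\lambda\in[0,2]} c_\lambda(H_1,H_2)=c(H_1,H_2).
\]
Therefore the supremum in Definition~\ref{defn:balancedRM} can be replaced by a maximum, as claimed.

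There is really no obstacle here: the substantive work has been done in Lemma~\ref{lem:cont} (continuity via the compactness of the space of graphons, Theorem~\ref{th:LS}, together with Lemma~\ref{lem:altDef}), and Observation~\ref{obs:02} ensures that $c_\lambda(H_1,H_2)$ is finite-valued on $[0,2]$ so the Extreme Value Theorem applies in its standard form. The corollary is essentially a one-line consequence.
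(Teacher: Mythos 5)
Your proof is correct and follows essentially the same route the paper takes: apply Lemma~\ref{lem:cont} together with compactness of $[0,2]$ and the Extreme Value Theorem to conclude that the supremum is attained. Nothing to add.
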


Next, we derive a lower bound on $c_\lambda(H_1,H_2)$ via a standard double-counting argument.

\begin{lem}
\label{lem:trivialLower}
For any non-empty graphs $H_1$ and $H_2$ and $\lambda\in[0,2]$,
\[c_\lambda(H_1,H_2)\geq \frac{\min\{\lambda,2-\lambda\}\cdot(r(H_1,H_2)-\max\{v(H_1),v(H_2)\})!}{r(H_1,H_2)!}.\]
\end{lem}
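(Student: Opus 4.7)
The plan is a standard Ramsey-type double-counting argument. Let $R = r(H_1,H_2)$ and set $v := \max\{v(H_1),v(H_2)\}$. In view of Lemma~\ref{lem:altDef} it suffices to prove that every graphon $W$ satisfies
\[\lambda\cdot t(H_1,W) + (2-\lambda)\cdot t(H_2,1-W) \geq \frac{\min\{\lambda,2-\lambda\}\cdot (R-v)!}{R!},\]
and by Theorem~\ref{th:LSconverse} and Lemma~\ref{lem:complement} this follows by establishing the analogous bound (up to an $o(1)$ error) for finite graphs $G$ on $N$ vertices and letting $N\to\infty$ along a sequence converging to $W$.

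The main step is the double-count. For each $R$-subset $S\subseteq V(G)$, the definition of $r(H_1,H_2)$ guarantees that either $G[S]$ contains a copy of $H_1$ or $\overline{G}[S]$ contains a copy of $H_2$. Letting $a_S$ and $b_S$ denote the numbers of injective homomorphisms of $H_1$ into $G[S]$ and of $H_2$ into $\overline{G}[S]$ respectively, we therefore have $a_S+b_S\geq 1$, so that
\[\lambda a_S + (2-\lambda) b_S \geq \min\{\lambda,2-\lambda\}.\]
Summing over $S$ and noting that each injective homomorphism of $H_i$ into $G$ (resp.\ into $\overline{G}$) is contained in exactly $\binom{N-v(H_i)}{R-v(H_i)}$ of the $R$-subsets, one obtains
\[\lambda\cdot \inj(H_1,G)\binom{N-v(H_1)}{R-v(H_1)} + (2-\lambda)\cdot \inj(H_2,\overline{G})\binom{N-v(H_2)}{R-v(H_2)}\geq \min\{\lambda,2-\lambda\}\binom{N}{R}.\]

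Dividing by $N^R$ and letting $N\to\infty$ along a sequence $(G_N)$ converging to $W$, I invoke the routine asymptotics $\inj(H_i,G_N)/N^{v(H_i)}\to t(H_i,\cdot)$ (the homomorphism density and the injective-homomorphism density differ by $O(1/N)$) together with $\binom{N-k}{R-k}/N^{R-k}\to 1/(R-k)!$ and $\binom{N}{R}/N^R\to 1/R!$ to conclude
\[\frac{\lambda\cdot t(H_1,W)}{(R-v(H_1))!} + \frac{(2-\lambda)\cdot t(H_2,1-W)}{(R-v(H_2))!} \geq \frac{\min\{\lambda,2-\lambda\}}{R!}.\]
Since $v(H_i)\leq v$ gives $(R-v(H_i))!\geq (R-v)!$ and hence $1/(R-v(H_i))!\leq 1/(R-v)!$, replacing the denominators on the left by the common value $(R-v)!$ only makes the left-hand side larger, and the resulting inequality rearranges to the claimed bound. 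There is no real obstacle here: the only mild subtlety is the passage between injective homomorphism counts, which arise naturally from the double-count, and the homomorphism densities used in Definition~\ref{defn:lambdaRM}, which is handled by the $O(1/N)$ slack noted above.
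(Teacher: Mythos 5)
Your proof is correct and uses the same Ramsey double-counting argument as the paper: partition the $R$-subsets of $V(G)$, note each must contain a red $H_1$ or blue $H_2$, and count. The only cosmetic differences are that you route through the graphon reformulation (Lemma~\ref{lem:altDef} and Theorem~\ref{th:LSconverse}) before passing to finite graphs and you track injective homomorphism counts $a_S,b_S$ per set rather than the number of sets $N_{n,1},N_{n,2}$ containing at least one copy, which gives an exact identity in place of the paper's one-sided overcounting bound.
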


\begin{proof}
Let $G_1,G_2,\dots$ be a sequence of graphs such that $v(G_n)\to\infty$. By definition of $r(H_1,H_2)$, for every set $S\subseteq V(G_n)$ of cardinality $r(H_1,H_2)$, either the subgraph of $G_n$ induced by $S$, denoted $G_n[S]$, contains a labelled copy of $H_1$ or $\overline{G_n}[S]$ contains a labelled copy of $H_2$. Letting $N_{n,1}$ be the number of such sets $S$ containing a copy of $H_1$ in $G_n[S]$ and $N_{n,2}$ be the number containing a copy of $H_2$ in $\overline{G_n}[S]$, we have that
\begin{equation}\label{eq:overcount}N_{n,1} + N_{n,2}\geq \binom{v(G_n)}{r(H_1,H_2)}.\end{equation}
The quantity $N_{n,1}$ counts every labelled copy of $H_1$ in $G_n$ at most $\binom{v(G_n)-v(H_1)}{r(H_1,H_2)-v(H_1)}$ times. Thus, the total number of copies of $H_1$ in $G_n$ is at least
\[\frac{N_{n,1}}{\binom{v(G_n)-v(H_1)}{r(H_1,H_2)-v(H_1)}}\geq\frac{N_{n,1}(r(H_1,H_2)-v(H_1))!}{v(G_n)^{r(H_1,H_2)-v(H_1)}}\]
and, likewise, the total number of copies of $H_2$ in $\overline{G_n}$ is at least
\[\frac{N_{n,2}}{\binom{v(G_n)-v(H_2)}{r(H_1,H_2)-v(H_2)}}\geq \frac{N_{n,2}(r(H_1,H_2)-v(H_2))!}{v(G_n)^{r(H_1,H_2)-v(H_2)}}.\]
Thus, letting $h=\max\{v(H_1),v(H_2)\}$ and $\ell:=\min\{\lambda,2-\lambda\}$, we get
\[\lambda\cdot t(H_1,G_n) + (2-\lambda)\cdot t(H_2,\overline{G_n})\]
\[\geq \frac{\lambda\cdot N_{n,1}\cdot(r(H_1,H_2)-v(H_1))! + (2-\lambda)\cdot N_{n,2}\cdot(r(H_1,H_2)-v(H_2))!}{v(G_n)^{r(H_1,H_2)}}\]
\[\geq \frac{\ell\cdot \left(N_{n,1}+N_{n,2}\right)\cdot (r(H_1,H_2)-h)! }{v(G_n)^{r(H_1,H_2)}}\]
By \eqref{eq:overcount}, we can bound this quantity below by 
\[\frac{\ell\cdot \binom{v(G_n)}{r(H_1,H_2)}(r(H_1,H_2)-h)!}{v(G_n)^{r(H_1,H_2)}}.\]
The limit of this expression as $v(G_n)$ tends to infinity is $\frac{\ell\cdot (r(H_1,H_2)-h)!}{r(H_1,H_2)!}$. Thus, 
\[c_\lambda(H_1,H_2)\geq\frac{\ell\cdot(r(H_1,H_2)-h)!}{r(H_1,H_2)!}.\]
\end{proof}

The following proposition is useful for ruling out the ``extreme points'' $\lambda=0$ and $\lambda=2$ when computing $c(H_1,H_2)$.

\begin{prop}
\label{prop:nonempty}
Let $H_1$ and $H_2$ be graphs. Then $c(H_1,H_2)=c_0(H_1,H_2)$ if and only if $H_2$ is empty. 
\end{prop}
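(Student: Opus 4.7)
The plan is to prove both directions by direct analysis of the quantity $\lambda \cdot t(H_1,W)+(2-\lambda)\cdot t(H_2,1-W)$, using Lemma~\ref{lem:altDef} to rewrite $c_\lambda$ as a minimum over graphons and exploiting the fact that $t(F,U)=1$ for every kernel $U$ when $F$ is empty.

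For the ``if'' direction, suppose $H_2$ is empty. Then $t(H_2,1-W)=1$ for every graphon $W$, so by Lemma~\ref{lem:altDef},
\[c_\lambda(H_1,H_2)=\min_{W\in\mathcal{W}_0}\bigl[\lambda\cdot t(H_1,W)\bigr]+(2-\lambda).\]
If $H_1$ is non-empty, the graphon $W\equiv 0$ gives $t(H_1,W)=0$, so $c_\lambda(H_1,H_2)=2-\lambda$; if $H_1$ is empty, then $t(H_1,W)=1$ identically and $c_\lambda(H_1,H_2)=2$. In both cases the function $\lambda\mapsto c_\lambda(H_1,H_2)$ is non-increasing on $[0,2]$, so by Corollary~\ref{cor:max} its maximum is attained at $\lambda=0$, yielding $c(H_1,H_2)=c_0(H_1,H_2)$.

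For the ``only if'' direction, suppose $H_2$ is non-empty. I will show $c_0(H_1,H_2)=0$ while $c_\lambda(H_1,H_2)>0$ for some $\lambda\in(0,2]$, which suffices. Choosing the graphon $W\equiv 1$ gives $t(H_2,1-W)=t(H_2,0)=0$ because $H_2$ has at least one edge, so $c_0(H_1,H_2)=\min_W 2\cdot t(H_2,1-W)=0$ by Lemma~\ref{lem:altDef}. It remains to produce a positive value of $c_\lambda$ for some $\lambda>0$. If $H_1$ is also non-empty, Lemma~\ref{lem:trivialLower} applied with $\lambda=1$ yields $c_1(H_1,H_2)>0$, hence $c(H_1,H_2)\geq c_1(H_1,H_2)>0=c_0(H_1,H_2)$.

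The main subtlety is the degenerate case in which $H_1$ is empty (so Lemma~\ref{lem:trivialLower} does not directly apply). In this situation $t(H_1,W)=1$ for every $W$, so Lemma~\ref{lem:altDef} gives
\[c_\lambda(H_1,H_2)=\lambda+(2-\lambda)\cdot\min_{W\in\mathcal{W}_0} t(H_2,1-W)=\lambda,\]
where the last equality again uses $W\equiv 1$ together with $H_2$ non-empty. Hence $c_2(H_1,H_2)=2>0=c_0(H_1,H_2)$, which completes the argument. I expect no serious obstacle beyond remembering to handle this degenerate case separately, since Lemma~\ref{lem:trivialLower} is stated only for non-empty $H_1,H_2$.
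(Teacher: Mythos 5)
Your proof is correct and follows essentially the same route as the paper's: when $H_2$ is empty you reduce to $c_\lambda$ being non-increasing (the paper instead notes $c_0=2$ and uses the trivial bound $c\leq 2$, which is a touch quicker but equivalent); when $H_2$ is non-empty you show $c_0=0$ via $W\equiv 1$ and then find some $\lambda$ with $c_\lambda>0$. The one place you add value is the explicit treatment of the degenerate case where $H_1$ is empty. The paper's proof invokes Lemma~\ref{lem:trivialLower} to get $c_1(H_1,H_2)>0$, but that lemma is stated only for $H_1$ and $H_2$ both non-empty, so its citation does not strictly cover the possibility that $H_1$ is empty even though $H_2$ is not. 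Your direct computation $c_\lambda(H_1,H_2)=\lambda$ in that subcase (using $t(H_1,W)\equiv 1$ and $\min_W t(H_2,1-W)=0$) closes that small gap cleanly; one could also just observe that $c_1(H_1,H_2)=\min_W[1+t(H_2,1-W)]\geq 1>0$ when $H_1$ is empty, which patches the paper's citation with less work. Either way, your argument is complete and slightly more careful than the one in the paper.
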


\begin{proof}
If $H_2$ is empty, then $t(H_2,1-W)=1$ for every graphon $W$. Thus, 
\[c(H_1,H_2)\geq c_0(H_1,H_2)=2\cdot \min_{W\in\mathcal{W}_0}t(H_2,1-W) = 2.\]
Also, $c(H_1,H_2)\leq 2$ by Observation~\ref{obs:02}. Thus, $c(H_1,H_2)=c_0(H_1,H_2)=2$ if $H_2$ is empty. 

If $H_1$ is empty and $H_2$ is not empty, then $c_2(H_1,H_2)=2$, while, as above, $c_0(H_1,H_2)=0$. Hence $c(H_1,H_2)\neq c_0(H_1,H_2)$.

On the other hand, if both $H_1$ and $H_2$ are non-empty, then, by taking $W:=1$, we see that
\[c_0(H_1,H_2)\leq 2\cdot t(H_2,1-W) = 0.\]
However, by Lemma~\ref{lem:trivialLower}, we have $c(H_1,H_2)\geq c_1(H_1,H_2)>0$. Thus, $c(H_1,H_2)\neq c_0(H_1,H_2)$ and the result follows. 
\end{proof}

\section{A Dual Perspective}
\label{sec:dual}

By Corollary~\ref{cor:max}, $c(H_1,H_2)$ is the maximum over all $\lambda$ of $c_\lambda(H_1,H_2)$. Our goal in this section is to show how computing $c(H_1,H_2)$ can be equivalently phrased as a minimization problem over pairs of graphons representing edge colourings of cliques. The key definition and theorem of this section are as follows.

\begin{defn}
\label{defn:certificate}
Let $H_1$ and $H_2$ be graphs and let $\alpha\in\mathbb{R}$. Say that a pair $(W_1,W_2)$ is an \emph{$\alpha$-certificate} for $(H_1,H_2)$ if there exists $\lambda\in[0,2]$ such that the following four bounds hold:
\begin{enumerate}
\stepcounter{equation}
 \item\label{eq:ralpha} $\lambda\cdot t(H_1,W_1) + (2-\lambda)\cdot t(H_2,1-W_1) \leq \alpha$,
\stepcounter{equation}
 \item\label{eq:balpha} $\lambda\cdot t(H_1,W_2) + (2-\lambda)\cdot t(H_2,1-W_2) \leq \alpha$,
\stepcounter{equation}
 \item\label{eq:rbigger}$t(H_1,W_1)\geq t(H_2,1-W_1)$ and
\stepcounter{equation}
 \item\label{eq:bbigger} $t(H_1,W_2)\leq t(H_2,1-W_2)$.
\end{enumerate}
\end{defn}

\begin{thm}
\label{th:dual}
Let $H_1$ and $H_2$ be non-empty graphs. Then $c(H_1,H_2)$ is equal to the minimum over all $\alpha$ for which there exists an $\alpha$-certificate for $(H_1,H_2)$. 
\end{thm}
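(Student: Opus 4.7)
I would prove the two inequalities between $c(H_1,H_2)$ and the minimum $\alpha$ admitting an $\alpha$-certificate separately. Throughout, for a graphon $W$ I write $g_W(\mu) := \mu\cdot t(H_1,W)+(2-\mu)\cdot t(H_2,1-W)$, which is linear in $\mu$ with slope $F(W):=t(H_1,W)-t(H_2,1-W)$.

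For the direction ``an $\alpha$-certificate implies $c(H_1,H_2)\le\alpha$'', let $(W_1,W_2)$ be an $\alpha$-certificate with parameter $\lambda$, and fix any $\lambda'\in[0,2]$. If $\lambda'\le\lambda$, condition~(\ref{eq:rbigger}) gives $F(W_1)\ge0$, so $g_{W_1}(\lambda')=g_{W_1}(\lambda)+(\lambda'-\lambda)F(W_1)\le g_{W_1}(\lambda)\le\alpha$ by~(\ref{eq:ralpha}); if $\lambda'\ge\lambda$, a symmetric argument using~(\ref{eq:bbigger}) and~(\ref{eq:balpha}) yields $g_{W_2}(\lambda')\le\alpha$. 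In either case Lemma~\ref{lem:altDef} gives $c_{\lambda'}(H_1,H_2)\le\alpha$, and taking the supremum over $\lambda'$ via Corollary~\ref{cor:max} yields $c(H_1,H_2)\le\alpha$.

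For the reverse inequality, I would construct an $\alpha$-certificate with $\alpha:=c(H_1,H_2)$. By Corollary~\ref{cor:max}, fix $\lambda^*\in[0,2]$ realizing the maximum; since $H_1,H_2$ are non-empty, Proposition~\ref{prop:nonempty} combined with Fact~\ref{fact:ch1h2isch2h1} and Remark~\ref{rem:swap} rules out $\lambda^*\in\{0,2\}$, so $\lambda^*\in(0,2)$. Let $O\subseteq\mathcal{W}_0$ denote the set of $\lambda^*$-optimal graphons, which is non-empty by Lemma~\ref{lem:altDef}. A routine argument using Theorem~\ref{th:LS} together with continuity of $t(H,\cdot)$ shows that $O$ is sequentially compact under graphon convergence. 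I would then set $\lambda:=\lambda^*$ and seek $W_1,W_2\in O$ with $F(W_1)\ge 0$ and $F(W_2)\le 0$; conditions (\ref{eq:ralpha}) and (\ref{eq:balpha}) are automatic from $W_1,W_2\in O$, while (\ref{eq:rbigger}) and (\ref{eq:bbigger}) are exactly the sign conditions on $F$.

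The main obstacle, and the heart of the proof, is a perturbation argument showing that both sign patterns must occur in $O$. Suppose for contradiction that $F(W)>0$ for every $W\in O$; compactness of $O$ and continuity of $F$ force $\delta:=\min_{W\in O}F(W)>0$. For each small $\epsilon>0$, Lemma~\ref{lem:altDef} provides a $(\lambda^*+\epsilon)$-optimal graphon $W_\epsilon$, and any subsequential limit $W^*$ (Theorem~\ref{th:LS}) must satisfy $g_{W^*}(\lambda^*)=\alpha$ by continuity of $t$ and of $c_\lambda$ in $\lambda$ (Lemma~\ref{lem:cont}), hence $W^*\in O$. Along a suitable subsequence, therefore, $F(W_\epsilon)>\delta/2$, and expanding linearly gives
\[
c_{\lambda^*+\epsilon}(H_1,H_2)=g_{W_\epsilon}(\lambda^*+\epsilon)=g_{W_\epsilon}(\lambda^*)+\epsilon F(W_\epsilon)\ge\alpha+\tfrac{\epsilon\delta}{2},
\]
contradicting the maximality of $c_{\lambda^*}$. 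A symmetric perturbation with $\epsilon<0$ (allowed because $\lambda^*>0$) rules out $F<0$ on all of $O$, producing the required $W_1$ and $W_2$ and completing the proof.
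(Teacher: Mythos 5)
Your proof is correct, and the forward direction (an $\alpha$-certificate implies $c(H_1,H_2)\le\alpha$) coincides with the paper's almost verbatim. For the reverse direction your argument is organized genuinely differently from the paper's. The paper takes a sequence $\lambda_n\to\lambda^*$ from the right, lets $W_{2,n}$ be $\lambda_n$-optimal, and then derives $t(H_1,W_{2,n})\le t(H_2,1-W_{2,n})$ \emph{directly} from the short chain of inequalities
\[
c_{\lambda^*}(H_1,H_2)\le g_{W_{2,n}}(\lambda^*)=g_{W_{2,n}}(\lambda_n)+(\lambda^*-\lambda_n)F(W_{2,n})=c_{\lambda_n}(H_1,H_2)+(\lambda^*-\lambda_n)F(W_{2,n})\le c_{\lambda^*}(H_1,H_2)+(\lambda^*-\lambda_n)F(W_{2,n}),
\]
which forces $F(W_{2,n})\le0$ since $\lambda_n>\lambda^*$; it then passes to a subsequential limit. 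You instead argue by contradiction: you introduce the set $O$ of $\lambda^*$-optimal graphons, observe it is sequentially compact and that $F$ is continuous, and use the perturbation to show that $F>0$ on all of $O$ (or $F<0$ on all of $O$) would contradict the maximality of $\lambda^*$. Your route needs the extra (true but unremarked-upon) facts that $O$ is closed under graphon convergence and that $W\mapsto t(H_2,1-W)$ is continuous, so that $\delta:=\min_O F$ is attained and positive; the paper avoids ever forming $O$ or minimizing over it, making its calculation marginally lighter. What your version buys is a cleaner conceptual picture of the underlying compactness, and it isolates the same sign dichotomy on the optimizer set that makes the certificate exist. Both are valid; just be sure, if you write this up in full, to spell out why $O$ is closed and why $F$ is continuous (the latter via inclusion--exclusion expansion of $t(H_2,1-W)$ in terms of $t(H',W)$ for $H'\subseteq H_2$).
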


\begin{proof}
First, we show that, if there exists an $\alpha$-certificate $(W_1,W_2)$ for $(H_1,H_2)$, then $c(H_1,H_2)\leq \alpha$. Let $\lambda\in [0,2]$ so that \eqref{eq:ralpha} and \eqref{eq:balpha} both hold and let $\gamma\in[0,2]$. First, if $\gamma\in [0,\lambda]$, then
\[c_\gamma(H_1,H_2) \leq \gamma\cdot t(H_1,W_1) + (2-\gamma)\cdot t(H_2,1-W_1)\]
\[=\lambda\cdot t(H_1,W_1) + (2-\lambda)\cdot t(H_2,1-W_1) + (\gamma-\lambda)\cdot \left(t(H_1,W_1) -t(H_2,1-W_1)\right)\]
which is at most $\alpha$ by \eqref{eq:ralpha}, \eqref{eq:rbigger} and the fact that $\gamma\leq \lambda$. The case $\gamma\in[\lambda,2]$ is analogous, but with $W_2$ in the place of $W_1$.

Now, we let $\alpha:=c(H_1,H_2)$ and show that there exists an $\alpha$-certificate for $(H_1,H_2)$. By Corollary~\ref{cor:max}, we can choose $\lambda\in[0,2]$ so that $c_\lambda(H_1,H_2)=c(H_1,H_2)$. By Proposition~\ref{prop:nonempty}, Remark~\ref{rem:swap}, Fact~\ref{fact:ch1h2isch2h1} and that $H_1$ and $H_2$ are non-empty, we can assume that $\lambda\in (0,2)$. Now, let $\lambda_n$ be a sequence in $(0,2)$ which converges to $\lambda$ from the right. For each $n$, by Lemma~\ref{lem:altDef}, we can let $W_{2,n}$ be a graphon such that
\[\lambda_n\cdot t(H_1,W_{2,n}) + (2-\lambda_n)\cdot t(H_2,1-W_{2,n})=c_{\lambda_n}(H_1,H_2).\] 
Note that
\[c_{\lambda}(H_1, H_2) \leq \lambda\cdot t(H_1,W_{2,n}) + (2-\lambda)\cdot t(H_2,1-W_{2,n})\] 
\[=\lambda_n\cdot t(H_1,W_{2,n}) + (2-\lambda_n)\cdot t(H_2,1-W_{2,n}) + (\lambda-\lambda_n)(t(H_1,W_{2,n}) - t(H_2,1-W_{2,n}))\]
\[=c_{\lambda_n}(H_1,H_2) + (\lambda-\lambda_n)(t(H_1,W_{2,n}) - t(H_2,1-W_{2,n}))\]
\[\leq c_\lambda(H_1,H_2)+ (\lambda-\lambda_n)(t(H_1,W_{2,n}) - t(H_2,1-W_{2,n})),\] where in the last line we used that $c_{\lambda_n}(H_1, H_2) \leq c(H_1, H_2) = c_{\lambda}(H_1, H_2)$.
Therefore, by definition of $c_\lambda(H_1,H_2)$ and the fact that $\lambda_n>\lambda$, we must have that $t(H_2,1-W_{2,n})\geq t(H_1,W_{2,n})$. By Theorem~\ref{th:LS}, we can take a subsequence $(W_{2,n_k})_{k=1}^\infty$ of $(W_{2,n})_{n=1}^\infty$ that converges to a graphon $W_{2}$. By convergence of $(W_{2,n_k})_{k=1}^\infty$ and by Lemma~\ref{lem:cont}, we have
\[\lambda\cdot t(H_1,W_2)+(2-\lambda)\cdot t(H_2,1-W_2) = \lim_{k\to\infty}c_{\lambda_{n_k}}(H_1,H_2) = c_\lambda(H_1,H_2).\]
Since $t(H_2,1-W_{2,n})\geq t(H_1,W_{2,n})$ for all $n$, we must have $t(H_2,1-W_{2})\geq t(H_1,W_{2})$, and so \eqref{eq:bbigger} holds. The construction of $W_1$ is similar, except that we take $(\lambda_n)_{n=1}^\infty$ to be a sequence converging to $\lambda$ from the left. 
\end{proof}

In light of Theorem~\ref{th:dual}, we make the following definition.

\begin{defn}
If $(W_1,W_2)$ is a $c(H_1,H_2)$-certificate for a pair $(H_1,H_2)$ of graphs, then we say that $(W_1,W_2)$ is an \emph{optimal certificate} for $(H_1,H_2)$.
\end{defn}

\section{Applying Duality}
\label{sec:applications}

Our next goal is to provide several applications of Theorem~\ref{th:dual}. We start with a proof of Theorem~\ref{th:c(H,H)=c(H)}.

\begin{proof}[Proof of Theorem~\ref{th:c(H,H)=c(H)}]
If $H$ is empty, then $t(H,W)=1$ for every graphon $W$, so $c(H)=2$ and $c(H,H)=2$. Hence we may assume that $H$ is non-empty.

By definition of $c(H,H)$ and Remark~\ref{rem:c_1(H,H)}, we have
\[c(H,H)\geq c_1(H,H)=c(H).\]
So, all that remains is to prove $c(H,H)\leq c(H)$. By Lemma~\ref{lem:altDef} and Remark~\ref{rem:c_1(H,H)}, we can let $W$ be a graphon such that
\[t(H,W) + t(H,1-W) = c_1(H,H)=c(H).\]
Without loss of generality, $t(H,W)\geq t(H,1-W)$; otherwise, simply replace $W$ with $1-W$. Thus, the pair $(W,1-W)$ is a $c(H)$-certificate for $(H,H)$ and so we are done by Theorem~\ref{th:dual}. 
\end{proof}

We now prove a general upper bound on $c(H_1,H_2)$ coming from random colourings in which each edge is red with density $p$ and blue with density $1-p$ and characterize the case of equality. The characterization involves the following off-diagonal generalization of the notion of common graphs introduced recently in~\cite{BehagueMorrisonNoel25}.

\begin{defn}
For $p\in (0,1)$, a pair $(H_1,H_2)$ of non-empty graphs is said to be \emph{$(p,1-p)$-common} if the following inequality holds for every graphon $W$:
\[\frac{t(H_1,W)}{e(H_1) p^{e(H_1)-1}} + \frac{t(H_2,1-W)}{e(H_2) (1-p)^{e(H_2)-1}}\geq \frac{p}{e(H_1)}+\frac{1-p}{e(H_2)}.\]
\end{defn}

\begin{thm}
\label{th:commonPair}
Let $H_1$ and $H_2$ be non-empty graphs and let $p\in(0,1)$ such that $p^{e(H_1)}=(1-p)^{e(H_2)}$. Then
\[c(H_1,H_2)\leq 2\cdot p^{e(H_1)}=2\cdot(1-p)^{e(H_2)}\]
with equality if and only if $(H_1,H_2)$ is $(p,1-p)$-common. 
\end{thm}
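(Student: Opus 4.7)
My plan is to obtain the upper bound by evaluating the relevant functional on the constant graphon $W\equiv p$, and to derive the equality characterization by identifying a single value of $\lambda$ pinned down by a first-order optimality condition.

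For the upper bound, take $W\equiv p$ in Lemma~\ref{lem:altDef}. Since $t(H,W)=p^{e(H)}$ on a constant graphon and $p^{e(H_1)}=(1-p)^{e(H_2)}$ by hypothesis,
\[ \lambda\cdot t(H_1,W)+(2-\lambda)\cdot t(H_2,1-W) = \lambda\, p^{e(H_1)}+(2-\lambda)(1-p)^{e(H_2)} = 2p^{e(H_1)}\]
for every $\lambda\in[0,2]$. Consequently $c_\lambda(H_1,H_2)\leq 2p^{e(H_1)}$ for all $\lambda$, and so $c(H_1,H_2)\leq 2p^{e(H_1)}$.

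For the characterization of equality, introduce
\[ \lambda^{*} := \frac{2\, e(H_2)(1-p)^{e(H_2)-1}}{e(H_1)p^{e(H_1)-1}+e(H_2)(1-p)^{e(H_2)-1}} \in (0,2),\]
chosen so that the positive quantity $K:=\lambda^{*}e(H_1)p^{e(H_1)-1}=(2-\lambda^{*})e(H_2)(1-p)^{e(H_2)-1}$ is well-defined. Multiplying the defining inequality of $(p,1-p)$-commonness by $K$ and simplifying using $p^{e(H_1)}=(1-p)^{e(H_2)}$ shows that it is equivalent to
\[ \lambda^{*}\cdot t(H_1,W)+(2-\lambda^{*})\cdot t(H_2,1-W) \geq 2p^{e(H_1)} \]
holding for every graphon $W$; that is, $(H_1,H_2)$ is $(p,1-p)$-common if and only if $c_{\lambda^{*}}(H_1,H_2)\geq 2p^{e(H_1)}$, which combined with the upper bound is the same as $c_{\lambda^{*}}(H_1,H_2)=2p^{e(H_1)}$. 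The ``$\Leftarrow$'' direction of the theorem follows immediately.

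For ``$\Rightarrow$'', assume $c(H_1,H_2)=2p^{e(H_1)}$. By Corollary~\ref{cor:max} some $\lambda^{\dagger}\in[0,2]$ satisfies $c_{\lambda^{\dagger}}(H_1,H_2)=2p^{e(H_1)}$, and the constant graphon $W\equiv p$ attains this value in the infimum of Lemma~\ref{lem:altDef}, hence minimizes the functional $W\mapsto\lambda^{\dagger} t(H_1,W)+(2-\lambda^{\dagger}) t(H_2,1-W)$. Restricting this functional to the family $W_t\equiv p+t$, which lies in $\mathcal{W}_0$ for $|t|$ small enough since $p\in(0,1)$, yields a smooth function $g(t):=\lambda^{\dagger}(p+t)^{e(H_1)}+(2-\lambda^{\dagger})(1-p-t)^{e(H_2)}$ minimized at $t=0$, so $g'(0)=0$. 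A short calculation reduces this first-order condition to $\lambda^{\dagger}e(H_1)p^{e(H_1)-1}=(2-\lambda^{\dagger})e(H_2)(1-p)^{e(H_2)-1}$, which forces $\lambda^{\dagger}=\lambda^{*}$, and the preceding paragraph then yields $(p,1-p)$-commonness. The main subtlety is precisely this uniqueness step: although we initially only know that \emph{some} $\lambda$ witnesses $c_\lambda=2p^{e(H_1)}$, the first-order calculation through constant graphons forces that value to coincide with the distinguished $\lambda^{*}$ associated to the $(p,1-p)$-common inequality.
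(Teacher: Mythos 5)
Your upper bound is correct and is actually a slight shortcut relative to the paper: you apply Lemma~\ref{lem:altDef} with the constant graphon $W\equiv p$ directly to conclude $c_\lambda(H_1,H_2)\le 2p^{e(H_1)}$ for every $\lambda$, rather than exhibiting a $2p^{e(H_1)}$-certificate and invoking Theorem~\ref{th:dual} as the paper does. Both are valid; yours avoids the dual machinery.

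Your equality characterization agrees with the paper on the algebra (your $\lambda^*$ is the paper's $\lambda_0$, and the reduction of the $(p,1-p)$-common inequality to $c_{\lambda^*}(H_1,H_2)\ge 2p^{e(H_1)}$ is the same computation), and the ``$\Leftarrow$'' direction is identical in spirit. The genuine departure is in ``$\Rightarrow$''. The paper proceeds contrapositively: assuming $(H_1,H_2)$ is not $(p,1-p)$-common, it builds an explicit $\alpha$-certificate with $\alpha<2p^{e(H_1)}$ (one graphon being a witness to failure of the commonness inequality, the other a perturbed constant $p-\varepsilon$) and applies Theorem~\ref{th:dual}. You instead argue directly: if $c(H_1,H_2)=2p^{e(H_1)}$, then by Corollary~\ref{cor:max} some $\lambda^\dagger$ attains the supremum, $W\equiv p$ is a minimizer for $c_{\lambda^\dagger}$, and a first-order variation along the constant family $W_t\equiv p+t$ forces $\lambda^\dagger e(H_1)p^{e(H_1)-1}=(2-\lambda^\dagger)e(H_2)(1-p)^{e(H_2)-1}$, i.e.\ $\lambda^\dagger=\lambda^*$. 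This stationarity argument is correct: $W\equiv p$ lies in the interior of $\mathcal{W}_0$, so the one-parameter perturbation is admissible, and the resulting linear equation in $\lambda^\dagger$ has a unique solution in $[0,2]$ which is precisely $\lambda^*$ (in particular $\lambda^\dagger$ cannot be $0$ or $2$, since $H_1,H_2$ are non-empty and $p\in(0,1)$ make both one-sided derivative terms nonzero). Your variational route pins down the optimal $\lambda$ cleanly and avoids constructing the perturbed certificate $W_2=p-\varepsilon$; the paper's route, in exchange, stays entirely within the certificate/dual framework it has set up and never needs to argue that the optimizing $\lambda$ is unique.
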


\begin{proof}
Let $H_1$ and $H_2$ be arbitrary non-empty graphs. For the upper bound, we show that the pair $(W_1,W_2)$ where $W_1=W_2=p$ is a $2p^{e(H_1)}$-certificate for $(H_1,H_2)$ and apply Theorem~\ref{th:dual}. Since $p^{e(H_1)}=(1-p)^{e(H_2)}$, we have
\[t(H_1,W_1)=t(H_1,W_2)=p^{e(H_1)}=(1-p)^{e(H_2)}=t(H_2,1-W_1)=t(H_2,1-W_2)\]
and so \eqref{eq:rbigger} and \eqref{eq:bbigger} both hold. Using the fact that $p^{e(H_1)}=(1-p)^{e(H_2)}$ again, for any $\lambda\in[0,2]$, we have
\[\lambda\cdot t(H_1,W_1) + (2-\lambda)\cdot t(H_2,1-W_1)\]
\[=\lambda\cdot t(H_1,W_2) + (2-\lambda)\cdot t(H_2,1-W_2)= \lambda\cdot p^{e(H_1)}+ (2-\lambda)(1-p)^{e(H_2)} = 2\cdot p^{e(H_1)}\]
and so \eqref{eq:ralpha} and \eqref{eq:balpha} hold for $\alpha=2p^{e(H_1)}$. Thus, $(W_1,W_2)$ is a $2p^{e(H_1)}$-certificate for $(H_1, H_2)$ and we are done with the upper bound. 

What remains is to characterize the case of equality. Define
\begin{equation}\label{eq:commonlambda}\lambda_0 :=2\left( \frac{e(H_2)(1-p)^{e(H_2)-1}}{e(H_1)p^{e(H_1)-1} + e(H_2)(1-p)^{e(H_2)-1}}\right)\end{equation}
and
\begin{equation}\label{eq:commonsigma}\sigma_0 := \frac{1}{2}\left(\frac{e(H_1)p^{e(H_1)-1} + e(H_2)(1-p)^{e(H_2)-1}}{\left(e(H_1)p^{e(H_1)-1}\right)\cdot\left(e(H_2)(1-p)^{e(H_2)-1}\right)}\right).\end{equation}
Note that
\begin{equation}\label{eq:lambdasigma1}\sigma_0\cdot\lambda_0 = \frac{1}{e(H_1)p^{e(H_1)-1}}\end{equation}
and
\begin{equation}\label{eq:lambdasigma2}\sigma_0\cdot(2-\lambda_0) = \frac{1}{e(H_2)(1-p)^{e(H_2)-1}}.\end{equation}
Therefore, $(H_1,H_2)$ is $(p,1-p)$-common if and only if 
\[\sigma_0\cdot\lambda_0\cdot t(H_1,W) + \sigma_0\cdot(2-\lambda_0)\cdot t(H_2,1-W)\geq \frac{p}{e(H_1)}+\frac{1-p}{e(H_2)}\]
or, equivalently, 
\[\lambda_0\cdot t(H_1,W) + (2-\lambda_0)\cdot t(H_2,1-W)\geq \frac{1}{\sigma_0}\left(\frac{p}{e(H_1)}+\frac{1-p}{e(H_2)}\right)\]
for every graphon $W$. Now, observe that 
\[\frac{1}{\sigma_0}\left(\frac{p}{e(H_1)}+\frac{1-p}{e(H_2)}\right) = \frac{2e(H_1)p^{e(H_1)-1}e(H_2)(1-p)^{e(H_2)-1}}{e(H_1)p^{e(H_1)-1} + e(H_2)(1-p)^{e(H_2)-1}}\left(\frac{p}{e(H_1)}+\frac{1-p}{e(H_2)}\right)\]
\[ = \frac{2e(H_1)p^{e(H_1)-1}e(H_2)(1-p)^{e(H_2)-1}}{e(H_1)p^{e(H_1)-1} + e(H_2)(1-p)^{e(H_2)-1}}\left(\frac{e(H_2)p+e(H_1)(1-p)}{e(H_1)e(H_2)}\right)\]
\[ = \frac{2e(H_2)p^{e(H_1)}(1-p)^{e(H_2)-1}+2e(H_1)p^{e(H_1)-1}(1-p)^{e(H_2)}}{e(H_1)p^{e(H_1)-1} + e(H_2)(1-p)^{e(H_2)-1}}\]
which, since $(1-p)^{e(H_2)}=p^{e(H_1)}$, is equal to $2\cdot p^{e(H_1)}$. So, $(H_1,H_2)$ is $(p,1-p)$-common if and only if 
\[\lambda_0\cdot t(H_1,W) + (2-\lambda_0)\cdot t(H_2,1-W)\geq2\cdot p^{e(H_1)}\]
for every graphon $W$. In particular, if $(H_1,H_2)$ is $(p,1-p)$-common, then
\[c(H_1,H_2)\geq c_{\lambda_0}(H_1,H_2)\geq 2\cdot p^{e(H_1)}\]
which proves one direction of the characterization. 

Now, suppose that $(H_1,H_2)$ is not $(p,1-p)$-common. Our goal is to find an $\alpha$-certificate for $(H_1,H_2)$ with $\alpha<2\cdot p^{e(H_1)}$. Using the characterization above, let $W$ be a graphon such that
\[
\lambda_0\cdot t(H_1,W)+(2-\lambda_0)\cdot t(H_2,1-W)<2\cdot p^{e(H_1)}.
\]
For $\gamma\in[0,2]$, define
\[
g_\gamma(x):=\gamma\cdot (p+x)^{e(H_1)}+(2-\gamma)\cdot(1-p-x)^{e(H_2)}.
\]
By the definition of $\lambda_0$, we have $g'_\gamma(0)>0$ when $\gamma>\lambda_0$ and $g'_\gamma(0)<0$ when $\gamma<\lambda_0$.

First suppose that $t(H_1,W)\geq t(H_2,1-W)$. Choose $\gamma>\lambda_0$ sufficiently close to $\lambda_0$ so that
\[
\gamma\cdot t(H_1,W)+(2-\gamma)\cdot t(H_2,1-W)<2\cdot p^{e(H_1)}.
\]
Set $W_1:=W$ and
\[
\alpha_1:=\gamma\cdot t(H_1,W_1)+(2-\gamma)\cdot t(H_2,1-W_1).
\]
Since $g'_\gamma(0)>0$, we may choose $\varepsilon>0$ sufficiently small so that $p-\varepsilon>0$ and $
g_\gamma(-\varepsilon)<g_\gamma(0)=2p^{e(H_1)}$. Set $W_2:=p-\varepsilon$ and $\alpha_2:=g_\gamma(-\varepsilon)$. Then
\[
t(H_1,W_2)=(p-\varepsilon)^{e(H_1)}<(1-p+\varepsilon)^{e(H_2)}=t(H_2,1-W_2).
\]
Thus $(W_1,W_2)$ is an $\alpha$-certificate with $\alpha:=\max\{\alpha_1,\alpha_2\}<2p^{e(H_1)}$. 

Now suppose instead that $t(H_1,W)<t(H_2,1-W)$. Choose $\gamma<\lambda_0$ sufficiently close to $\lambda_0$ so that
\[
\gamma\cdot t(H_1,W)+(2-\gamma)\cdot t(H_2,1-W)<2\cdot p^{e(H_1)}.
\]
Set $W_2:=W$ and
\[
\alpha_2:=\gamma\cdot t(H_1,W_2)+(2-\gamma)\cdot t(H_2,1-W_2).
\]
Since $g'_\gamma(0)<0$, we may choose $\varepsilon>0$ sufficiently small so that $p+\varepsilon<1$ and $g_\gamma(\varepsilon)<g_\gamma(0)=2p^{e(H_1)}$. Set $W_1:=p+\varepsilon$ and $\alpha_1:=g_\gamma(\varepsilon)$. Then
\[
t(H_1,W_1)=(p+\varepsilon)^{e(H_1)}>(1-p-\varepsilon)^{e(H_2)}=t(H_2,1-W_1).
\]
Thus $(W_1,W_2)$ is an $\alpha$-certificate with $\alpha:=\max\{\alpha_1,\alpha_2\}<2p^{e(H_1)}$. In both cases, Theorem~\ref{th:dual} gives $c(H_1,H_2)<2p^{e(H_1)}$, as desired.
\end{proof}

Theorem~\ref{th:Sidorenko} is now a consequence of Theorem~\ref{th:commonPair} and the following statement, which recently appeared in~\cite{BehagueMorrisonNoel25}. 

\begin{thm}
\label{th:SidCommon}
If $H_1$ and $H_2$ are non-empty and Sidorenko, then $(H_1,H_2)$ is $(p,1-p)$-common for all $p\in (0,1)$. 
\end{thm}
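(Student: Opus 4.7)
The plan is to reduce the $(p,1-p)$-common inequality to a one-variable convexity statement by using the Sidorenko hypothesis to lower bound $t(H_1,W)$ and $t(H_2,1-W)$ purely in terms of the edge density, and then verify the resulting inequality by calculus.

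More precisely, let $W$ be an arbitrary graphon, and write $e_1:=e(H_1)$, $e_2:=e(H_2)$, and $d:=t(K_2,W)$, so that $t(K_2,1-W)=1-d$. Since $H_1$ and $H_2$ are Sidorenko,
\[t(H_1,W)\geq d^{e_1}\qquad\text{and}\qquad t(H_2,1-W)\geq (1-d)^{e_2}.\]
It therefore suffices to show that for every $d\in[0,1]$,
\[f(d):=\frac{d^{e_1}}{e_1\,p^{e_1-1}}+\frac{(1-d)^{e_2}}{e_2\,(1-p)^{e_2-1}}\;\geq\;\frac{p}{e_1}+\frac{1-p}{e_2}.\]

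The next step is the one-variable calculus. I compute
\[f'(d)=\left(\frac{d}{p}\right)^{e_1-1}-\left(\frac{1-d}{1-p}\right)^{e_2-1},\qquad f''(d)=\frac{(e_1-1)d^{e_1-2}}{p^{e_1-1}}+\frac{(e_2-1)(1-d)^{e_2-2}}{(1-p)^{e_2-1}}\geq 0,\]
so $f$ is convex on $[0,1]$ (note $e_1,e_2\geq1$ since $H_1,H_2$ are non-empty). Moreover $f'(p)=1-1=0$, so $d=p$ is the unique minimizer, at which
\[f(p)=\frac{p^{e_1}}{e_1\,p^{e_1-1}}+\frac{(1-p)^{e_2}}{e_2\,(1-p)^{e_2-1}}=\frac{p}{e_1}+\frac{1-p}{e_2}.\]
Combining with the Sidorenko bounds above yields the required inequality, completing the proof.

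There is essentially no obstacle here: the argument is elementary once one recognizes that the normalization in the definition of $(p,1-p)$-common is exactly the one that makes the associated function of $d$ stationary at $d=p$. The only mild subtlety is handling the edge case $e_i=1$, where $f''$ involves $d^{-1}$ or $(1-d)^{-1}$; this is a removable issue since one can instead check convexity directly, or just observe that for $e_i=1$ the corresponding term of $f$ is linear and contributes $0$ to $f''$.
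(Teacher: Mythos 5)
Your proof is correct and follows essentially the same route as the paper: both reduce via the Sidorenko hypothesis to a one-variable inequality in $d=t(K_2,W)$ that is tight at $d=p$. The only cosmetic difference is that the paper finishes with Bernoulli's inequality applied to each term (avoiding the $e_i=1$ edge case you flag), whereas you verify the same fact by computing $f'$ and $f''$ directly.
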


\begin{proof}
Let $H_1$ and $H_2$ be Sidorenko, let $p\in (0,1)$ and let $W$ be a graphon. Define $x:=t(K_2,W)-p$. Then
\[\frac{t(H_1,W)}{e(H_1)p^{e(H_1)-1}}+\frac{t(H_2,1-W)}{e(H_2)(1-p)^{e(H_2)-1}}\geq\frac{(p+x)^{e(H_1)}}{e(H_1)p^{e(H_1)-1}}+\frac{(1-p-x)^{e(H_2)}}{e(H_2)(1-p)^{e(H_2)-1}}\]
\[=\frac{p^{e(H_1)}(1+xp^{-1})^{e(H_1)}}{e(H_1)p^{e(H_1)-1}}+\frac{(1-p)^{e(H_2)}(1-x(1-p)^{-1})^{e(H_2)}}{e(H_2)(1-p)^{e(H_2)-1}}\]
\[=\frac{p(1+xp^{-1})^{e(H_1)}}{e(H_1)}+\frac{(1-p)(1-x(1-p)^{-1})^{e(H_2)}}{e(H_2)}.\]
Using Bernoulli's Inequality (i.e. the fact that $(1+y)^r\geq 1+ry$ for all $r\geq 1$ and $y\geq-1$) on each term of the above expression and cancelling yields $\frac{p}{e(H_1)}+\frac{1-p}{e(H_2)}$, as desired.
\end{proof}

\begin{proof}[Proof of Theorem~\ref{th:Sidorenko}]
Let $H_1$ and $H_2$ be non-empty Sidorenko graphs and let $p\in (0,1)$ such that $p^{e(H_1)}=(1-p)^{e(H_2)}$. By Theorem~\ref{th:SidCommon}, $(H_1,H_2)$ is $(p,1-p)$-common. Thus, the result follows by Theorem~\ref{th:commonPair}.
\end{proof}

We now turn our attention to the upper bounds in Theorems~\ref{th:C5,B}--\ref{th:D,M}. The upper bound in Theorem~\ref{th:C5,B} follows easily from Theorem~\ref{th:commonPair}. 

\begin{prop}
\label{prop:C5,Bupper}
$c(C_5,B)\leq 1/16$.
\end{prop}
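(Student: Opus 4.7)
The plan is to apply Theorem~\ref{th:commonPair} directly. Note that $C_5$ and the banner graph $B$ both have exactly five edges: $e(C_5)=5$ and $e(B)=e(C_4)+1=5$. Therefore the equation $p^{e(C_5)}=(1-p)^{e(B)}$ reduces to $p^5=(1-p)^5$, whose unique solution in $(0,1)$ is $p=1/2$.

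Applying Theorem~\ref{th:commonPair} with this choice of $p$ yields
\[
c(C_5,B)\ \leq\ 2\cdot p^{e(C_5)}\ =\ 2\cdot(1/2)^5\ =\ \frac{1}{16},
\]
which is the desired bound. There is no real obstacle here: the result follows immediately once we verify the edge counts, and the certificate implicit in Theorem~\ref{th:commonPair} is simply the pair of constant graphons $(W_1,W_2)=(1/2,1/2)$, corresponding to the uniformly random red/blue colouring. (The matching lower bound, and in particular the equality $c(C_5,B)=c_1(C_5,B)=1/16$ asserted in Theorem~\ref{th:C5,B}, is the substantive content and will be handled later via flag algebras in Section~\ref{sec:flags}.)
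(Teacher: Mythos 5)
Your proof is correct and matches the paper's argument exactly: both observe $e(C_5)=e(B)=5$, deduce $p=1/2$, and apply Theorem~\ref{th:commonPair} to obtain $c(C_5,B)\leq 2\cdot(1/2)^5 = 1/16$.
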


\begin{proof}
The graphs $C_5$ and $B$ both have five edges; thus, $p^{e(C_5)}=(1-p)^{e(B)}$ if and only if $p=1/2$. So, by Theorem~\ref{th:commonPair}, $c(C_5,B)\leq 2(1/2)^5=1/16$.
\end{proof}

Next, we prove the upper bounds in Theorems~\ref{th:K3,C5} and~\ref{th:D,M}. In each case, the proof will simply boil down to exhibiting an $\alpha$-certificate $(W_1,W_2)$ for an appropriate choice of $\alpha$, verifying that it satisfies the properties outlined in Definition~\ref{defn:certificate} and concluding with an application of Theorem~\ref{th:dual}. After giving the examples, we will include some further discussion to provide insight into how we found them.

\begin{prop}
\label{prop:K3,C5upper}
$c(K_3,C_5)\leq 3/34$.
\end{prop}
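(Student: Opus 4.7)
My plan is to apply Theorem~\ref{th:dual} with $\lambda = 10/17$: to prove $c(K_3, C_5) \leq 3/34$, it suffices to exhibit a $(3/34)$-certificate $(W_1, W_2)$ for $(K_3, C_5)$ in the sense of Definition~\ref{defn:certificate}.

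For $W_2$ I take the balanced complete bipartite graphon, given by $W_2(x,y) = 1$ when exactly one of $x,y$ lies in $[0, 1/2]$ and $W_2(x,y) = 0$ otherwise. Since $W_2$ is bipartite, $t(K_3, W_2) = 0$, and since $1 - W_2$ is a disjoint union of two cliques of measure $1/2$, we have $t(C_5, 1 - W_2) = 2 \cdot (1/2)^5 = 1/16$. Then $(10/17) \cdot 0 + (24/17) \cdot (1/16) = 3/34$ verifies \eqref{eq:balpha}, and $0 \leq 1/16$ gives \eqref{eq:bbigger}.

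The harder part is to produce an appropriate $W_1$: a graphon satisfying $t(K_3, W_1) \geq t(C_5, 1 - W_1)$ together with $(10/17)\, t(K_3, W_1) + (24/17)\, t(C_5, 1-W_1) \leq 3/34$. I will take $W_1$ as an explicit step graphon---presumably a blow-up of a carefully chosen small graph, with possibly unequal part sizes or non-trivial within-part density. For such a step graphon, $t(K_3, W_1)$ is a weighted triangle count in the type graph, while $t(C_5, 1 - W_1) = \mathrm{tr}(M^5)$ for a small weighted adjacency matrix $M$ encoding $1 - W_1$ (with intra-part ``self-loops'' arising wherever $W_1 = 0$ inside a part), both straightforward to compute once $W_1$ is specified.

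The main obstacle is pinning down a successful $W_1$. Since $\lambda \mapsto c_\lambda(K_3, C_5)$ is a concave function of $\lambda$ whose maximum is attained at $\lambda = 10/17$, such a $W_1$ must act as a \emph{second} minimizer at $\lambda = 10/17$, distinct from the balanced bipartite graphon used for $W_2$; in particular the obvious candidates (two equal cliques, constant-density graphons at any $p \in (0,1)$, or balanced blow-ups of $\overline{C_7}$, $K_{n,n,n}$, the Petersen graph, or small Paley graphs) each fail one of the certificate inequalities. Once the correct $W_1$ is identified, verification of \eqref{eq:ralpha} and \eqref{eq:rbigger} reduces to direct computation, and Theorem~\ref{th:dual} then yields $c(K_3, C_5) \leq 3/34$.
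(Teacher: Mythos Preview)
Your overall strategy matches the paper's exactly: apply Theorem~\ref{th:dual} with $\lambda=10/17$ and exhibit a $(3/34)$-certificate $(W_1,W_2)$. Your choice of $W_2$ is also the same as the paper's (the paper writes it as $W_{K_2}$), and your verification of \eqref{eq:balpha} and \eqref{eq:bbigger} is correct.

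The gap is that you never actually produce $W_1$. You describe what shape it should have and list several candidates that fail, but the proof as written stops at ``once the correct $W_1$ is identified\ldots''. That is precisely the content of the proposition: without a concrete $W_1$ satisfying \eqref{eq:ralpha} and \eqref{eq:rbigger}, nothing has been proved. The paper's choice is $W_1 = W_{\overline{C_6}}$, the graphon of the complement of the $6$-cycle. Using the eigenvalues $3,-2,-2,1,0,0$ of the adjacency matrix of $\overline{C_6}$ one gets
\[
t(K_3,W_1)=\sum_i(\lambda_i/6)^3=\tfrac{1}{18},\qquad
t(C_5,1-W_1)=\sum_i(\mu_i/6)^5=\tfrac{17}{432},
\]
where $3,2,2,-1,0,0$ are the eigenvalues of the all-ones matrix minus the adjacency matrix of $\overline{C_6}$. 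Then $1/18>17/432$ gives \eqref{eq:rbigger}, and
\[
\tfrac{10}{17}\cdot\tfrac{1}{18}+\tfrac{24}{17}\cdot\tfrac{17}{432}=\tfrac{3}{34}
\]
gives \eqref{eq:ralpha}. Notably, $\overline{C_6}$ sits just outside the list of candidates you tested (you tried $\overline{C_7}$ but not $\overline{C_6}$); once you plug it in, your outlined verification goes through verbatim.
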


\begin{proof}
Define $\alpha=3/34$ and $\lambda=10/17$. Our goal is to find an $\alpha$-certificate $(W_1,W_2)$ for $(K_3,C_5)$. Let $W_2=W_{K_2}$. We have $t(K_3,W_2)=0$ and $t(C_5,1-W_2)=(1/2)^4$. Therefore, \eqref{eq:bbigger} holds. Also,
\[\lambda\cdot t(K_3,W_2) + (2-\lambda)\cdot t(C_5,1-W_2)=(10/17)\cdot 0 + (24/17)\cdot(1/16) = 3/34\]
and so \eqref{eq:balpha} holds. 

Now, let $W_1=W_{\overline{C_6}}$. A standard fact from algebraic graph theory is that, if $G$ is an $n$-vertex graph and $\lambda_1,\dots,\lambda_n$ are the eigenvalues of its adjacency matrix, then
\[t(C_k,G)=\sum_{i=1}^n\left(\frac{\lambda_i}{n}\right)^k\]
for any $k\geq3$; see, e.g.,~\cite[Example~5.11]{Lovasz12}. The eigenvalues of the adjacency matrix of $\overline{C_6}$ are $3,-2,-2,1,0,0$. Therefore, 
\[t(K_3,W_1)=(3/6)^3 + (-2/6)^3 + (-2/6)^3+(1/6)^3 = 1/18.\]
Similarly, 
\[t(C_5,1-W_1)=(3/6)^5 + (2/6)^5 + (2/6)^5+(-1/6)^5 = 17/432.\]
This is because the matrix obtained from subtracting the adjacency matrix of $\overline{C_6}$ from the $6\times 6$ all-ones matrix has eigenvalues $3,2,2,-1,0,0$. Note that $1/18>17/432$ and so \eqref{eq:rbigger} holds. Also, 
\[\lambda\cdot (1/18) + (2-\lambda)\cdot (17/432) = (10/17)\cdot (1/18)+(24/17)\cdot(17/432)=3/34.\]
So, \eqref{eq:ralpha} holds as well. The proposition now follows from Theorem~\ref{th:dual}.
\end{proof}

It is worth noting that the two tight colourings described in the proof of Proposition~\ref{prop:K3,C5upper} seem to be far from the only such colourings. For example, if $H$ is the $8$-vertex graph obtained from $\overline{C_6}$ by duplicating two adjacent vertices that are not in a common triangle, then $(10/17)t(K_3,W_H) + (24/17)t(C_5,1-W_H)=3/34$ as well; see Figure~\ref{fig:anotherExample}.

\begin{figure}[htbp]
\begin{center}
\begin{tikzpicture}[scale=0.6]
 \node[circle,fill,inner sep=1.5pt, minimum size=1mm] (A) at (0:1.4) {};
 \node[circle,fill,inner sep=1.5pt, minimum size=1mm] (B) at (60:1.4) {};
 \node[circle,fill,inner sep=1.5pt, minimum size=1mm] (C) at (120:1.4) {};
 \node[circle,fill,inner sep=1.5pt, minimum size=1mm] (D) at (180:1.4) {};
 \node[circle,fill,inner sep=1.5pt, minimum size=1mm] (E) at (240:1.4) {};
 \node[circle,fill,inner sep=1.5pt, minimum size=1mm] (F) at (300:1.4) {};
 \draw (F) -- (A) -- (B)--(F);
 \draw (C)--(D)--(E)--(C);
 \draw(A)--(D);
 \draw(B)--(C);
 \draw(F)--(E);
 
 \node (name) at (0,-2.2) {$\overline{C_6}$};
\end{tikzpicture}\hspace{3em}
\begin{tikzpicture}[scale=0.6]
 \node[circle,fill,inner sep=1.5pt, minimum size=1mm] (A) at (345:1.4) {};
 \node[circle,fill,inner sep=1.5pt, minimum size=1mm] (A2) at (15:1.4) {};
 \node[circle,fill,inner sep=1.5pt, minimum size=1mm] (B) at (60:1.4) {};
 \node[circle,fill,inner sep=1.5pt, minimum size=1mm] (C) at (120:1.4) {};
 \node[circle,fill,inner sep=1.5pt, minimum size=1mm] (D) at (165:1.4) {};
 \node[circle,fill,inner sep=1.5pt, minimum size=1mm] (D2) at (195:1.4) {};
 \node[circle,fill,inner sep=1.5pt, minimum size=1mm] (E) at (240:1.4) {};
 \node[circle,fill,inner sep=1.5pt, minimum size=1mm] (F) at (300:1.4) {};
 \draw (F) -- (A) -- (B)--(F);
 \draw(F)--(A2)--(B);
 \draw (C)--(D)--(E)--(C);
 \draw(C)--(D2)--(E);
 \draw(A)--(D);
 \draw(A)--(D2);
 \draw(A2)--(D);
 \draw(A2)--(D2);
 \draw(B)--(C);
 \draw(F)--(E);
 
 \node (name) at (0,-2.2) {$H$};
\end{tikzpicture}
\end{center}
 \caption{The graph $\overline{C_6}$ used in the proof of Proposition~\ref{prop:K3,C5upper} and another tight example $H$.}
 \label{fig:anotherExample}
\end{figure}
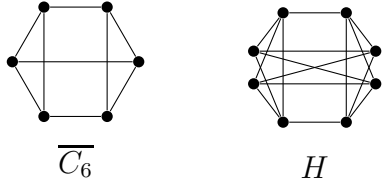

Next, we consider $D$ and $M$. In our analysis, the following result, often referred to as Goodman's Formula~\cite{Goodman59}, will be valuable. For $k\geq1$, let $P_k$ denote the path with $k$ vertices.

\begin{thm}[Goodman's Formula~\cite{Goodman59}]
\label{th:Goodman}
For any graphon $W$, $t(K_3,W)+t(K_3,1-W)$ is equal to
\[t(K_2,W)^3+t(K_2,1-W)^3+\frac{3}{2}\left(t(P_3,W)+t(P_3,1-W)-t(K_2,W)^2-t(K_2,1-W)^2\right).\]
\end{thm}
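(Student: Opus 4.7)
The plan is to prove the identity by expanding the trivial factorization $1 = W(x,y) + (1-W)(x,y)$ on each of the three edges of a triangle and integrating. Writing $U := 1-W$ for brevity, the constant $1$ on $[0,1]^3$ equals the product $(W(x,y)+U(x,y))(W(y,z)+U(y,z))(W(x,z)+U(x,z))$, which I will expand into eight terms indexed by the subsets of the three triangle edges assigned to $W$, and then integrate over $(x,y,z) \in [0,1]^3$.

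Two of the eight terms are ``pure'' and evaluate immediately to $t(K_3, W)$ and $t(K_3, U)$. The remaining six terms correspond to cherries: three in which two $W$-edges meet at a common vertex and a $U$-edge closes the triangle, and three more with the roles of $W$ and $U$ interchanged. I will argue, by relabelling $x, y, z$ and exploiting the symmetry of $W$ and $U$ as kernels, that all three two-$W$ cherry terms equal
\[\int_{[0,1]^3} W(x,y) W(y,z) (1-W(x,z))\, dx\, dy\, dz = t(P_3, W) - t(K_3, W),\]
and similarly each two-$U$ cherry term equals $t(P_3, U) - t(K_3, U)$. Summing the eight contributions and rearranging will then yield the simplified identity
\[t(K_3, W) + t(K_3, U) = \tfrac{3}{2}\bigl(t(P_3, W) + t(P_3, U)\bigr) - \tfrac{1}{2}.\]

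To match the precise statement of the theorem, I will let $p := t(K_2, W)$ so that $t(K_2, U) = 1-p$, and verify by a short algebraic manipulation (using $p + (1-p) = 1$) that $p^3 + (1-p)^3 - \tfrac{3}{2}(p^2 + (1-p)^2) = -\tfrac{1}{2}$. Adding this constant to both sides of the simplified identity gives exactly the formula claimed in the theorem. The only step requiring genuine care is the symmetry argument for the cherry terms: I must confirm that the three distinct edge-labellings of a cherry really do produce equal integrals after a change of variables. This is the one place where careful bookkeeping is needed, though the underlying calculation is routine; the remaining algebra is straightforward.
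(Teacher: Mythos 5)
The paper states Goodman's Formula as a cited result (\cite{Goodman59}) without giving a proof, so there is no internal argument to compare against. Your proposed proof is correct and is the standard elementary derivation: expanding $1=\bigl(W(x,y)+U(x,y)\bigr)\bigl(W(y,z)+U(y,z)\bigr)\bigl(W(x,z)+U(x,z)\bigr)$ with $U:=1-W$ and integrating over $[0,1]^3$ gives eight terms, the two pure ones are $t(K_3,W)$ and $t(K_3,U)$, and each mixed term is a cherry. The symmetry step you flag as needing care does go through: permuting the three integration variables maps any one of the three two-$W$ cherry labellings onto any other (using $W(x,y)=W(y,x)$ where needed), so each contributes $\int W(x,y)W(y,z)\bigl(1-W(x,z)\bigr)\,dx\,dy\,dz = t(P_3,W)-t(K_3,W)$, with $P_3$ the three-vertex path as the paper defines it; the two-$U$ terms are handled identically. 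Summing gives $1 = t(K_3,W)+t(K_3,U)+3\bigl(t(P_3,W)-t(K_3,W)\bigr)+3\bigl(t(P_3,U)-t(K_3,U)\bigr)$, hence $t(K_3,W)+t(K_3,U)=\tfrac32\bigl(t(P_3,W)+t(P_3,U)\bigr)-\tfrac12$. Finally, with $p=t(K_2,W)$ one has $t(K_2,1-W)=1-p$, and the identity $p^3+(1-p)^3-\tfrac32\bigl(p^2+(1-p)^2\bigr)=(1-3p+3p^2)-(\tfrac32-3p+3p^2)=-\tfrac12$ confirms that the stated formula and your simplified form are the same. No gaps.
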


\begin{prop}
\label{prop:D,Mupper}
$c(D,M)\leq 1/36$.
\end{prop}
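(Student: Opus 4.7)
The plan is to apply Theorem~\ref{th:dual} with $\alpha = 1/36$, producing a single graphon $W^\ast$ and using the pair $(W_1, W_2) = (W^\ast, W^\ast)$. For this pair to form a $(1/36)$-certificate with some $\lambda \in [0,2]$, conditions \eqref{eq:rbigger} and \eqref{eq:bbigger} together force $t(D, W^\ast) = t(M, 1-W^\ast)$, and then the expression $\lambda \cdot t(D, W^\ast) + (2-\lambda) \cdot t(M, 1-W^\ast) = 2\cdot t(D, W^\ast)$ is independent of $\lambda$. Matching the desired $\alpha = 1/36$ forces
\[
t(D, W^\ast) = t(M, 1-W^\ast) = \frac{1}{72},
\]
which is precisely the content of the preceding remark that a single colouring suffices for all $\lambda$ simultaneously.

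The central task is therefore to produce an explicit $W^\ast$ realising both hom-densities equal to $1/72$. Once such $W^\ast$ is in hand, $t(D, W^\ast)$ is a finite hom-count into the underlying weighted graph, while $t(M, 1-W^\ast)$ admits the useful decomposition
\[
t(M, U) = \iint U(x,y)\, \tau_U(x)\, \tau_U(y)\, dx\, dy,
\qquad
\tau_U(x) := \iint U(x,y)\, U(x,z)\, U(y,z)\, dy\, dz,
\]
obtained by splitting $M$ at its connecting edge; here $\tau_U(x)$ is the density of triangles of $U$ rooted at $x$. Goodman's formula (Theorem~\ref{th:Goodman}), stated immediately above this proposition, likely enters via the identity $\int \tau_U(x)\,dx = t(K_3, U)$: it controls $t(K_3, W) + t(K_3, 1-W)$ in terms of edge- and path-densities of $W$, and so constrains the ``triangle budget'' that $\tau_{1-W^\ast}$ distributes over $1-W^\ast$.

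The main obstacle is identifying $W^\ast$. It cannot be a constant graphon: Theorem~\ref{th:commonPair} with $p$ chosen so that $p^{5} = (1-p)^7$ would yield $2p^5 > 1/36$, showing that $(D,M)$ is not $(p, 1-p)$-common at this balanced $p$ and so any optimal construction must be genuinely non-random. One expects $W^\ast$ to be the graphon of a specific small graph (or of a carefully weighted blow-up), tuned so that the $D$-density in $W^\ast$ matches exactly the $M$-density in $1-W^\ast$. Once $W^\ast$ is produced and both densities are checked to equal $1/72$, conditions \eqref{eq:rbigger} and \eqref{eq:bbigger} hold with equality, and \eqref{eq:ralpha} and \eqref{eq:balpha} both reduce to $1/36 \leq 1/36$ for every $\lambda \in [0,2]$; thus $(W^\ast, W^\ast)$ is a $(1/36)$-certificate and Theorem~\ref{th:dual} yields $c(D, M) \leq 1/36$.
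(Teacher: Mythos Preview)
Your framework is exactly right and matches the paper's approach: exhibit a single graphon $W^\ast$ with $t(D,W^\ast)=t(M,1-W^\ast)=1/72$, so that $(W^\ast,W^\ast)$ is a $1/36$-certificate for every $\lambda$, and conclude via Theorem~\ref{th:dual}. You also correctly anticipate that Goodman's formula is used to control the triangle density in $1-W^\ast$, and that the $M$-density factors through rooted triangle densities.

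However, the proposal is not a proof: the entire content of the argument is the explicit construction of $W^\ast$, and you never produce it. Saying ``once $W^\ast$ is produced and both densities are checked'' is precisely the step that remains. The paper takes $W^\ast=W_K$ where $K$ is the tensor product $K_3\times K_4$, a $6$-regular vertex-transitive graph on $12$ vertices. One then computes $t(D,W_K)=1/72$ directly (each edge has exactly two common neighbours), and uses $6$-regularity together with Goodman's formula to get $t(K_3,1-W_K)=1/6$; vertex-transitivity then gives $t(M,1-W_K)=t(K_3,1-W_K)^2\cdot t(K_2,1-W_K)=1/72$. Without this (or some equivalent) explicit object and the accompanying density calculations, nothing has been proved. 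Your discussion of why a constant graphon cannot work is a useful sanity check, but it does not substitute for exhibiting the construction.
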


\begin{proof}
Define $\alpha=1/36$. Let $K$ be the tensor product of $K_3$ and $K_4$; that is, $K$ has $12$ vertices labelled $u_{i,j}$ for $1\leq i\leq 3$ and $1\leq j\leq 4$ where $u_{i,j}$ is adjacent to $u_{i',j'}$ if and only if $i\neq i'$ and $j\neq j'$; see Figure~\ref{fig:tensor}. We show that $(W_K,W_K)$ is an $\alpha$-certificate for $(D,M)$. 

First, we compute $t(D,K)=t(D,W_K)$. Consider the edge $u_{1,1}u_{2,2}$. If $u_{i,j}$ is a common neighbour of $u_{1,1}$ and $u_{2,2}$, then $i$ must be $3$ and $j$ must be $3$ or $4$. Thus, there are precisely two such common neighbours. Let $x$ and $y$ be the two vertices of degree $3$ in $D$. The number of homomorphisms from $D$ to $K$ such that $f(x)=u_{1,1}$ and $f(y)=u_{2,2}$ is precisely the number of ways to choose an ordered pair of common neighbours of these two vertices with replacement; thus, it is $2^2=4$. Since the automorphism group of $K$ acts transitively on pairs of adjacent vertices, we get that
\[t(D,K) = \frac{\hom(D,K)}{12^4} = \frac{2\cdot |E(K)|\cdot 4}{12^4} = \frac{6\cdot 12\cdot 4}{12^4}=\frac{1}{72}.\]

Now, let us compute $t(M,1-W_K)$. Since $K$ is $6$-regular, we get that 
\[t(K_2,W_K)=t(K_2,1-W_K)=1/2\]
and
\[t(P_3,W_K)=t(P_3,1-W_K)=1/4.\]
Therefore, by Goodman's Formula, $t(K_3,1-W_K) = 1/4 - t(K_3,W_K)$. Using the observations from the previous paragraph, we get that $t(K_3,W_K)=\frac{6\cdot 12\cdot 2}{12^3}=\frac{1}{12}$. Therefore, $t(K_3,1-W_K)=1/6$. Finally, since $K$ is vertex-transitive and $M$ consists of two triangles and a $K_2$ glued together on single vertices, we have
\[t(M,1-W_K) = t(K_3,1-W_K)\cdot t(K_2,1-W_K)\cdot t(K_3,1-W_K)=1/72.\]
Thus, if $W_1=W_2=W_K$, then $(W_1,W_2)$ satisfies all of the conditions of Definition~\ref{defn:certificate} (for any $\lambda\in [0,2]$) and so we are done.
\end{proof}

\begin{figure}[htbp]
\begin{center}
\begin{tikzpicture}[scale=0.6]
 \node[circle,fill,inner sep=1.5pt, minimum size=1mm] (A1) at (337.5:2.5) {};
 \node[circle,fill,inner sep=1.5pt, minimum size=1mm] (A2) at (0:2.5) {};
 \node[circle,fill,inner sep=1.5pt, minimum size=1mm] (A3) at (22.5:2.5) {};
 \node[circle,fill,inner sep=1.5pt, minimum size=1mm] (B1) at (67.5:2.5) {};
 \node[circle,fill,inner sep=1.5pt, minimum size=1mm] (B2) at (90:2.5) {};
 \node[circle,fill,inner sep=1.5pt, minimum size=1mm] (B3) at (112.5:2.5) {};
 \node[circle,fill,inner sep=1.5pt, minimum size=1mm] (C1) at (157.5:2.5) {};
 \node[circle,fill,inner sep=1.5pt, minimum size=1mm] (C2) at (180:2.5) {};
 \node[circle,fill,inner sep=1.5pt, minimum size=1mm] (C3) at (202.5:2.5) {};
 \node[circle,fill,inner sep=1.5pt, minimum size=1mm] (D1) at (247.5:2.5) {};
 \node[circle,fill,inner sep=1.5pt, minimum size=1mm] (D2) at (270:2.5) {};
 \node[circle,fill,inner sep=1.5pt, minimum size=1mm] (D3) at (292.5:2.5) {};
 \draw (A1) -- (B2);
 \draw (A1) -- (B3);
 \draw (A1) -- (C2);
 \draw (A1) -- (C3);
 \draw (A1) -- (D2);
 \draw (A1) -- (D3);
 \draw (A2) -- (B1);
 \draw (A2) -- (B3);
 \draw (A2) -- (C1);
 \draw (A2) -- (C3);
 \draw (A2) -- (D1);
 \draw (A2) -- (D3);
 \draw (A3) -- (B2);
 \draw (A3) -- (B1);
 \draw (A3) -- (C2);
 \draw (A3) -- (C1);
 \draw (A3) -- (D2);
 \draw (A3) -- (D1);
 
 \draw (B1) -- (C2);
 \draw (B1) -- (C3);
 \draw (B1) -- (D2);
 \draw (B1) -- (D3);
 \draw (B2) -- (C1);
 \draw (B2) -- (C3);
 \draw (B2) -- (D1);
 \draw (B2) -- (D3);
 \draw (B3) -- (C2);
 \draw (B3) -- (C1);
 \draw (B3) -- (D2);
 \draw (B3) -- (D1);
 
 \draw (C1) -- (D2);
 \draw (C1) -- (D3);
 \draw (C2) -- (D1);
 \draw (C2) -- (D3);
 \draw (C3) -- (D2);
 \draw (C3) -- (D1);
\end{tikzpicture}
\end{center}
 \caption{The graph $K$ used in the proof of Proposition~\ref{prop:D,Mupper}.}
 \label{fig:tensor}
\end{figure}
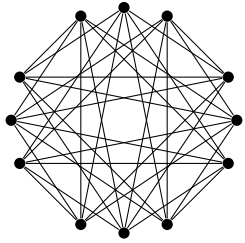

We now explain how we found the constructions in the previous two propositions. In both cases, the first step was to apply the flag algebra method to obtain a lower bound on $c(H_1,H_2)$. An introduction to this approach will be given in the next section but, for now, let us just say that this involved solving, by computer, a certain semidefinite program aimed at maximizing a variable, say $t$, which provides a lower bound on $c(H_1,H_2)$. At first glance, one may wonder about how to choose the right value of $\lambda$ to enter into this optimization problem. However, as it turns out, one can formulate the semidefinite program so that $\lambda$ is one of the variables, and the computer aims to maximize, over all choices of $\lambda$, a variable $t$ satisfying $t\leq c_\lambda(H_1,H_2)$. That is, the output of the optimization problem not only gives us a lower bound on the balanced Ramsey multiplicity constant, but it also points us toward what the SDP solver ``believes'' to be the best possible $\lambda$. 

After obtaining a candidate for $\lambda$ and a lower bound of the form $c_\lambda(H_1,H_2)\geq \alpha$ from flag algebras, where $\alpha$ is the optimal value of $t$, we started to search for an $\alpha$-certificate to get the matching upper bound. For the pair $(K_3,C_5)$, the certificate used in Proposition~\ref{prop:K3,C5upper} is very simple; it is just a pair of graphons corresponding to two graphs on $2$ and $6$ vertices, respectively. The first graphon was easy to guess and the second was found quickly via an exhaustive computer search through all graphs on small numbers of vertices. 

For the pair $(D,M)$, the search for the certificate went somewhat differently. By analyzing the dual of the semidefinite program in the flag algebra calculation, we were able to determine various constraints that a graphon in a $1/36$-certificate is likely to satisfy; e.g., we found values of the homomorphism densities of $K_2$, $K_3$, etc in a theoretical tight example. Moreover, the dual of the SDP suggested that the tight construction should be rather symmetric; e.g. each vertex should have the same degree, be contained in the same number of triangles, etc. By analyzing the dual further and doing a bit of numerological guesswork, we felt that there was a good chance that the optimal certificate involved a graphon corresponding to a graph on 12 vertices. However, all of these observations needed to be taken with a grain of salt due to the fact that an optimal certificate may have involved a pair of graphons $W_1$ and $W_2$ which may have different subgraph densities. 

Given these insights, we ran a heuristic search on the set of $6$-regular graphs on $12$ vertices in an attempt to find such a certificate. The heuristic search started with a random such graph $G$ generated via the configuration model and applied random ``switchings;'' i.e. replacing edges $u_1v_1$ and $u_2v_2$ with $u_1v_2$ and $u_2v_1$ for four vertices $u_1,u_2,v_1,v_2$ such that the edges $u_1v_1$ and $u_2v_2$ are currently present in $G$ and $u_1v_2$ and $u_2v_1$ are not. If the switching decreased the objective function $(5/6)t(D,W_G)+(7/6)t(M,1-W_G)$, then we kept it; if not, then we undid it. The search concluded with the construction in Proposition~\ref{prop:D,Mupper} rather quickly. It seems that the fact that the optimal certificate consists of only a single graphon in this case was very lucky, and it made it much easier to use information from the dual of the SDP to find it. 

\section{Flag Algebra Lower Bounds}
\label{sec:flags}

In this section, we apply the flag algebra method of Razborov~\cite{Razborov07} to complete the proofs of Theorems~\ref{th:C5,B}--\ref{th:D,M}. Let us begin by stating the key definitions and lemmas that we will need. All of the lemmas stated here are very standard; proofs of these statements, or statements very similar to them, can be found in, e.g.,~\cite{Lovasz12,Razborov07}. See~\cite[Section~5]{BehagueMorrisonNoel25} for a gentle introduction to the method which contains proofs of similar statements in a more restricted setting.

\begin{defn}
The \emph{induced homomorphism density} of a graph $J$ in a graphon $W$ is
\[t_{\ind}(J,W):=\int_{[0,1]^{V(J)}}\prod_{uv\in E(J)}W(x_u,x_v)\prod_{uv\in E(\overline{J})}(1-W(x_u,x_v))\prod_{v\in V(J)}dx_v.\]
\end{defn}

\begin{defn}
For a graph $J$ and a graphon $W$, define $d(J,W):=\frac{v(J)!}{\aut(J)}\cdot t_{\ind}(J,W)$, where $\aut(J)$ is the number of automorphisms of $J$.
\end{defn}

\begin{lem}
\label{lem:sumToOne}
For any positive integer $\ell$ and graphon $W$,
\[\sum_{J:v(J)=\ell}d(J,W)=1\]
where the sum is over all graphs $J$ on $\ell$ vertices up to isomorphism. 
\end{lem}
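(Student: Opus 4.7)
The plan is to reduce the sum over isomorphism classes to an integral of a product that collapses pointwise to $1$. Two bookkeeping ideas will drive the argument, both very standard.

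First, I would rewrite the coefficient $\frac{\ell!}{\aut(J)}$ as the number of labelings of $J$ on the vertex set $[\ell]$. More precisely, for each isomorphism class $J$ of graphs on $\ell$ vertices there are exactly $\frac{\ell!}{\aut(J)}$ distinct labeled graphs on $[\ell]$ whose unlabeled isomorphism type is $J$ (this is an orbit-stabilizer observation, applied to the natural action of the symmetric group $S_\ell$ on the set of labeled graphs with vertex set $[\ell]$). Since $t_{\ind}(J,W)$ depends only on the isomorphism class of $J$, this yields
\[\sum_{J:v(J)=\ell}d(J,W)\;=\;\sum_{J:v(J)=\ell}\frac{\ell!}{\aut(J)}\,t_{\ind}(J,W)\;=\;\sum_{J'}t_{\ind}(J',W),\]
where the final sum ranges over all labeled graphs $J'$ with vertex set $[\ell]$; equivalently, over all subsets $E'\subseteq\binom{[\ell]}{2}$.

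Second, I would unfold the definition of $t_{\ind}$, swap sum and integral by linearity (the integrand is bounded and the sum is finite, so this is unproblematic), and observe that for each fixed $(x_1,\dots,x_\ell)\in[0,1]^\ell$ the summand factorizes over pairs $ij\in\binom{[\ell]}{2}$:
\[\sum_{E'\subseteq\binom{[\ell]}{2}}\prod_{ij\in E'}W(x_i,x_j)\prod_{ij\notin E'}(1-W(x_i,x_j))\;=\;\prod_{ij\in\binom{[\ell]}{2}}\bigl(W(x_i,x_j)+(1-W(x_i,x_j))\bigr)\;=\;1.\]
Integrating $1$ over $[0,1]^\ell$ gives $1$, which is the desired identity.

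There is no real obstacle here; the only step that requires a tiny bit of care is the orbit-counting reduction in the first paragraph, and it is clean once one is precise about the distinction between isomorphism classes of graphs and labeled graphs on a fixed vertex set. Everything else is a direct computation.
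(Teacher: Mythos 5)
Your proof is correct, and it is the standard argument: convert the weighted sum over isomorphism classes into a sum over labeled graphs on $[\ell]$ via orbit--stabilizer, swap sum and integral, and observe that the integrand telescopes to $\prod_{ij}\bigl(W(x_i,x_j)+(1-W(x_i,x_j))\bigr)=1$ pointwise. The paper states this lemma without proof (deferring to standard references), so there is nothing to compare against beyond noting that your argument is precisely the one such references give.
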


\begin{defn}
Given graphs $H$ and $J$ with $v(H)\leq v(J)$, the \emph{injective homomorphism density} of $H$ in $J$, denoted $t_{\inj}(H,J)$, is the probability that a random injective function from $V(H)$ to $V(J)$ is a homomorphism. 
\end{defn}

\begin{lem}
\label{lem:tind}
For any graph $H$, integer $\ell\geq v(H)$ and graphon $W$, 
\[t(H,W)=\sum_{J: v(J)=\ell}t_{\inj}(H,J)\cdot d(J,W)\]
where the sum is over all graphs $J$ on $\ell$ vertices up to isomorphism. 
\end{lem}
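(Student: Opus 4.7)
The plan is to realize both sides of the identity as the probability of the same event in an appropriate two-step sampling procedure. Fix a graphon $W$ and an integer $\ell \geq v(H)$, and consider the random graph $G_\ell(W)$ on vertex set $\{1,\dots,\ell\}$ generated as follows: sample $\ell$ independent uniformly distributed points $x_1,\dots,x_\ell \in [0,1]$, and then, conditional on these points, include each pair $ij$ as an edge of $G_\ell(W)$ independently with probability $W(x_i,x_j)$.

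First I would verify that $d(J,W) = \Pr[G_\ell(W) \cong J]$ for every graph $J$ on $\ell$ vertices. The integral defining $t_{\ind}(J,W)$ is precisely the probability that, for a \emph{fixed} labelling of $V(J)$ by $\{1,\dots,\ell\}$, the random graph $G_\ell(W)$ equals that labelled copy of $J$. Since the $\ell!/\aut(J)$ distinct labellings are pairwise disjoint events whose union is the event $\{G_\ell(W) \cong J\}$, this probability equals $\frac{v(J)!}{\aut(J)} t_{\ind}(J,W) = d(J,W)$, as required.

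Next I would compute, in two ways, the probability of the following compound event: first generate $G_\ell(W)$, then pick an injective map $\phi \colon V(H) \to V(G_\ell(W))$ uniformly at random, and check whether $\phi$ is a homomorphism. Conditioning on the isomorphism type of $G_\ell(W)$ gives
\[
\sum_{J : v(J) = \ell} \Pr[G_\ell(W) \cong J] \cdot t_{\inj}(H,J) = \sum_{J : v(J) = \ell} d(J,W) \cdot t_{\inj}(H,J),
\]
which is the right-hand side of the claimed identity. Alternatively, by linearity I can swap the order of randomness: first pick the injection $\phi$ (equivalently, pick an ordered tuple of $v(H)$ distinct indices $(i_v)_{v\in V(H)}$ from $\{1,\dots,\ell\}$), then sample the points $x_1,\dots,x_\ell$, and then sample the edges. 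The conditional probability that $\phi$ is a homomorphism, given the $x_i$'s, is exactly $\prod_{uv \in E(H)} W(x_{i_u}, x_{i_v})$. Averaging over the $x_i$'s and using that the coordinates $x_{i_v}$ for $v \in V(H)$ are i.i.d.\ uniform on $[0,1]$ (since the $i_v$ are distinct), this average equals $t(H,W)$, independent of the choice of injection $\phi$.

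There is no real obstacle here; the only thing to be slightly careful about is the distinctness of the indices $i_v$ under the random injection, which is precisely what justifies replacing the integrand over $[0,1]^{V(H)}$ in $t(H,W)$ by an integral over $v(H)$ independent uniform samples from $\{x_1,\dots,x_\ell\}$. Equating the two evaluations of the compound probability yields the lemma.
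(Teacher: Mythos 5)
Your proof is correct. The paper itself does not prove Lemma~\ref{lem:tind}; it explicitly defers to standard references (Lov\'asz's book and Razborov's flag algebra paper), so there is no in-paper argument to compare against. Your route via the $W$-random graph $G_\ell(W)$ is a clean and standard way to establish it: the identification $d(J,W)=\Pr[G_\ell(W)\cong J]$ is right (the $\ell!/\aut(J)$ labelled copies are disjoint events, each of probability $t_{\ind}(J,W)$ by exchangeability of the sampled points), the conditional step $\Pr[\phi\text{ hom}\mid G_\ell(W)\cong J]=t_{\inj}(H,J)$ holds because the random injection $\phi$ is independent of $G_\ell(W)$ and its distribution is invariant under relabelling $\{1,\dots,\ell\}$, and the unconditional evaluation correctly reduces to $t(H,W)$ because the coordinates $x_{\phi(v)}$, $v\in V(H)$, are i.i.d.\ uniform once $\phi$ is injective. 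The hypothesis $\ell\geq v(H)$ is used exactly where it should be, to guarantee that an injection exists.
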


\begin{defn}
Let $F$ be a graph on vertex set $[k]:=\{1,\dots,k\}$ and $0\leq r\leq k$. Given a graphon $W$, define $t_{\ind,r}(F,W):[0,1]^{r}\to [0,1]$ by
\[t_{\ind,r}(F,W)(x_1,\dots,x_r)=\int_{[0,1]^{k-r}}\prod_{ij\in E(F)}W(x_i,x_j)\prod_{ij\in E(\overline{F})}(1-W(x_i,x_j))\prod_{i=r+1}^kdx_i.\]
\end{defn}

\begin{defn}
Given graphs $F_1,\dots,F_t$ on vertex set $[k]$, an integer $0\leq r\leq k$ and a graph $F$ on vertex set $[r]$, we say that $F_1,\dots,F_t$ are \emph{$F$-compatible} if the subgraph of $F_i$ induced by $[r]$ is equal to $F$ for all $1\leq i\leq t$. Less precisely, we say that $F_1,\dots,F_t$ are \emph{$r$-compatible} if they are $F$-compatible for some graph $F$ on vertex set $[r]$.
\end{defn}

\begin{defn}
Let $F$ be a graph on $[r]$ and let $F_1$ and $F_2$ be $F$-compatible graphs on $[k]$. Define $t_{\ind,r}(F_1\cdot F_2,W)(x_{1},\dots,x_r)$ to be equal to
\[\begin{cases}\displaystyle\frac{t_{\ind,r}(F_1,W)(x_{1},\dots,x_r)\cdot t_{\ind,r}(F_2,W)(x_{1},\dots,x_r)}{t_{\ind,r}(F,W)(x_1,\dots,x_r)} &\text{if } t_{\ind,r}(F,W)(x_1,\dots,x_r)\neq 0,\\ \displaystyle0 &\text{otherwise}.\end{cases}\]
\end{defn}

\begin{lem}
\label{lem:integrateOut}
Let $F$ be a graph on $[r]$, let $F_1$ and $F_2$ be $F$-compatible graphs on $[k]$ and let $\ell\geq2k-r$. Then there exist constants $a_r(F_1,F_2;J)$ for each graph $J$ such that $v(J)=\ell$ such that
\[\int_{[0,1]^r}t_{\ind,r}(F_1\cdot F_2,W)(x_1,\dots,x_r)dx_1\cdots dx_r=\sum_{J:v(J)=\ell}a_r(F_1,F_2;J)\cdot d(J,W)\]
for every graphon $W$, where the sum on the right side is over all graphs $J$ on $\ell$ vertices up to isomorphism. 
\end{lem}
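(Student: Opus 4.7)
The plan is to prove the identity via a probabilistic interpretation of both sides, matching each to the probability of a specific event in a natural sampling process.

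First, I would sample $X_1,\ldots,X_\ell$ independently and uniformly from $[0,1]$, and independently sample a random graph $G$ on $[\ell]$ by declaring each pair $ij$ to be an edge with probability $W(X_i,X_j)$. A standard calculation shows that $\Pr[G\cong J]=d(J,W)$ for every graph $J$ on $\ell$ vertices. Let $\phi_1:=G[[k]]$ viewed as a labelled graph on $[k]$, and let $\phi_2$ be the induced subgraph on $[r]\cup\{k+1,\ldots,2k-r\}$, relabelled via the order-preserving bijection so that it is also a labelled graph on $[k]$. The hypothesis $\ell\geq 2k-r$ guarantees that both vertex sets fit inside $[\ell]$.

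The first step is to establish
\[\int_{[0,1]^r} t_{\ind,r}(F_1\cdot F_2,W)(x_1,\ldots,x_r)\, dx_1\cdots dx_r = \Pr[\phi_1=F_1,\,\phi_2=F_2].\]
To prove this, I would condition on $X_1,\ldots,X_r$ and exploit conditional independence: given these values, the edges touching $\{r+1,\ldots,k\}$ are sampled independently from the edges touching $\{k+1,\ldots,2k-r\}$, while the edges within $[r]$ are shared between the two events. The $F$-compatibility of $F_1$ and $F_2$ forces $G[[r]]=F$ whenever $\phi_1=F_1$ and $\phi_2=F_2$ both occur, and the ratio defining $t_{\ind,r}(F_1\cdot F_2,W)$ is precisely designed so that dividing by $t_{\ind,r}(F,W)$ cancels the double-counted contribution of the edges within $[r]$. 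The $0/0$ convention is consistent because $t_{\ind,r}(F_1,W)$ and $t_{\ind,r}(F_2,W)$ both vanish on the set where $t_{\ind,r}(F,W)$ does, so the integrand is unambiguously $0$ there.

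Next I would write
\[\Pr[\phi_1=F_1,\,\phi_2=F_2]=\sum_{J:\,v(J)=\ell}\Pr[G\cong J]\cdot\Pr[\phi_1=F_1,\,\phi_2=F_2\mid G\cong J].\]
Since the distribution of $(X_1,\ldots,X_\ell)$ together with the edge samples is invariant under permuting the coordinates, conditional on $G\cong J$ the labelled version of $G$ is uniformly distributed over the $\ell!/\aut(J)$ labelled copies of $J$ on $[\ell]$. Therefore $a_r(F_1,F_2;J):=\Pr[\phi_1=F_1,\,\phi_2=F_2\mid G\cong J]$ is a purely combinatorial quantity, equal to the proportion of labelled copies of $J$ on $[\ell]$ whose induced subgraphs on $[k]$ and on $[r]\cup\{k+1,\ldots,2k-r\}$ equal $F_1$ and $F_2$ respectively. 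This constant depends only on $F_1,F_2,J,r,k,\ell$ and not on $W$; for graphons with $d(J,W)=0$ the corresponding term simply vanishes, so this combinatorial definition fixes the constants globally. Combining the two displays yields the desired expansion.

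The main obstacle will be making the conditional independence argument rigorous via Fubini, particularly handling the explicit cancellation in the definition of $t_{\ind,r}(F_1\cdot F_2,W)$ and the measure-zero exceptional set; the rest is bookkeeping.
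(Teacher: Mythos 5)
Your proof is correct, but it takes a different route than the one the paper gestures at. The paper does not actually prove this lemma; it cites the statement as standard and only illustrates, via the example $F_1=F_2=K_3$ with $r=1$, an algebraic way to generate the coefficients for $\ell=2k-r$: unfold the defining ratio into an integral over $[0,1]^{2k-r}$ of a product of $W$ and $1-W$ factors, multiply in the dummy terms $W(x_i,x_j)+(1-W(x_i,x_j))=1$ for the missing pairs, expand, and collect the resulting $t_{\ind}$ values before converting to $d(\cdot,W)$; a separate transfer identity $d(F,W)=\sum_{J:v(J)=\ell}d(F,J)\,d(J,W)$ then handles larger $\ell$. Your argument is instead a probabilistic one via the $W$-random graph $G$ on $[\ell]$, reading the identity as the law of total probability for the event $\{\phi_1=F_1,\ \phi_2=F_2\}$ conditioned on the isomorphism type of $G$. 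This handles every $\ell\geq 2k-r$ uniformly without invoking the transfer lemma, makes explicit the exchangeability that underlies why the constants are $W$-independent, and directly identifies $a_r(F_1,F_2;J)$ as the fraction of labelled copies of $J$ on $[\ell]$ whose induced subgraphs on $[k]$ and on $[r]\cup\{k+1,\dots,2k-r\}$ realize $F_1$ and $F_2$; you also correctly dispose of the $0/0$ convention by noting that $t_{\ind,r}(F_i,W)$ vanishes wherever $t_{\ind,r}(F,W)$ does. The paper's algebraic unfolding is better suited to the task the authors actually face---producing the numerical coefficients to feed into the SDP---but as a proof of the abstract existence statement, yours is cleaner and more conceptual, and the use of Fubini and conditional independence is routine rather than a genuine obstacle.
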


Let us illustrate the way in which one can compute the coefficients $a_r(F_1,F_2;J)$ in the previous lemma with an example. Let $F_1=F_2$ be the complete graph on $[3]$ and let $r=1$. Then, after a slight change of variables, we can write $\int_0^1t_{\ind,1}(F_1\cdot F_2,W)(x_1)dx_1$ as
\[\int_{[0,1]^5}\prod_{\substack{i,j\in\{1,2,3\}\\i<j}}W(x_i,x_j)\prod_{\substack{i,j\in\{1,4,5\}\\i<j}}W(x_i,x_j)dx_1\cdots dx_5\]
\[=\int_{[0,1]^5}\prod_{\substack{i,j\in\{1,2,3\}\\i<j}}W(x_i,x_j)\prod_{\substack{i,j\in\{1,4,5\}\\i<j}}W(x_i,x_j)\prod_{\substack{i\in\{2,3\}\\ j\in \{4,5\}}}(W(x_i,x_j)+(1-W(x_i,x_j)))dx_1\cdots dx_5.\]
If we expand the third product inside the integral and collect terms, we get
\begin{equation}\label{eq:tindJ}t_{\ind}(J_1,W)+4t_{\ind}(J_2,W)+2t_{\ind}(J_3,W)+4t_{\ind}(J_4,W)+4t_{\ind}(J_5,W)+t_{\ind}(J_6,W)\end{equation}
where $J_1,\dots,J_6$ are the following graphs:
\[\JOne\quad \JTwo \quad \JThree \quad \JFour \quad \JFive \quad \JSix.\]
Then, by definition of $d(J,W)$, \eqref{eq:tindJ} can be rewritten as
\[\frac{8}{120}d(J_1,W) + 4\cdot\frac{2}{120}d(J_2,W)+2\cdot\frac{8}{120}d(J_3,W)+ 4\cdot\frac{4}{120}d(J_4,W)+4\cdot\frac{12}{120}d(J_5,W)+\frac{120}{120}d(J_6,W).\]
Therefore, we have, e.g., $a_1(F_1,F_2;K_5)=a_1(F_1,F_2;J_6)=1$ and $a_1(F_1,F_2;\overline{K_5})=0$. This illustrates the way in which the coefficients in Lemma~\ref{lem:integrateOut} are computed for $\ell=2k-r$. The following lemma allows us to transfer the statement of Lemma~\ref{lem:integrateOut} from $\ell=2k-r$ to larger values of $\ell$. 

\begin{defn}
Given graphs $F$ and $J$ with $v(F)\leq v(J)$, let $d(F,J)$ be the probability that a random subset of $V(J)$ of cardinality $v(F)$ induces a subgraph of $J$ isomorphic to $F$. 
\end{defn}

\begin{lem}
For any graph $F$, integer $\ell\geq v(F)$ and graphon $W$,
\[d(F,W)=\sum_{J: v(J)=\ell}d(F,J)d(J,W)\]
where the sum is over all graphs $J$ on $\ell$ vertices up to isomorphism.
\end{lem}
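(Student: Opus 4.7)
The plan is to prove the identity by a direct probabilistic sampling argument. Given a graphon $W$ and integer $\ell\geq v(F)$, let $X_1,\dots,X_\ell$ be i.i.d.\ uniform random variables on $[0,1]$, and let $\mathbf{G}_\ell$ be the random graph on vertex set $[\ell]$ in which each edge $ij$ appears independently with probability $W(X_i,X_j)$ (conditional on the $X_i$'s). The first step is to interpret $d(\cdot,W)$ probabilistically: for any isomorphism class $J$ with $v(J)=\ell$, the definition of $t_{\ind}(J,W)$ gives the probability that $\mathbf{G}_\ell$ equals one fixed labelled representative of $J$ on $[\ell]$, and there are exactly $\ell!/\aut(J)=v(J)!/\aut(J)$ such labelled representatives, each contributing the same amount. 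Hence
\[d(J,W)=\Pr\!\left[\mathbf{G}_\ell\cong J\right],\]
which also matches the normalization of Lemma~\ref{lem:sumToOne}. The same interpretation applied at size $v(F)$ yields $d(F,W)=\Pr[\mathbf{G}_{v(F)}\cong F]$.

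Next, I would relate the two sample sizes by subsampling. Fix any subset $S\subseteq[\ell]$ with $|S|=v(F)$. Since the coordinates $(X_i)_{i\in S}$ are themselves i.i.d.\ uniform on $[0,1]$, the induced subgraph $\mathbf{G}_\ell[S]$ has the same distribution (up to relabelling its vertex set by the increasing bijection to $[v(F)]$) as $\mathbf{G}_{v(F)}$. In particular,
\[\Pr\!\left[\mathbf{G}_\ell[S]\cong F\right]=\Pr\!\left[\mathbf{G}_{v(F)}\cong F\right]=d(F,W),\]
for every such $S$.

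Finally, let $\mathbf{S}$ be a uniformly random $v(F)$-subset of $[\ell]$ chosen independently of the $X_i$'s. Averaging the previous equation over $\mathbf{S}$ preserves its value, so $d(F,W)=\Pr[\mathbf{G}_\ell[\mathbf{S}]\cong F]$. Conditioning on the isomorphism class of $\mathbf{G}_\ell$ and noting that, given $\mathbf{G}_\ell\cong J$, the conditional probability that a uniformly random $v(F)$-subset of $V(J)$ induces a copy of $F$ is exactly $d(F,J)$ by definition, the law of total probability yields
\[d(F,W)=\sum_{J:v(J)=\ell}\Pr\!\left[\mathbf{G}_\ell\cong J\right]\cdot d(F,J)=\sum_{J:v(J)=\ell}d(F,J)\,d(J,W),\]
as desired.

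There is no real obstacle; the only subtlety worth spelling out carefully is the bookkeeping behind the factor $v(J)!/\aut(J)$ that converts $t_{\ind}(J,W)$ from a density of labelled copies into the probability $\Pr[\mathbf{G}_\ell\cong J]$, since this is what makes the sampling interpretation line up cleanly with the statement. Once this identification is made, everything is just the law of total probability applied to the two-stage ``sample $\ell$ points, then subsample $v(F)$ of them'' procedure.
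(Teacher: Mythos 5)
Your proof is correct. The paper actually omits a proof of this lemma, citing it as standard; the probabilistic sampling argument you give — identify $d(J,W)$ with $\Pr[\mathbf{G}_\ell\cong J]$ for the $W$-random graph (the factor $v(J)!/\aut(J)$ being exactly the number of labelled graphs on $[\ell]$ in the isomorphism class of $J$), note exchangeability so that a fixed or uniformly random $v(F)$-subset of $[\ell]$ induces a $W$-random graph on $v(F)$ vertices, and then condition on the isomorphism type of $\mathbf{G}_\ell$ — is precisely the standard route taken in the references the paper points to. All steps are justified correctly, including the slightly delicate point that conditioning on the labelled outcome $\mathbf{G}_\ell=G'$ and then averaging over $G'\cong J$ legitimately gives $d(F,J)$ as the conditional subsampling probability, since $d(F,\cdot)$ is an isomorphism invariant.
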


Recall that a $t\times t$ matrix $A$ is \emph{positive semidefinite (PSD)} if it is real-valued and symmetric and all of its eigenvalues are non-negative. Equivalently, $A$ is PSD if 
\[\sum_{i=1}^t\sum_{j=1}^tA(i,j)x_ix_j\geq0\] 
for any $x_1,\dots,x_t\in\mathbb{R}$, where $A(i,j)$ denotes the entry on the $i$th row and $j$th column of $A$. Using the lemmas presented in this section, together with the second definition of PSD matrices, we derive the following lemma which is key to the main proofs in this section.

\begin{lem}
\label{lem:flag}
Let $H_1$ and $H_2$ be graphs, let $\lambda\in [0,2]$ and let $\ell\geq\max\{v(H_1),v(H_2)\}$. Let $m,r_1,\dots,r_m,k_1,\dots,k_m,t_1,\dots,t_m$ be positive integers and let $F^1,\dots,F^m$ be graphs. For $1\leq q\leq m$, suppose that $2k_q-r_q\leq\ell$ and $V(F^q)=[r_q]$, let $F^q_1,\dots,F^q_{t_q}$ be $F^q$-compatible graphs on vertex set $[k_q]$ and let $A_q$ be a $t_q\times t_q$ PSD matrix. Then $c_\lambda(H_1,H_2)$ is at least
\[\min_{J: v(J)=\ell}\left\{\lambda\cdot t_{\inj}(H_1,J)+(2-\lambda)\cdot t_{\inj}(H_2,\overline{J})-\sum_{q=1}^m\sum_{i=1}^{t_q}\sum_{j=1}^{t_q}A_q(i,j)a_{r_q}(F^q_i, F^q_j;J)\right\}\]
where the minimum is over all graphs $J$ on $\ell$ vertices up to isomorphism.
\end{lem}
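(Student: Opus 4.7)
The plan is to use Lemma~\ref{lem:altDef} to reduce everything to showing the claimed inequality for an arbitrary graphon $W$, and then to express all the ingredients in the common basis $\{d(J,W) : v(J) = \ell\}$. First, I would apply Lemma~\ref{lem:tind} to write $t(H_1,W) = \sum_{J : v(J) = \ell} t_{\inj}(H_1,J)\, d(J,W)$. For the complementary term, I would use the fact that $d(J, 1-W) = d(\overline{J}, W)$ (which follows directly from the definitions of $t_{\ind}$ and $d$, since swapping $W$ with $1-W$ swaps the roles of edges and non-edges), and then apply Lemma~\ref{lem:tind} again and reindex the sum by $\overline{J}$, obtaining $t(H_2, 1-W) = \sum_{J : v(J) = \ell} t_{\inj}(H_2, \overline{J})\, d(J,W)$.

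Next, I would handle the PSD contributions. For each $q$, consider the real-valued function
\[
\Phi_q(x) := \sum_{i=1}^{t_q}\sum_{j=1}^{t_q} A_q(i,j)\, t_{\ind,r_q}(F^q_i \cdot F^q_j, W)(x), \qquad x \in [0,1]^{r_q}.
\]
At every $x$ with $t_{\ind}(F^q, W)(x) \neq 0$, setting $y_i := t_{\ind,r_q}(F^q_i, W)(x)$ turns $\Phi_q(x)$ into $t_{\ind}(F^q,W)(x)^{-1} \sum_{i,j} A_q(i,j) y_i y_j$, which is non-negative because $A_q$ is PSD; on the measure-zero-relevant remaining set, $\Phi_q(x) = 0$ by definition. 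Hence $\int_{[0,1]^{r_q}} \Phi_q(x)\, dx \geq 0$. By linearity and Lemma~\ref{lem:integrateOut}, this integral equals $\sum_{J : v(J) = \ell} \bigl(\sum_{i,j} A_q(i,j) a_{r_q}(F^q_i, F^q_j; J)\bigr) d(J, W)$, using here the hypothesis $2k_q - r_q \leq \ell$ so that Lemma~\ref{lem:integrateOut} applies.

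Combining the three expansions, the quantity
\[
\lambda \cdot t(H_1, W) + (2 - \lambda) \cdot t(H_2, 1 - W) - \sum_{q=1}^m \int_{[0,1]^{r_q}} \Phi_q(x)\, dx
\]
can be written as $\sum_{J : v(J) = \ell} C_J\, d(J,W)$, where
\[
C_J := \lambda\, t_{\inj}(H_1, J) + (2-\lambda)\, t_{\inj}(H_2, \overline{J}) - \sum_{q=1}^m \sum_{i,j} A_q(i,j)\, a_{r_q}(F^q_i, F^q_j; J).
\]
Since each $\int \Phi_q(x)\,dx \geq 0$ and $d(J, W) \geq 0$ with $\sum_{J} d(J,W) = 1$ by Lemma~\ref{lem:sumToOne}, we conclude that $\lambda\, t(H_1, W) + (2-\lambda)\, t(H_2, 1-W) \geq \sum_J C_J\, d(J,W) \geq \min_J C_J$. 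Taking the infimum over graphons and invoking Lemma~\ref{lem:altDef} yields the stated bound on $c_\lambda(H_1, H_2)$.

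There is no single hard step here; this is a routine flag-algebra computation. The only places requiring care are the complementation identity $d(J, 1-W) = d(\overline{J}, W)$ used to rewrite $t(H_2, 1-W)$ in the same $d(J,W)$-basis as $t(H_1, W)$, and checking that the division by $t_{\ind}(F^q,W)(x)$ in the definition of $t_{\ind,r_q}(F^q_i \cdot F^q_j, W)$ does not spoil the PSD non-negativity argument (it does not, because the integrand is a non-negative quadratic form wherever the denominator is non-zero, and vanishes by convention otherwise).
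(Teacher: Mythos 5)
Your proof is correct and follows essentially the same route as the paper: expand all terms in the basis $\{d(J,W): v(J)=\ell\}$ via Lemma~\ref{lem:tind} and Lemma~\ref{lem:integrateOut}, use PSD positivity (via Fubini) to discard the flag contributions, and conclude from Lemma~\ref{lem:sumToOne} and Lemma~\ref{lem:altDef}. The only cosmetic differences are that you prove the inequality for an arbitrary graphon and then minimize, whereas the paper starts from a graphon attaining $c_\lambda(H_1,H_2)$, and that you spell out the complementation identity $d(J,1-W)=d(\overline{J},W)$ needed to express $t(H_2,1-W)$ in the same $d(J,W)$-basis, which the paper leaves implicit.
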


\begin{proof}
By Lemma~\ref{lem:altDef}, we can let $W$ be a graphon such that $\lambda\cdot t(H_1,W)+(2-\lambda)\cdot t(H_2,1-W)=c_\lambda(H_1,H_2)$. By Lemma~\ref{lem:tind},
\begin{equation}
\label{eq:tinjJ}
\begin{gathered}
c_\lambda(H_1,H_2)=\lambda\cdot t(H_1,W)+(2-\lambda)\cdot t(H_2,1-W)\\=\sum_{J:v(J)=\ell}\left(\lambda \cdot t_{\inj}(H_1,J)+(2-\lambda)\cdot t_{\inj}(H_2,\overline{J})\right)\cdot d(J,W).
\end{gathered}\end{equation}
Now, by Lemma~\ref{lem:integrateOut} and Fubini's Theorem, we can write
\[\sum_{J:v(J)=\ell}\sum_{q=1}^m\sum_{i=1}^{t_q}\sum_{j=1}^{t_q}A_q(i,j)a_{r_q}(F^q_i, F^q_j;J)d(J,W)\]
\[=\sum_{q=1}^m\int_{[0,1]^{r_q}}\sum_{i=1}^{t_q}\sum_{j=1}^{t_q}A_q(i,j)t_{\ind,r_q}(F^q_i\cdot F^q_j,W)(x_1,\dots,x_{r_q})dx_1\cdots dx_{r_q}.\]
Now, for fixed $1\leq q\leq m$, by definition of $t_{\ind,r_q}(F^q_i\cdot F^q_j,W)(x_1,\dots,x_{r_q})$, for each choice of $(x_1,\dots,x_{r_q})\in [0,1]^{r_q}$, we have that $\sum_{i=1}^{t_q}\sum_{j=1}^{t_q}A_q(i,j)t_{\ind,r_q}(F^q_i\cdot F^q_j,W)(x_1,\dots,x_{r_q})$ is equal to zero if $t_{\ind,r_q}(F^q,W)(x_1,\dots,x_{r_q})=0$ and, otherwise, is equal to
\[\sum_{i=1}^{t_q}\sum_{j=1}^{t_q}\frac{A_q(i,j)t_{\ind,r_q}(F_i^q,W)(x_1,\dots,x_{r_q})\cdot t_{\ind,r_q}(F_j^q,W)(x_1,\dots,x_{r_q})}{t_{\ind,r_q}(F^q,W)(x_1,\dots,x_{r_q})}\]
which is non-negative because $A_q$ is PSD. Thus, 
\[
\sum_{J:v(J)=\ell}\sum_{q=1}^m\sum_{i=1}^{t_q}\sum_{j=1}^{t_q}
A_q(i,j)a_{r_q}(F^q_i, F^q_j;J)d(J,W)\geq0.
\]
Combining this with \eqref{eq:tinjJ}, we see that $c_\lambda(H_1,H_2)$ is at least
\[\sum_{J:v(J)=\ell}\left(\lambda\cdot t_{\inj}(H_1,J)+(2-\lambda)\cdot t_{\inj}(H_2,\overline{J})-\sum_{q=1}^m\sum_{i=1}^{t_q}\sum_{j=1}^{t_q}A_q(i,j)a_{r_q}(F^q_i, F^q_j;J)\right)d(J,W) \]
which, since the quantities $d(J,W)$ are non-negative and sum to one by Lemma~\ref{lem:sumToOne}, implies the conclusion of the lemma. 
\end{proof}

The lower bounds in Theorems~\ref{th:C5,B}--\ref{th:D,M} are all proven using Lemma~\ref{lem:flag}. We present these proofs in order of increasing complexity. The easiest one is Theorem~\ref{th:K3,C5}.

\begin{proof}[Proof of Theorem~\ref{th:K3,C5}]
The upper bound was proven in Proposition~\ref{prop:K3,C5upper}. We apply Lemma~\ref{lem:flag} to show that $c_{10/17}(K_3,C_5)\geq 3/34$. In this application of Lemma~\ref{lem:flag}, we set $\ell=5$, $m=r_1=1$, $k_1=3$ and $t_1=6$. Let $F^1$ be the unique $1$-vertex graph and let $F^1_1,\dots,F^1_6$ be as follows:
\[\ZeroBlue \quad \OneBlue \quad \TwoBlue \quad \ZeroRed \quad \OneRed \quad \TwoRed\]
In each of the above diagrams, the square vertex is labelled $1$ and the round vertices are labelled $2$ and $3$ from left to right. Define
\[A_1:=\frac{15}{34}\begin{bmatrix}
 3 & 3 & 0 & -2 & -3 & -1\\
 3 & 8 & 5 & -6 & -8 & -2\\
 0 & 5 & 5 & -4 & -5 & -1\\
-2 & -6 & -4 & 5 & 6 & 1\\
-3 & -8 & -5 & 6 & 8 & 2\\
-1 & -2 & -1 & 1 & 2 & 1
\end{bmatrix}.\]
The matrix $A_1$ is PSD; the non-zero eigenvalues of $(34/15)\cdot A_1$ are approximately 25.36, 3.76 and 0.88. There are 34 graphs of order 5, up to isomorphism. For every such graph $J$, it holds that
\begin{equation}(10/17)\cdot t_{\inj}(K_3,J) + (24/17)\cdot t_{\inj}(C_5,\overline{J})-\sum_{i=1}^{6}\sum_{j=1}^{6}A_1(i,j)a_1(F^1_{i},F^1_{j}; J)\geq 3/34.\end{equation}
The calculations needed to verify this for all graphs $J$ on five vertices are tedious; the full list has been included in an appendix within an ancillary file of the arXiv version of the paper: \url{https://arxiv.org/src/2306.17388/anc/offDiagonalAppendices.pdf}. We include two sample calculations to illustrate how they work in principle. 

First, consider $J=K_5$. In this case, $a_1(F^1_{i},F^1_{j}; K_5)$ is equal to $0$ unless $i=j=6$, in which case it is equal to $1$. Also, clearly, $t_{\inj}(K_3,K_5)=1$ and $t_{\inj}(C_5,\overline{K_5})=0$. Therefore, 
\[(10/17)\cdot t_{\inj}(K_3,K_5) + (24/17)\cdot t_{\inj}(C_5,\overline{K_5})-\sum_{i=1}^{6}\sum_{j=1}^{6}A_1(i,j)a_1(F^1_{i},F^1_{j}; K_5)\]
\[= (10/17) - 1\cdot A_1(6,6) = 5/34.\]
For a slightly more involved example, let $J=K_{1,4}$. Then one can check that 
\[a_1(F^1_{3},F^1_{3}; K_{1,4})=1/5,\]
\[a_1(F^1_{1},F^1_{5}; K_{1,4})=a_1(F^1_{5},F^1_{1}; K_{1,4})=1/5\]
and $a_1(F^1_{i},F^1_{j}; K_{1,4})=0$ for all other $i$ and $j$. Also, $t_{\inj}(K_3,K_{1,4})=t_{\inj}(C_5,\overline{K_{1,4}}) = 0$. Therefore, 
\[(10/17)\cdot t_{\inj}(K_3,K_{1,4}) + (24/17)\cdot t_{\inj}(C_5,\overline{K_{1,4}})-\sum_{i=1}^{6}\sum_{j=1}^{6}A_1(i,j)\cdot a_1(F^1_{i},F^1_{j}; K_{1,4})\]
\[= -(1/5)\cdot A_1(3,3)-(1/5)\cdot A_1(1,5)-(1/5)\cdot A_1(5,1) = 3/34.\]
\end{proof}

Next, we show that $c_1(C_5,B)\geq1/16$ which, by Theorem~\ref{th:commonPair} and the fact that $e(C_5)=e(B)$, is equivalent to the statement that $(C_5,B)$ is $(1/2,1/2)$-common. 

\begin{proof}[Proof of Theorem~\ref{th:C5,B}]
The upper bound follows from Proposition~\ref{prop:C5,Bupper}. To prove the lower bound, we apply Lemma~\ref{lem:flag} with $\ell=5$, $m=4$, $r_1=r_2=r_3=r_4=3$ and $k_1=k_2=k_3=k_4=4$ and $t_1=t_2=t_3=t_4=8$. We define graphs $F^q_{i}$ for $1\leq q\leq 4$ and $1\leq i\leq 8$ as in the pictures below, where the eight graphs corresponding to $q=1$ are listed first, followed by the eight graphs for $q=2$, and so on. In these depictions, the square vertices are labelled $1,2,3$ from left to right and the round vertex is labelled $4$. 
\[\ZeroZero \quad \ZeroThree \quad \ZeroTwo \quad \ZeroSix \quad \ZeroOne \quad \ZeroFive \quad \ZeroFour \quad \ZeroSeven.\]
\[\OneZeroZero \quad \OneZeroThree \quad \OneZeroTwo \quad \OneZeroSix \quad \OneZeroOne \quad \OneZeroFive \quad \OneZeroFour \quad \OneZeroSeven.\]
\[\TwoZeroZero \quad \TwoZeroThree \quad \TwoZeroTwo \quad \TwoZeroSix \quad \TwoZeroOne \quad \TwoZeroFive \quad \TwoZeroFour \quad \TwoZeroSeven.\]
\[\ThreeZeroZero \quad \ThreeZeroThree \quad \ThreeZeroTwo \quad \ThreeZeroSix \quad \ThreeZeroOne \quad \ThreeZeroFive \quad \ThreeZeroFour \quad \ThreeZeroSeven.\]
Consider the following four $8\times 8$ matrices:
\[A_1:=\frac{1}{192}\begin{bmatrix}
180 &60 &60 &-60 &60 &-60 &-60 &-180\\
60 &64 &-2 &-5 &-2 &-5 &-50 &-60\\
60 &-2 &64 &-5 &-2 &-50 &-5 &-60\\
-60 &-5 &-5 &100 &-50 &-20 &-20 &60\\
60 &-2 &-2 &-50 &64 &-5 &-5 &-60\\
-60 &-5 &-50 &-20 &-5 &100 &-20 &60\\
-60 &-50 &-5 &-20 &-5 &-20 &100 &60\\
-180 &-60 &-60 &60 &-60 &60 &60 &180 \end{bmatrix}\]
\[A_2:=\frac{1}{192}\begin{bmatrix}
300 &150 &75 &-75 &75 &-75 &-150 &-300\\
150 &192 &-21 &0 &-21 &0 &-150 &-150\\
75 &-21 &96 &-15 &0 &-60 &0 &-75\\
-75 &0 &-15 &120 &-60 &30 &-75 &75\\
75 &-21 &0 &-60 &96 &-15 &0 &-75\\
-75 &0 &-60 &30 &-15 &120 &-75 &75\\
-150 &-150 &0 &-75 &0 &-75 &300 &150\\
-300 &-150 &-75 &75 &-75 &75 &150 &300 \end{bmatrix}\]
\[A_3:=\frac{1}{192}\begin{bmatrix}
300 &30 &30 &-240 &240 &-30 &-30 &-300\\
30 &66 &-6 &30 &-30 &-30 &-30 &-30\\
30 &-6 &66 &30 &-30 &-30 &-30 &-30\\
-240 &30 &30 &300 &-300 &-30 &-30 &240\\
240 &-30 &-30 &-300 &300 &30 &30 &-240\\
-30 &-30 &-30 &-30 &30 &90 &-30 &30\\
-30 &-30 &-30 &-30 &30 &-30 &90 &30\\
-300 &-30 &-30 &240 &-240 &30 &30 &300
\end{bmatrix}\]
\[A_4:=\frac{1}{192}\begin{bmatrix}
180 &60 &60 &-60 &60 &-60 &-60 &-180\\
60 &60 &0 &0 &0 &0 &-60 &-60\\
60 &0 &60 &0 &0 &-60 &0 &-60\\
-60 &0 &0 &60 &-60 &0 &0 &60\\
60 &0 &0 &-60 &60 &0 &0 &-60\\
-60 &0 &-60 &0 &0 &60 &0 &60\\
-60 &-60 &0 &0 &0 &0 &60 &60\\
-180 &-60 &-60 &60 &-60 &60 &60 &180
\end{bmatrix}\]
The matrices $A_1,A_2,A_3$ and $A_4$ are PSD. For each graph $J$ of order five, we have
\begin{equation}t_{\inj}(C_5,J) + t_{\inj}(B,\overline{J})-\sum_{q=1}^4\sum_{i=1}^{8}\sum_{j=1}^{8}A_q(i,j)a_{r_q}(F^q_i,F^q_j;J)=1/16.\end{equation}
The full list of calculations required to verify this for all graphs $J$ on five vertices has been included in an ancillary file with the arXiv preprint of the paper: \url{https://arxiv.org/src/2306.17388/anc/offDiagonalAppendices.pdf}. 
\end{proof}

Finally, we prove Theorem~\ref{th:D,M}. This proof follows the same principles of the other two in this section, but the details are much more complicated.

\begin{proof}[Proof of Theorem~\ref{th:D,M}]
The upper bound was proven in Proposition~\ref{prop:D,Mupper}. We prove the lower bound $c_{5/6}(D,M)\geq 1/36$ by applying Lemma~\ref{lem:flag} with $\ell=6$, $m=5$, $r_1=3$, $r_2=2$, $r_3=r_4=r_5=4$, $k_1=k_2=4$, $k_3=k_4=k_5=5$, $t_1=8$, $t_2=20$ and $t_3=t_4=t_5=16$. Let $F^1_1,\dots,F^1_8$ be as in the proof of Theorem~\ref{th:C5,B}. Define $F^2_1,\dots,F^2_{20}$ to be the following graphs where the square vertices are labelled $1$ and $2$ from left to right and the round vertices are labelled $3$ and $4$ from left to right:
\[\CTwoOne\quad\CTwoTwo\quad\CTwoThree\quad\CTwoFour\quad\CTwoFive\quad\CTwoSix\quad\CTwoSeven\quad\CTwoEight\quad\CTwoNine\quad\CTwoTen\]
\[\CTwoEleven\quad\CTwoTwelve\quad\CTwoThirteen\quad\CTwoFourteen\quad\CTwoFifteen\quad\CTwoSixteen\quad\CTwoSeventeen\quad\CTwoEighteen\quad\CTwoNineteen\quad\CTwoTwenty\]
Next, we define $F^q_i$ for $3\leq q\leq 5$ and $1\leq i\leq 16$ as follows, where the graphs $F^3_1,\dots,F^3_{16}$ are listed first, followed by $F^4_1,\dots,F^4_{16}$ and then $F^5_1,\dots,F^5_{16}$. Square vertices are labelled $1,2,3$ and $4$ from left to right and round vertices are labelled $5$.
\[\HOneOne\quad\HOneTwo\quad\HOneThree\quad\HOneFour\quad\HOneFive\quad\HOneSix\quad\HOneSeven\quad\HOneEight\]
\[\HOneNine\quad\HOneTen\quad\HOneEleven\quad\HOneTwelve\quad\HOneThirteen\quad\HOneFourteen\quad\HOneFifteen\quad\HOneSixteen\]
\[\LOneOne\quad\LOneTwo\quad\LOneThree\quad\LOneFour\quad\LOneFive\quad\LOneSix\quad\LOneSeven\quad\LOneEight\]
\[\LOneNine\quad\LOneTen\quad\LOneEleven\quad\LOneTwelve\quad\LOneThirteen\quad\LOneFourteen\quad\LOneFifteen\quad\LOneSixteen\]
\[\MOneOne\quad\MOneTwo\quad\MOneThree\quad\MOneFour\quad\MOneFive\quad\MOneSix\quad\MOneSeven\quad\MOneEight\]
\[\MOneNine\quad\MOneTen\quad\MOneEleven\quad\MOneTwelve\quad\MOneThirteen\quad\MOneFourteen\quad\MOneFifteen\quad\MOneSixteen\]
Let $A_1,\dots,A_5$ be $\frac{1}{619173642240}$ times the following five matrices, respectively. 
\[\scalemath{0.67}{\begin{bmatrix}
287949980052 & 95985403560 & 95982308280 & -95982268212 & 95982268212 & -95982308280 & -95985403560 & -287949980052 \\
95985403560 & 41365839780 & 27309810060 & -27309753720 & 27309753720 & -27309810060 & -41365839780 & -95985403560 \\
95982308280 & 27309810060 & 41363696376 & -27308801844 & 27308801844 & -41363696376 & -27309810060 & -95982308280 \\
-95982268212 & -27309753720 & -27308801844 & 41363712648 & -41363712648 & 27308801844 & 27309753720 & 95982268212 \\
95982268212 & 27309753720 & 27308801844 & -41363712648 & 41363712648 & -27308801844 & -27309753720 & -95982268212 \\
-95982308280 & -27309810060 & -41363696376 & 27308801844 & -27308801844 & 41363696376 & 27309810060 & 95982308280 \\
-95985403560 & -41365839780 & -27309810060 & 27309753720 & -27309753720 & 27309810060 & 41365839780 & 95985403560 \\
-287949980052 & -95985403560 & -95982308280 & 95982268212 & -95982268212 & 95982308280 & 95985403560 & 287949980052
\end{bmatrix}}\]

\[\scalemath{0.255}{\begin{bmatrix}
1131265129284 & 184507193658 & 99130531245 & -642119961108 & 184457538846 & 99241198095 & -877057679628 & 229364308632 & 229367515446 & 138201906510 & 405034352178 & -280447484154 & 9410203173 & -119657863368 & -280587239814 & 9476709726 & -978369617385 & -85355623230 & -85312177500 & -569848526100 \\
184507193658 & 517178073672 & -38045911104 & -193027147728 & -187544056548 & 172246863402 & -147784455936 & -46005730122 & -124561640748 & -49217217192 & 130760723514 & 468520023192 & -535011098574 & 1282469256 & -52069329372 & 286461450888 & -122425371774 & -544223295768 & -9032118246 & -217953986292 \\
99130531245 & -38045911104 & 509547150576 & -8012447856 & 172207608768 & -304245560808 & -77918769864 & 106745178789 & 40554565704 & 33461424432 & 90874755126 & -449583420696 & 26340942330 & -96780707496 & 478465306482 & -255987934338 & -48391702560 & -141396072336 & -341050715676 & -152078958144 \\
-642119961108 & -193027147728 & -8012447856 & 730911902940 & -193031372484 & -8033329440 & 504494186436 & 161668494864 & 161657949060 & 168683550408 & -238284890208 & -178981768908 & -81153983268 & -48618896976 & -178937409420 & -81131719932 & 485571419928 & 16929608148 & 16954460316 & 347291255268 \\
184457538846 & -187544056548 & 172207608768 & -193031372484 & 517163384592 & -38054242404 & -147746201148 & -124612095600 & -46061765070 & -49271049468 & 130733496720 & -52027081596 & 286469790696 & 1312357032 & 468542054904 & -534965701500 & -122374809936 & -8992185840 & -544149821184 & -217912121172 \\
99241198095 & 172246863402 & -304245560808 & -8033329440 & -38054242404 & 509647225212 & -78004099872 & 40640002863 & 106818602472 & 33495756876 & 90928468728 & 478441313688 & -256066489947 & -96848811972 & -449689633956 & 26363039976 & -48474761247 & -341164199484 & -141422678190 & -152159019696 \\
-877057679628 & -147784455936 & -77918769864 & 504494186436 & -147746201148 & -78004099872 & 681868832868 & -172456693596 & -172459140876 & -104851822896 & -316509850836 & 206090698680 & -3881460060 & 91218294720 & 206198410104 & -3932796564 & 757507278456 & 75013487640 & 74979914436 & 446376891816 \\
229364308632 & -46005730122 & 106745178789 & 161668494864 & -124612095600 & 40640002863 & -172456693596 & 514298685204 & 345303775008 & 409120996824 & 84685817844 & -561075155586 & -244292787216 & -169972253604 & -230540400180 & 106131742668 & -320874582276 & -253705886640 & 82979850276 & -113882137380 \\
229367515446 & -124561640748 & 40554565704 & 161657949060 & -46061765070 & 106818602472 & -172459140876 & 345303775008 & 514280398968 & 409106932308 & 84682175112 & -230460082992 & 106054979292 & -169967178144 & -561159130698 & -244191159432 & -320874268968 & 82932372936 & -253613845206 & -113874893496 \\
138201906510 & -49217217192 & 33461424432 & 168683550408 & -49271049468 & 33495756876 & -104851822896 & 409120996824 & 409106932308 & 694356497316 & 91261680372 & -209490897780 & -171865442628 & -57965871240 & -209527242804 & -171778573692 & -346400403876 & -319480211178 & -319412774952 & -144951014160 \\
405034352178 & 130760723514 & 90874755126 & -238284890208 & 130733496720 & 90928468728 & -316509850836 & 84685817844 & 84682175112 & 91261680372 & 205100989704 & 17728507584 & -105524997264 & -64510476246 & 17673401664 & -105476885208 & -345771885420 & -259708898700 & -259670841012 & -315521227908 \\
-280447484154 & 468520023192 & -449583420696 & -178981768908 & -52027081596 & 478441313688 & 206090698680 & -561075155586 & -230460082992 & -209490897780 & 17728507584 & 1768902943788 & -136835949348 & 154997093160 & -422079156648 & -205803441792 & 308711686428 & -213123466872 & -607375126188 & -88637001516 \\
9410203173 & -535011098574 & 26340942330 & -81153983268 & 286469790696 & -256066489947 & -3881460060 & -244292787216 & 106054979292 & -171865442628 & -105524997264 & -136835949348 & 1064739385404 & 54667527588 & -205754248548 & -515477856312 & 19967171976 & 1120038735348 & -191114839908 & 198617708124 \\
-119657863368 & 1282469256 & -96780707496 & -48618896976 & 1312357032 & -96848811972 & 91218294720 & -169972253604 & -169967178144 & -57965871240 & -64510476246 & 154997093160 & 54667527588 & 148548463164 & 155059867512 & 54609269148 & 82709822196 & 98417695680 & 98369130672 & 97306676568 \\
-280587239814 & -52069329372 & 478465306482 & -178937409420 & 468542054904 & -449689633956 & 206198410104 & -230540400180 & -561159130698 & -209527242804 & 17673401664 & -422079156648 & -205754248548 & 155059867512 & 1769070442068 & -136867009608 & 308821998060 & -607310478144 & -213109469910 & -88559088264 \\
9476709726 & 286461450888 & -255987934338 & -81131719932 & -534965701500 & 26363039976 & -3932796564 & 106131742668 & -244191159432 & -171778573692 & -105476885208 & -205803441792 & -515477856312 & 54609269148 & -136867009608 & 1064615955012 & 19900919484 & -191183574888 & 1119859135020 & 198540260028 \\
-978369617385 & -122425371774 & -48391702560 & 485571419928 & -122374809936 & -48474761247 & 757507278456 & -320874582276 & -320874268968 & -346400403876 & -345771885420 & 308711686428 & 19967171976 & 82709822196 & 308821998060 & 19900919484 & 1003963314744 & 53447774904 & 53404693524 & 484204319316 \\
-85355623230 & -544223295768 & -141396072336 & 16929608148 & -8992185840 & -341164199484 & 75013487640 & -253705886640 & 82932372936 & -319480211178 & -259708898700 & -213123466872 & 1120038735348 & 98417695680 & -607310478144 & -191183574888 & 53447774904 & 1657504562004 & 444949397640 & 473910953040 \\
-85312177500 & -9032118246 & -341050715676 & 16954460316 & -544149821184 & -141422678190 & 74979914436 & 82979850276 & -253613845206 & -319412774952 & -259670841012 & -607375126188 & -191114839908 & 98369130672 & -213109469910 & 1119859135020 & 53404693524 & 444949397640 & 1657285425900 & 473848176420 \\
-569848526100 & -217953986292 & -152078958144 & 347291255268 & -217912121172 & -152159019696 & 446376891816 & -113882137380 & -113874893496 & -144951014160 & -315521227908 & -88637001516 & 198617708124 & 97306676568 & -88559088264 & 198540260028 & 484204319316 & 473910953040 & 473848176420 & 497046744252
\end{bmatrix}}\]

\[\scalemath{0.345}{\begin{bmatrix}
417220001388 & 208609957332 & 208609908264 & -135792 & 208610379024 & 334968 & 285900 & -208609758156 & 208609758156 & -285900 & -334968 & -208610379024 & 135792 & -208609908264 & -208609957332 & -417220001388 \\
208609957332 & 105053290920 & 104055488244 & 498821832 & 104055723540 & 499057128 & -498745548 & -104055411960 & 104055411960 & 498745548 & -499057128 & -104055723540 & -498821832 & -104055488244 & -105053290920 & -208609957332 \\
208609908264 & 104055488244 & 105053243040 & 498823020 & 104055697368 & -498722652 & 499032144 & -104055387876 & 104055387876 & -499032144 & 498722652 & -104055697368 & -498823020 & -105053243040 & -104055488244 & -208609908264 \\
-135792 & 498821832 & 498823020 & 997780644 & -498958116 & -492 & 696 & 498958320 & -498958320 & -696 & 492 & 498958116 & -997780644 & -498823020 & -498821832 & 135792 \\
208610379024 & 104055723540 & 104055697368 & -498958116 & 105053714172 & 499058688 & 499032516 & -104055622968 & 104055622968 & -499032516 & -499058688 & -105053714172 & 498958116 & -104055697368 & -104055723540 & -208610379024 \\
334968 & 499057128 & -498722652 & -492 & 499058688 & 997780848 & 1068 & 498723228 & -498723228 & -1068 & -997780848 & -499058688 & 492 & 498722652 & -499057128 & -334968 \\
285900 & -498745548 & 499032144 & 696 & 499032516 & 1068 & 997778760 & 498747312 & -498747312 & -997778760 & -1068 & -499032516 & -696 & -499032144 & 498745548 & -285900 \\
-208609758156 & -104055411960 & -104055387876 & 498958320 & -104055622968 & 498723228 & 498747312 & 105053093508 & -105053093508 & -498747312 & -498723228 & 104055622968 & -498958320 & 104055387876 & 104055411960 & 208609758156 \\
208609758156 & 104055411960 & 104055387876 & -498958320 & 104055622968 & -498723228 & -498747312 & -105053093508 & 105053093508 & 498747312 & 498723228 & -104055622968 & 498958320 & -104055387876 & -104055411960 & -208609758156 \\
-285900 & 498745548 & -499032144 & -696 & -499032516 & -1068 & -997778760 & -498747312 & 498747312 & 997778760 & 1068 & 499032516 & 696 & 499032144 & -498745548 & 285900 \\
-334968 & -499057128 & 498722652 & 492 & -499058688 & -997780848 & -1068 & -498723228 & 498723228 & 1068 & 997780848 & 499058688 & -492 & -498722652 & 499057128 & 334968 \\
-208610379024 & -104055723540 & -104055697368 & 498958116 & -105053714172 & -499058688 & -499032516 & 104055622968 & -104055622968 & 499032516 & 499058688 & 105053714172 & -498958116 & 104055697368 & 104055723540 & 208610379024 \\
135792 & -498821832 & -498823020 & -997780644 & 498958116 & 492 & -696 & -498958320 & 498958320 & 696 & -492 & -498958116 & 997780644 & 498823020 & 498821832 & -135792 \\
-208609908264 & -104055488244 & -105053243040 & -498823020 & -104055697368 & 498722652 & -499032144 & 104055387876 & -104055387876 & 499032144 & -498722652 & 104055697368 & 498823020 & 105053243040 & 104055488244 & 208609908264 \\
-208609957332 & -105053290920 & -104055488244 & -498821832 & -104055723540 & -499057128 & 498745548 & 104055411960 & -104055411960 & -498745548 & 499057128 & 104055723540 & 498821832 & 104055488244 & 105053290920 & 208609957332 \\
-417220001388 & -208609957332 & -208609908264 & 135792 & -208610379024 & -334968 & -285900 & 208609758156 & -208609758156 & 285900 & 334968 & 208610379024 & -135792 & 208609908264 & 208609957332 & 417220001388
\end{bmatrix}}\]

\[\scalemath{0.305}{\begin{bmatrix}
2774557681848 & 1350104056350 & 1347143775276 & -77309850222 & 1455606240186 & 31152614688 & -539210101032 & -1512789674616 & 1453634987136 & -540165862188 & 26221080564 & -1514819630808 & 195677928324 & -1263927084048 & -1266023285172 & -2889305072352 \\
1350104056350 & 3041948223276 & 200280072666 & 1892124239592 & 109878657486 & 1801722824412 & -751193597052 & -158738863536 & -518023820004 & 226107539712 & -1667847803688 & -402458834628 & -922781378910 & -220522167720 & -2058253894728 & -1617854097798 \\
1347143775276 & 200280072666 & 3038362906344 & 1891499203734 & -517068352590 & -1663932055200 & 229477736376 & -397645752078 & 106116469188 & -752406189480 & 1797335600256 & -159579697536 & -923321870718 & -2055325892940 & -220435480152 & -1613950865388 \\
-77309850222 & 1892124239592 & 1891499203734 & 3860933293548 & -1862795935290 & 106638154524 & 17494240356 & 956405059002 & -1865542337952 & 13867212420 & 103266716004 & 952781098644 & -2041781177952 & -1011920976612 & -1012666089708 & -342499890834 \\
1455606240186 & 109878657486 & -517068352590 & -1862795935290 & 2158051475952 & 812323893252 & 144620362548 & -722650875984 & 1170426695436 & -884554498872 & -802247897340 & -1909964208996 & 1166932848882 & 94226760000 & -450470714544 & -1475758232010 \\
31152614688 & 1801722824412 & -1663932055200 & 106638154524 & 812323893252 & 2582894102976 & -67363133472 & 631399935096 & -801232111704 & -118281096972 & -2496316781592 & -797603412816 & 48473541648 & 1137631676328 & -1242701324100 & -204307257456 \\
-539210101032 & -751193597052 & 229477736376 & 17494240356 & 144620362548 & -67363133472 & 1481961192156 & 1340469291600 & -884718676980 & -753146813628 & -116030839572 & -264668752908 & -353196464616 & -12636718560 & 417610645776 & 905998047120 \\
-1512789674616 & -158738863536 & -397645752078 & 956405059002 & -722650875984 & 631399935096 & 1340469291600 & 2138829558516 & -1909443198330 & -265547604996 & -794299275792 & 453538155348 & -916678698552 & 540037841760 & 301872054960 & 1838588356008 \\
1453634987136 & -518023820004 & 106116469188 & -1865542337952 & 1170426695436 & -801232111704 & -884718676980 & -1909443198330 & 2158253725932 & 146593024668 & 810735207984 & -721224577764 & 1167529692678 & -448001784000 & 94984738452 & -1472641179696 \\
-540165862188 & 226107539712 & -752406189480 & 13867212420 & -884554498872 & -118281096972 & -753146813628 & -265547604996 & 146593024668 & 1483209554436 & -65647302624 & 1341610437060 & -351968570280 & 418216318956 & -10477990008 & 908022661380 \\
26221080564 & -1667847803688 & 1797335600256 & 103266716004 & -802247897340 & -2496316781592 & -116030839572 & -794299275792 & 810735207984 & -65647302624 & 2581849727676 & 634015355508 & 48529893636 & -1239400592892 & 1140572543472 & -197286972732 \\
-1514819630808 & -402458834628 & -159579697536 & 952781098644 & -1909964208996 & -797603412816 & -264668752908 & 453538155348 & -721224577764 & 1341610437060 & 634015355508 & 2140629493680 & -915759960912 & 302390549568 & 543254001516 & 1841995745424 \\
195677928324 & -922781378910 & -923321870718 & -2041781177952 & 1166932848882 & 48473541648 & -353196464616 & -916678698552 & 1167529692678 & -351968570280 & 48529893636 & -915759960912 & 1539757295856 & 523267609284 & 523026009324 & -389684798892 \\
-1263927084048 & -220522167720 & -2055325892940 & -1011920976612 & 94226760000 & 1137631676328 & -12636718560 & 540037841760 & -448001784000 & 418216318956 & -1239400592892 & 302390549568 & 523267609284 & 1680142768740 & 227521974960 & 1467464917176 \\
-1266023285172 & -2058253894728 & -220435480152 & -1012666089708 & -450470714544 & -1242701324100 & 417610645776 & 301872054960 & 94984738452 & -10477990008 & 1140572543472 & 543254001516 & 523026009324 & 227521974960 & 1682439809724 & 1470280846428 \\
-2889305072352 & -1617854097798 & -1613950865388 & -342499890834 & -1475758232010 & -204307257456 & 905998047120 & 1838588356008 & -1472641179696 & 908022661380 & -197286972732 & 1841995745424 & -389684798892 & 1467464917176 & 1470280846428 & 3692493532728
\end{bmatrix}}\]

\[\scalemath{0.305}{\begin{bmatrix}
1522897677684 & -140544997830 & 1562240672190 & -101202003324 & 1559711933154 & -103730742360 & 1599054927660 & -64387747854 & 121761443106 & -1541681232408 & -548138809584 & -1589386013604 & -549137385720 & -1589940286992 & -994142382192 & -1637645068188 \\
-140544997830 & 939362217432 & 229502997432 & 1309410212694 & 231074579748 & 1310981795010 & 601122575010 & 1681029790272 & -1336739112072 & -256831896810 & -1131160690044 & -402509518680 & -1130749230816 & -402358736700 & -1092411838548 & -548036358570 \\
1562240672190 & 229502997432 & 2886440959368 & 1553703284610 & 928400906790 & -404336767968 & 2252601193968 & 919863519210 & -685590101262 & -2018327776020 & -834096075012 & -1894519222884 & -1433289161268 & -2154596061222 & -1308014133264 & -2030787508086 \\
-101202003324 & 1309410212694 & 1553703284610 & 2964315500628 & -400236446616 & 1010375769402 & 1254668841318 & 2665281057336 & -2144090656440 & -733478440422 & -1417117955472 & -707642727960 & -2014901006364 & -967014510930 & -1406283589620 & -941178798468 \\
1559711933154 & 231074579748 & 928400906790 & -400236446616 & 2882680869588 & 1554043516182 & 2251369843224 & 922732489818 & -688039544322 & -2016676897728 & -1433229945444 & -2152331014596 & -835916015124 & -1893443707278 & -1307861888400 & -2029097824146 \\
-103730742360 & 1310981795010 & -404336767968 & 1010375769402 & 1554043516182 & 2968756053552 & 1253437490574 & 2668150027944 & -2146540099500 & -731827562130 & -2016251825904 & -965454519672 & -1417527860220 & -705862156986 & -1406131344756 & -939489114528 \\
1599054927660 & 601122575010 & 2252601193968 & 1254668841318 & 2251369843224 & 1253437490574 & 2904916109532 & 1906983756882 & -1495391088690 & -2493323441340 & -1719187210872 & -2457464223876 & -1720067790672 & -2458099481508 & -1621733639472 & -2422240264044 \\
-64387747854 & 1681029790272 & 919863519210 & 2665281057336 & 922732489818 & 2668150027944 & 1906983756882 & 3652401295008 & -2953891643868 & -1208474105742 & -2302209091332 & -1270587728952 & -2301679635768 & -1270517931216 & -1720003095828 & -1332631554426 \\
121761443106 & -1336739112072 & -685590101262 & -2144090656440 & -688039544322 & -2146540099500 & -1495391088690 & -2953891643868 & 2427492260196 & 968991705018 & 1800875286828 & 950038935624 & 1800434816964 & 949990093632 & 1271589408180 & 931037324238 \\
-1541681232408 & -256831896810 & -2018327776020 & -733478440422 & -2016676897728 & -731827562130 & -2493323441340 & -1208474105742 & 968991705018 & 2253841040616 & 1217853406368 & 2136915430548 & 1218822971868 & 2137571643924 & 1173319951824 & 2020646033856 \\
-548138809584 & -1131160690044 & -834096075012 & -1417117955472 & -1433229945444 & -2016251825904 & -1719187210872 & -2302209091332 & 1800875286828 & 1217853406368 & 2189701794024 & 1669799126304 & 1402395946596 & 1098860698656 & 1751181244200 & 1550806418592 \\
-1589386013604 & -402509518680 & -1894519222884 & -707642727960 & -2152331014596 & -965454519672 & -2457464223876 & -1270587728952 & 950038935624 & 2136915430548 & 1669799126304 & 2432050701252 & 1099649380716 & 1935348329556 & 1532069128656 & 2230483600260 \\
-549137385720 & -1130749230816 & -1433289161268 & -2014901006364 & -835916015124 & -1417527860220 & -1720067790672 & -2301679635768 & 1800434816964 & 1218822971868 & 1402395946596 & 1099649380716 & 2189652959520 & 1670800614480 & 1751368886820 & 1551627023328 \\
-1589940286992 & -402358736700 & -2154596061222 & -967014510930 & -1893443707278 & -705862156986 & -2458099481508 & -1270517931216 & 949990093632 & 2137571643924 & 1098860698656 & 1935348329556 & 1670800614480 & 2433219276528 & 1532220580656 & 2230995962160 \\
-994142382192 & -1092411838548 & -1308014133264 & -1406283589620 & -1307861888400 & -1406131344756 & -1621733639472 & -1720003095828 & 1271589408180 & 1173319951824 & 1751181244200 & 1532069128656 & 1751368886820 & 1532220580656 & 2647565911944 & 1890969757488 \\
-1637645068188 & -548036358570 & -2030787508086 & -941178798468 & -2029097824146 & -939489114528 & -2422240264044 & -1332631554426 & 931037324238 & 2020646033856 & 1550806418592 & 2230483600260 & 1551627023328 & 2230995962160 & 1890969757488 & 2440833528564
\end{bmatrix}}\]
The matrices $A_1,A_2,A_3,A_4$ and $A_5$ are PSD and, for each graph $J$ of order six (of which there are 156, up to isomorphism), we have
\begin{equation}\label{eq:DMineq}(5/6)t_{\inj}(D,J) + (7/6)t_{\inj}(M,\overline{J})-\sum_{q=1}^5\sum_{i=1}^{t_q}\sum_{j=1}^{t_q}A_q(i,j)a_{r_q}(F^q_i,F^q_j;J)\geq 1/36.\end{equation}
The full list of calculations required to verify this for all graphs $J$ on six vertices has been included in an appendix in an ancillary file for the arXiv version of the paper: \url{https://arxiv.org/src/2306.17388/anc/offDiagonalAppendices.pdf}. 
\end{proof}

\section{Concluding Remarks}
\label{sec:concl}

We conclude the paper by proposing some open problems related to this work. It seems to us that it would be particularly interesting to determine or estimate $c(K_3,K_4)$, as this may provide a natural ``bridge'' between the classical result of Goodman~\cite{Goodman59} that $c(K_3)=1/4$ and the longstanding open problem of determining $c(K_4)$. 

\begin{prob}
Determine $c(K_3,K_4)$. 
\end{prob}

A flag algebra calculation on $7$-vertex graphs yields a bound of $c(K_3,K_4)> 0.052634$ and suggests that, if this is tight, then the optimal value of $\lambda$ is approximately $0.52103$. However, we caution that these values are from an approximate solution to a semidefinite program that was found by computer, and so they may contain rounding errors. It would also be interesting to compute the balanced Ramsey multiplicity of other pairs of small graphs; in particular, the following is open.

\begin{prob}
Determine $c(K_3,C_4)$. 
\end{prob}

A flag algebra calculation in the universe of $7$-vertex graphs suggests that $c(K_3,C_4)> 0.075159$ with the optimal $\lambda$ being approximately $0.37367$; once again, these values have been copied directly from the output of a computer program and so they should not necessarily be trusted. 

One of the great triumphs in Ramsey theory during the 20th century was the determination of the asymptotics of $r(K_3,K_k)$ as $k\to\infty$. The upper bound $r(K_3,K_k)=O(k^2/\log(k))$ was first proven by Ajtai, Koml\'os and Szemer\'edi~\cite{AjtaiKomlosSzemeredi80} and subsequently improved by Shearer~\cite{Shearer83} and the lower bound was proven by Kim~\cite{Kim95}. A new proof of Kim's~\cite{Kim95} result was obtained by Bohman~\cite{Bohman09} by analyzing the triangle-free process; the best known lower bound on $r(K_3,K_k)$ was obtained in the seminal works of Fiz Pontiveros, Griffiths and Morris~\cite{FizPontiverosGriffithsMorris20} and Bohman and Keevash~\cite{BohmanKeevash21} via a far more detailed analysis of this process. We propose a Ramsey multiplicity analogue of this problem.

\begin{prob}
Prove sharp bounds on the asymptotics of $c(K_3,K_k)$ as $k\to\infty$. 
\end{prob}

Mattheus and Verstra\"ete~\cite{MattheusVerstraete24} determined the asymptotics of $r(K_4,K_k)$ up to a factor of $O(\log^2(k))$. Apart from this breakthrough and the aforementioned results on $r(K_3,K_k)$, the growth rate of $r(K_s,K_k)$ for fixed $s$ and $k\to\infty$ is not well understood. It would also be interesting to analyze the asymptotics of the analogous balanced Ramsey multiplicity constant $c(K_s,K_k)$ for fixed $s\geq4$ as $k$ tends to infinity. 

We are also interested in the range of possible values for $c(H_1,H_2)$. While there are probably many pairs of graphs for which $c(H_1,H_2)$ is irrational, it is hard to imagine that it could be transcendental. 

\begin{conj}
There do not exist graphs $H_1$ and $H_2$ such that $c(H_1,H_2)$ is transcendental.
\end{conj}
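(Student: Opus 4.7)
The plan is to leverage the dual formulation in Theorem~\ref{th:dual} to write $c(H_1,H_2)$ as the optimum of a finite-parameter polynomial optimization problem with rational coefficients, whose value is then forced to be a real algebraic number by the Tarski--Seidenberg theorem. I would begin by invoking Theorem~\ref{th:dual} and Corollary~\ref{cor:max} to obtain an optimal certificate $(W_1^\ast,W_2^\ast)$ at an optimal $\lambda^\ast\in[0,2]$; both $W_i^\ast$ minimize the functional $F_{\lambda^\ast}(W):=\lambda^\ast t(H_1,W)+(2-\lambda^\ast)t(H_2,1-W)$, and they satisfy the sign constraints \eqref{eq:rbigger} and \eqref{eq:bbigger}.

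The next step is a first-order variational analysis. The functional derivative of $W\mapsto t(H,W)$ at a point $(x,y)\in[0,1]^2$ is an integral of a product of $e(H)-1$ copies of $W$, summed over all edges $uv\in E(H)$ with pinning $x_u=x$ and $x_v=y$. The KKT conditions at a minimizer $W_i^\ast$ therefore read: the kernel $\phi_i^\ast:=\delta F_{\lambda^\ast}/\delta W$ must vanish on $\{0<W_i^\ast<1\}$, be nonnegative where $W_i^\ast=0$, and nonpositive where $W_i^\ast=1$. These are polynomial conditions in $W_i^\ast$ whose coefficients are themselves polynomials in $\lambda^\ast$ with rational coefficients.

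The crucial step would be a \emph{finite-complexity reduction}: showing that one may replace $(W_1^\ast,W_2^\ast)$ by a pair of graphons parametrized by a bounded number of real variables, say step functions with at most $N=N(v(H_1),v(H_2))$ steps. Under such a reduction, the KKT system together with the certificate inequalities \eqref{eq:ralpha}--\eqref{eq:bbigger} becomes a semi-algebraic system in finitely many real unknowns (the step values, the step sizes, the parameter $\lambda^\ast$, and Lagrange multipliers), with all coefficients in $\mathbb{Q}$. Tarski--Seidenberg then guarantees that the infimum of $\alpha$ over this system is a real algebraic number, which would prove the conjecture.

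The main obstacle is precisely this finite-complexity reduction. Extremal graphons for polynomial density functionals are known to exhibit intricate structure in some settings---e.g., there exist finitely forcible graphons that are provably not step functions---so no off-the-shelf argument produces a step-function optimizer here. A plausible fallback is to sidestep an explicit finite reduction and argue instead that $\lambda\mapsto c_\lambda(H_1,H_2)$ is piecewise real-analytic with algebraic pieces, so that its maximum (pinned down by the matching conditions on the left- and right-derivatives at $\lambda^\ast$ that are implicit in the proof of Theorem~\ref{th:dual}) is algebraic even when no finite-complexity optimizer exists. Establishing this level of regularity for the profile $\lambda\mapsto c_\lambda(H_1,H_2)$ appears to require substantially new ideas, and is likely the crux of the conjecture.
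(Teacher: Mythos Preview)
The statement you are attempting to prove is a \emph{conjecture} in the paper, not a theorem; the authors explicitly present it as an open problem in the concluding remarks, writing that ``it is hard to imagine that it could be transcendental'' but offering no proof. There is therefore no proof in the paper to compare your proposal against.

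Your proposal is honest about its own incompleteness: you identify the finite-complexity reduction as the crux and state outright that it ``appears to require substantially new ideas.'' That assessment is accurate. The Tarski--Seidenberg framework you describe would indeed finish the job \emph{if} one could reduce to a finite-dimensional semialgebraic problem, but as you note, extremal graphons for density functionals need not be step functions (and the known finitely-forcible-but-not-step examples are a genuine warning sign here). Your fallback---piecewise real-analyticity of $\lambda\mapsto c_\lambda(H_1,H_2)$ with algebraic pieces---is not obviously easier, since it would require understanding how the optimal graphon varies with $\lambda$, which again runs into the same infinite-dimensional obstruction. So what you have written is a reasonable sketch of \emph{why the conjecture is plausible} and \emph{where the difficulty lies}, but it is not a proof, and you correctly do not claim it to be one.
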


Classical (i.e. diagonal) Ramsey multiplicity has also been studied in the multicolour setting~\cite{FoxWigderson23,Cummings+13,JaggerStovicekThomason96,KralVolecWei25,Kral+22}. It may be interesting to investigate a generalization of the balanced Ramsey multiplicity constant to $r$-tuples of graphs $(H_1,\dots,H_r)$ in $r$-edge colourings of $K_n$ for large $n$ when $r\geq3$.


\begin{ack}
The authors would like to thank Joseph Hyde and Jae-baek Lee for several enlightening conversations on topics related to the subject of this paper. In particular, we thank Joseph for helpful comments on early drafts of this paper which improved the exposition and for an idea which helped us to discover the construction in Proposition~\ref{prop:D,Mupper}. The second author thanks Jan Volec for sharing some tips on how to ``round'' matrices to convert computer outputs into rigorous flag algebra proofs.
\end{ack}

\begin{ai}
In the first arXiv version of this paper, the matrices in the proof of Theorem~\ref{th:D,M} did not satisfy the required inequality \eqref{eq:DMineq} for all graphs $J$ on six vertices due to an error in our rounding procedure. We delayed submitting the paper until we could correct the proof. The corrected matrices in this version of the paper were found with assistance from ChatGPT 5.5 Pro. We provided ChatGPT with prompts that included the full floating point approximations of the matrices output by an SDP solver, a list of constraints that the ``rounded'' matrices must satisfy, and a description of the procedure that one typically follows to round the matrices when making a flag algebra proof rigorous. It then produced the matrices in the current draft. We verified ourselves that they are PSD and that they satisfy inequality \eqref{eq:DMineq} for every graph $J$ on six vertices, thereby certifying the proof. ChatGPT contributed only to these technical details of the proof and did not contribute substantive mathematical ideas. ChatGPT was also used for proofreading. The authors take full responsibility for the mathematical content and final wording of the paper.
\end{ai}

\bibliographystyle{plain}
\bibliography{rm}

\end{document}